\theoremstyle{definition}
\newtheorem{definition}{Definition}[section]
\newtheorem{theorem}{Theorem}
\newtheorem{proposition}[theorem]{Proposition}
\newtheorem*{remark}{Remark}
\newtheorem{example}{Example}[section]
\newtheorem{lemma}[theorem]{Lemma}
\newcommand{\indep}{\perp \!\!\! \perp}
\theoremstyle{plain}
\theoremstyle{remark}
\begin{document}

\begin{frontmatter}
\title{Nonparametric Regression in Dirichlet Spaces: A Random Obstacle Approach}
\runtitle{Dirichlet Space Regression}

\begin{aug}
\author[A]{\fnms{Prem}~\snm{Talwai} \thanks{[\textbf{Corresponding author indication should be put in the Acknowledgment section if necessary.}]}}
\and
\author[B]{\fnms{David}~\snm{Simchi-Levi}\orcid{0000-0000-0000-0000}}
\address[A]{Operations Research Center, MIT}

\address[B]{Institute for Data, Systems, and Society, MIT}
\end{aug}

\begin{abstract}
In this paper, we consider nonparametric estimation over general Dirichlet metric measure spaces. Unlike the more commonly studied reproducing kernel Hilbert space, whose elements may be defined pointwise, a Dirichlet space typically only contain equivalence classes, i.e. its elements are only unique almost everywhere. This lack of pointwise definition presents significant challenges in the context of nonparametric estimation, for example the classical ridge regression problem is ill-posed. In this paper, we develop a new technique for renormalizing the ridge loss by replacing pointwise evaluations with certain \textit{local means} around the boundaries of obstacles centered at each data point. The resulting renormalized empirical risk functional is well-posed and even admits a representer theorem in terms of certain equilibrium potentials, which are truncated versions of the associated Green function, cut-off at a data-driven threshold. We demonstrate that the renormalized ridge estimator is rate-optimal, and derive an adaptive upper bound on its convergence rate that highlights the interplay between the analytic, geometric, and probabilistic properties of the Dirichlet form. Our framework notably does not require the smoothness of the underlying space, and is applicable to both manifold and fractal settings. To the best of our knowledge, this is the first paper to obtain optimal, out-of-sample convergence guarantees in the framework of general metric measure Dirichlet spaces.  
\end{abstract}

\begin{keyword}[class=MSC]
\kwd[Primary ]{00X00}
\kwd{00X00}
\kwd[; secondary ]{00X00}
\end{keyword}

\begin{keyword}
\kwd{First keyword}
\kwd{second keyword}
\end{keyword}

\end{frontmatter}

\section{Introduction}

We consider the classical problem of nonparametric estimation where we observe i.i.d samples $\mathcal{D} \equiv \{(X_i, Y_i)\}_{i = 1}^n \subset \mathcal{X} \times \mathbb{R}$ from the model:
\begin{equation*}
    Y = f^{*}(X) + \epsilon
\end{equation*}
where $\epsilon$ is independent of $X$, with mean-zero and finite variance. Our goal is to estimate $f^{*}$ using only the sample dataset $\mathcal{D}$. \\

This estimation problem has a storied history in mathematical statistics \cite{wainwright2019high}, typically with the primary focus on relating the consistency of a sample estimator to the smoothness properties of $f^{*}$. Here, the regularity of $f^{*}$ is typically characterized by its membership in a function class $\mathcal{H}$, and one aims to design an estimator that optimizes an expected minimax risk over the entire function class. \\

While nonparametric estimation over \textit{supercritical} function classes, i.e. those that contain continuous functions, has been well-studied, \textit{subcritical} classes, where elements are only defined almost everywhere, have been rarely examined. Indeed, in arguably the most commonly studied supercritical setting, $\mathcal{H}$ is a reproducing kernel Hilbert space (RKHS), where pointwise evaluations of its elements may be represented by certain feature vectors in $\mathcal{H}$. This characterization, known as the reproducing property, serves as the cornerstone of modern kernel methods, and underscores their powerful analytic and computational tractability. \\

In this paper, we study nonparametric estimation over a family of subcritical function classes, known as Dirichlet spaces (defined formally in section \ref{Preliminaries}). Dirichlet spaces are characterized by their association with an intrinsic bilinear form known as a \textit{Dirichlet form}, which enjoys a Markovian property and serves as the critical bridge between the probabilistic and analytic characterizations of the associated Markovian semigroup \cite{fukushima2010dirichlet}. Over the past three decades, the theory of Dirichlet forms has emerged as a powerful, cohesive machinery for studying various stochastic models, particularly with non-smooth data, on fractals, and in infinite-dimensional settings \cite{albeverio2015dirichlet}. In recent years, Dirichlet forms have also experienced a renaissance in the machine learning community due to their tractable characterization of the intrinsic geometry of both point clouds and their continuum limits \cite{hoffmann2022spectral, calder2022improved, trillos2021geometric, cheng2022convergence}. Our general setting of Dirichlet metric measure spaces enables the unified analysis of a rich family of examples, including fractal spaces, domains of uniformly elliptic divergence forms on $\mathbb{R}^d$, first-order Sobolev spaces on Riemannian manifolds with Ricci curvature bounded from below, polynomial growth Lie groups and their homogeneous spaces, $\text{RCD}^{*}(K, N)$ spaces in the theory of optimal transport, Carnot-Caratheodory spaces and many more \cite{coulhon2012heat, coulhon2020gradient}. \\

Unlike an RKHS, whose elements may be defined pointwise, a Dirichlet space typically only contains \textit{equivalence classes}, i.e. its elements are only unique almost everywhere. Indeed, consider the canonical example of a Dirichlet space, the first-order Euclidean Sobolev space $\mathcal{H} = \mathbb{H}_{0}^1(\mathbb{R}^d)$. For $d \geq 3$, $\mathbb{H}_0^1(\mathbb{R}^d)$ does not enjoy a Sobolev embedding into the continuous functions $\mathcal{C}_0(\mathbb{R}^d)$. This presents a significant challenge in the design of sample estimators, as many classical approaches, such as ridge regression are ill-posed. Indeed, consider the thin-plate estimator, i.e. the solution to:
\begin{equation}
\label{eq: Thin-Plate}
    \text{arg}\min_{g \in \mathbb{H}^1_{0}} \frac{1}{n}\sum_{i = 1}^n (Y_i - g(X_i))^2 + \lambda\int \|\nabla g(x)\|^2 dx 
\end{equation}
When $d \geq 3$, for any $\delta > 0$, we can build a test function $g_{\delta} \in  \mathbb{H}^1_{0}$ using bump functions localized around $\{X_i\}_{i = 1}^n$, such that $g_{\delta}(X_i) = Y_i$ and:
\begin{equation*}
    \int \|\nabla g_{\delta}(x)\|^2 dx < \delta
\end{equation*}
In other words, $\{g_{\frac{1}{n}}\}_{n = 1}^{\infty}$ forms a minimizing sequence that achieves zero loss in \eqref{eq: Thin-Plate}. However, it may be readily seen that $g_{\frac{1}{n}} \to 0$ in $L^2$, which clearly is an inconsistent estimator of any nonzero mean function. \\

Due to the ill-posedness of \eqref{eq: Thin-Plate}, nonparametric techniques in subcritical Euclidean Sobolev spaces typically wholly discard a continuous analysis, and instead favor a discrete approach \cite{green2021minimax, green2023minimax, garcia2020maximum} that is grounded in the spectral analysis of a graph Laplacian constructed on the data points. While such approaches, including Laplacian smoothing \cite{green2021minimax} and PCR \cite{green2023minimax}, have gained popularity due to their tractable implementation, the resulting graph estimators \textit{are only defined on the data sample} and critically do not provide out-of-sample generalization. Moreover, the convergence analysis of these estimators relies strongly on the smooth/Euclidean structure of the underlying domain, and does not extend to the setting of general metric spaces, considered here. \\

In this paper, we develop a new approach that aims to directly renormalize the continuous-space ridge loss function:

\begin{equation}
\label{eq: Ridge Regression}
    \min_{g \in \mathcal{H}} \frac{1}{n}\sum_{i = 1}^n (Y_i - g(X_i))^2 + \lambda\mathcal{E}(g, g)
\end{equation}

in order to produce a well-posed objective. Here, $\mathcal{H}$ is an (extended) transient Dirichlet space and $\mathcal{E}(\cdot, \cdot)$ is the associated Dirichlet form. Our approach leads to a \textit{globally defined} estimator on $\mathcal{X}$, that enjoys optimal, adaptive convergence guarantees.  In the following section, we provide a brief overview of our approach which will later be elaborated in section \ref{Random Obstacle Regression}. 
\subsection{Random Obstacle Renormalization}
\label{Random Obstacle Preview}
To tackle the ill-posedness of \eqref{eq: Ridge Regression}, we consider a slightly relaxed problem. Namely, we replace the pointwise evaluations in \eqref{eq: Ridge Regression} by their ``local means'' on the boundaries of \textit{obstacles} surrounding the data points. We will see that these local means \textit{are} continuous functionals on $\mathcal{H}$, hence enabling a pointwise representation of the elements of $\mathcal{H}$. More specifically, we consider the obstacles:

\begin{equation}
\label{eq: Obstacle}
    \mathcal{O}_{i, n} = \{y: G(X_i, y) \geq \gamma_n\}
\end{equation}
where $G$ is the \textit{Green's function} for $\mathcal{H}$ on $L^2$ (see \eqref{eq: Green Definition} in section \ref{Preliminaries}) and $\gamma_n$ is a \textit{cutoff} constant depending on $n = |\mathcal{D}|$ (the number of data points). \\

In other words, we pick our obstacles as \textit{level sets} of the Green function $G(X_i, \cdot)$ with singularity at the data point $X_i$. On a high level, $G(\cdot, \cdot)$ plays a similar role to the reproducing kernel of an RKHS, however crucially $\mathbf{G(x, \cdot) \not \in \mathcal{H}}$ (actually $G(x, \cdot) \not \in L^2$; it is not even defined at $x$). \\

Instead for each obstacle, we consider its $\textit{equilibrium potential}$, i.e. the unique solution to the following variational problem:

\begin{equation}
\label{eq: Potential Definition}
e_{i, n} = \text{arg} \min_{g \in \mathcal{L}(\mathcal{O}_{i, n})} \mathcal{E}(g, g)
\end{equation}
where:
\begin{equation*}
    \mathcal{L}(\mathcal{O}_{i, n}) \equiv \{g \in \mathcal{H}: g \geq \mathbb{1}_{\mathcal{O}_{i, n}}\}
\end{equation*}
For each obstacle $\mathcal{O}_{i, n}$ there is a unique \textit{equilibrium measure} $\nu_{i, n}$ concentrated on $\partial \mathcal{O}_{i, n}$ such that for all $h \in \mathcal{H}$:

\begin{equation*}
    \mathcal{E}(h, e_{i, n}) = \int h(x)d\nu_{i, n}(x)
\end{equation*}

and $\nu_{i, n}(\partial \mathcal{O}_{i, n}) = \gamma_n^{-1} = \text{cap}(\mathcal{O}_{i, n})$ (the $\mathcal{E}$-capacity of $\mathcal{O}_{i, n}$, see section \ref{Capacity Preliminaries}). Hence, after normalization, we see:

\begin{equation}
\label{eq: Capacitary Mean}
    \mathcal{E}(h, \gamma_n e_{i, n}) = \frac{1}{\nu_{i, n}(\partial \mathcal{O}_{i, n})}\int h(x)d\nu_{i, n}(x)
\end{equation}
is our desired \textit{capacitary} mean. Let us denote the mean operator:
\begin{equation*}
    P_{i, n} h = \frac{1}{\nu_{i, n}(\partial \mathcal{O}_{i, n})}\int h(x)d\nu_{i, n}(x)
\end{equation*}
Then, we perturb \eqref{eq: Ridge Regression} to the following:

\begin{equation}
\label{eq: Capacitary Ridge Regression}
    \hat{f}_{D, \lambda} = \text{arg}\min_{g \in \mathcal{H}} \frac{1}{n}\sum_{i = 1}^n (Y_i - P_{i, n}g)^2 + \lambda\mathcal{E}(g, g)
\end{equation}

whose optimal solution can be expressed as:

\begin{equation*}
    \hat{f}_{D, \lambda} = \Big(\frac{\gamma^2_n}{n}\sum_{i = 1}^n e_{i, n} \otimes e_{i, n} + \lambda \Big)^{-1} \frac{\gamma_n}{n}\sum_{i = 1}^n Y_i e_{i, n}
\end{equation*}

or equivalently:
\begin{equation}
\label{eq: Representer Theorem}
    \hat{f}_{D, \lambda} = \sum_{i = 1}^n c_i \gamma_n e_{i, n}
\end{equation}
where $\mathbf{c} = [c_1, \ldots, c_n]^T \in \mathbb{R}^n$ is given by:
\begin{align}
    \mathbf{c} & = (\mathbf{G}_n + n\lambda \mathbf{I})^{-1}\mathbf{y} \nonumber \\
    \mathbf{y} & = [Y_1, \ldots, Y_n]^T \nonumber \\
    (\mathbf{G}_n)_{i, j} & = \gamma^2_n\mathcal{E}(e_{i, n}, e_{j, n}) \label{eq: Gram Matrix}
\end{align}
In other words, our perturbed problem \eqref{eq: Capacitary Ridge Regression} \textbf{enjoys a representer theorem \eqref{eq: Representer Theorem} in terms of the equilibrium potentials $e_{i, n}$}. In section \ref{Random Obstacle Regression}, we will illustrate that the scaled potentials $\gamma_n  e_{i, n}$ are  simply truncations of the Green kernel:
\begin{equation*}
    \gamma_n e_{i, n}(y) = G(X_i, y) \wedge \gamma_n
\end{equation*}

We emphasize that throughout this paper, we notably do not assume the smoothness of the underlying domain $\mathcal{X}$. Instead, we examine the interaction of the analytic (local Poincare inequality), geometric (volume doubling), and probabilistic (exit time bounds) properties of the Dirichlet form in determining the consistency of the sample estimator $\hat{f}_{D, \lambda}$ (see section \ref{Assumptions} and remarks therein). Hence, our analysis readily applies to both the smooth setting of Riemannian manifolds, and rough metric measure spaces, such as fractals, that do not possess geodesics. 

\subsection{Related Work}

Nonparametric regression in the first-order Euclidean Sobolev spaces $\mathbb{H}^1(\mathbb{R}^d)$ has been considered in a series of recent works \cite{green2021minimax, green2023minimax, garcia2020maximum, trillos2022rates}. Their techniques hinge on a careful spectral analysis of a smoothed graph Laplacian, which is harnessed either via regularization \cite{green2021minimax, trillos2022rates} or principal components regression \cite{green2023minimax}. In \cite{green2023minimax}, the authors observe that graph-based principal components regression, like its population counterpart spectral series regression \cite{lee2016spectral, dhillon2013risk}, enjoys superior consistency guarantees relative to Laplacian regularization. While graph-based approaches enjoy computational tractability, the latter works focus on estimating the \textit{in-sample} mean-square error. In practice, out-of-sample error is of greater interest, with risk typically measured with respect to the population. \cite{green2021statistical} briefly considers out-of-sample extensions with kernel smoothers, however their comparison with the in-sample MSE relies on strong smoothness assumptions on the underlying sampling measure, which is typically unknown. Moreover, the kernel smoothing technique is only applicable on Euclidean space, and has no tractable counterpart on general Dirichlet metric measure spaces which are considered here. \\

Nonparametric estimation on more general metric spaces has been studied in \cite{castillo2014thomas, efromovich2000sharp, angers2005multivariate, griebel2018regularized}, with a primary focus on compact manifolds. Like the kernel regression literature, these works only treat the supercritical regime where the hypothesis class contains bounded, continuous functions. Moreover, these works employ the machinery of heat-kernel generated frames that crucially involve the eigenfunctions of the population covariance operator, which is typically unknown. \\

We note that Dirichlet spaces arise as a natural hypothesis class in several machine learning tasks, due to their inherent association with an energy functional (the Dirichlet form) which can concisely capture the intrinsic geometry of both point clouds and their continuum limits. Indeed, on graphs, Dirichlet spaces have classically served as the canonical smoothness class \cite{dong2016learning} for graph signals, where the graph Dirichlet norm is understood to reflect the ``fitness'' between the signal and the graph topology \cite{hu2013graph}. Over the past two decades, a wealth of works \cite{singer2006graph, coifman2008graph, belkin2006convergence} have explored the convergence of this discrete notion of smoothness to its continuous counterpart in the large data limit, typically with the objective of quantifying the error incurred in approximating some population-level statistic by a corresponding graph-based sample estimator. While this convergence phenomenon is well-understood on Riemannian manifolds with quasi-uniformly sampled data \cite{belkin2006convergence}, this question remains highly nontrivial in rough non-geodesic metric measure spaces, such as fractals, where classical renormalizations break down \cite{akwei2024convergence}. Even when convergence is guaranteed, estimators constructed using Laplacian regularization typically possess suboptimal sample complexity \cite{green2023minimax} due to the approximation error between the graph and manifold ridge penalties. Here, we will demonstrate that we can bypass these concerns by working directly with the continuous-space Dirichlet energy on a larger, known ambient domain containing the support of the sampling measure. In certain cases, our continuous-space Dirichlet penalty can be viewed, in a sense, as an approximate continuum limit of a smoothed graph Laplacian constructed with (sub)-Gaussian edge weights (see Remark \ref{Continuum Limit})
\subsection{Contributions}
In this paper, we study the \textit{out-of-sample} (population) risk of nonparametric regressors in \textit{subcritical} Dirichlet spaces, where the learning target may not be continuous in the ambient topology. In our main result, Theorem \ref{Main Theorem}, we obtain the first \textit{optimal} out-of-sample convergence guarantees for nonparametric regression in general subcritical Dirichlet spaces. In contrast to previous works, our analysis considers general metric measure spaces, including both smooth domains, such as geodesic Riemannian manifolds, as well as nonsmooth domains such as graphs and fractals. As previewed in section \ref{Random Obstacle Preview}, our approach is based on directly renormalizing the continuous space Green's function, which we demonstrate is equivalent to smoothing the elements of the Dirichlet space by certain ``local averages'' in order to provide pointwise definition. Our approach is inspired by the construction of Liouville quantum gravity for log-correlated Gaussian fields \cite{duplantier2011liouville, duplantier2017log}, where the random field is similarly renormalized via ``circle averages''. In fact, the capacitary mean in \eqref{eq: Capacitary Mean} is a direct generalization of circle averaging to abstract Dirichlet spaces (see section \ref{Capacity Preliminaries}). A cornerstone of our approach is the independence of our renormalization technique from the sampling measure, which follows from the invariance of the Green's function under time-change. We strive to make minimal assumptions on the sampling measure itself, requiring only a doubling condition and a scale-invariant Poincar\'e inequality (see section \ref{Assumptions}). \\

Our work also contributes to the broader literature on nonparametric estimation over non-pregaussian classes. Indeed, subcritical Dirichlet spaces are non-pregaussian (and thereby non-Donsker), and namely do not admit well-defined notions of uniform or bracketing metric entropies due to their elements lacking pointwise definition. Hence, classical empirical process  techniques \cite{gine2016mathematical, van1996weak} are inapplicable, motivating the integral operator approach adopted in this work. Nevertheless, in the setting of ridge regression, our random obstacle estimator achieves optimal convergence rates, which are typically unattainable in the non-Donsker regime. For pure empirical risk minimization (constrained least squares), our estimator exhibits the same consistency guarantee expected for supercritical non-Donsker classes with the $L^2(\mathbb{P})$ covering metric entropy playing the role of the $L^2(\mathbb{P})$ bracketing entropy (see discussion after Theorem \ref{ERM Risk}). 

\subsection{Overview of the Paper}
In section \ref{Problem Setup} we introduce the data-generating model, review the basic theory of Dirichlet forms (section \ref{Preliminaries}), and outline our key assumptions (section \ref{Assumptions}). In section \ref{Random Obstacle Regression}, we review the formulation of random obstacle regression (section \ref{Green Truncation}), and present our main consistency result (section \ref{Main Consistency Result}) which demonstrates that renormalized ridge regression is rate-optimal. In section \ref{Random Obstacle ERM}, we consider pure empirical risk minimization over Dirichlet balls, and demonstrate that under stronger assumptions on the curvature of the metric measure space and the additive noise in our data-generating model, we can improve dependence on the confidence level at the expense of optimal sample complexity in our upper bound. 

\section{Problem Setup}
\label{Problem Setup}
Let $(\mathcal{M}, \mu, d)$ be a locally compact, separable, metric measure space and $\mathcal{X} \subset \mathcal{M}$ a compact, proper subset. We consider a probability measure $\nu$ obtained from the restriction of the ambient measure to $\text{supp}(\nu) = \mathcal{X}$, i.e. 
\begin{equation}
\label{eq: Sampling Measure}
    \nu = \frac{\mu|_{\mathcal{X}}}{\mu(\mathcal{X})}
\end{equation}
We suppose our data $D = \{(X_1, Y_1), \ldots, (X_n, Y_n)\}$ is sampled i.i.d, $(X, Y) \sim \mathbb{P}$, from an additive noise model:
\begin{equation}
\label{eq: Additive Noise Model}
    Y = f^{*}(X) + \epsilon
\end{equation}
where $\mathbb{P}|_{\mathcal{X}} = \nu$, $X \indep \epsilon$, $\mathbb{E}[\epsilon] = 0$, $\mathbb{E}[\epsilon^2] = \rho^2$ and $f^{*} \in \mathcal{H} \cap L^2(\nu)$ (defined below). In order for \eqref{eq: Additive Noise Model} to be meaningful, we understand $f^{*}$ to be a fixed representative in its $L^2(\nu)$-equivalence class, which we may always assume is quasi-continuous (see section \ref{Capacity Preliminaries}). \\

Throughout this paper, we assume \textbf{$\mathcal{X}^{\circ} \subset \mathcal{M}$ is an $A$-uniform domain} (here $^{\circ}$ denotes the topological interior). An open subset $S \subset \mathcal{M}$ is called $A$-uniform \cite{murugan2024heat} for $A > 1$ if for every $x, y \in S$, there exists a curve $\gamma: [0, 1] \to \mathcal{M}$ with $\gamma(0) = x$ and $\gamma(1)=y$ such that $\text{diam}(\gamma) \leq Ad(x, y)$ and for each $t \in (0, 1)$:
\begin{equation*}
    d(\gamma(t), \mathcal{M} \setminus S) \geq A^{-1}\min \{d(x, \gamma(t)), d(y, \gamma(t))\}
\end{equation*}

Uniform domains are abundant in various contexts including the Sobolev extension property \cite{jones1981quasiconformal}, Gromov hyperbolicity \cite{bonk2001uniformizing}, and geometric function theory \cite{martio1979injectivity}, and include the more commonly studied Lipschitz and NTA domains studied in the Euclidean setting. It is important to note uniform domains may be readily constructed in both smooth settings, such as length spaces, as well as rough settings, such as fractals that do not possess geodesics \cite{rajala2021approximation, murugan2024heat}.

\subsection{Dirichlet Form Preliminaries}
\label{Preliminaries}
We consider a Dirichlet form $\mathcal{E}$ on $L^2(\mathcal{M}, \mu)$ giving us a metric measure Dirichlet space (MMD) $(\mathcal{M}, \mu, d, \mathcal{E})$.  The domain of $\mathcal{E}$, denoted by $\mathcal{H}$, is assumed to be dense in $L^2(\mathcal{M}, \mu)$ and will be referred to as a Dirichlet space. We suppose $(\mathcal{E}, \mathcal{H})$ is \textit{regular}, which means $C_0(\mathcal{M}) \cap \mathcal{H}$ is dense in both $\mathcal{C}_0(\mathcal{M})$ and $\mathcal{H}$, where $\mathcal{C}_0(\mathcal{M})$ is the space of continuous functions with compact support in $\mathcal{M}$ (with uniform norm) and $\mathcal{H}$ is endowed with norm $||\cdot||_{\mathcal{H}_1}$:
\begin{equation*}
    ||g||_{\mathcal{H}_1} \equiv \sqrt{\mathcal{E}(g, g) + ||g||^2_{L^2(\mu)}}
\end{equation*}
Without loss of generality, we always identify $g \in \mathcal{H}$ with its quasi-continuous modification (which exists by regularity). We suppose our form is \textit{strongly local}, which means that $\mathcal{E}(g, h) = 0$ for all $g, h \in \mathcal{H}$ with compact supports such that $g$ is constant in a neighborhood of the support of $h$. \\

Let $\Delta$ denote the generator of $(\mathcal{E}, \mathcal{H})$, a self-adjoint operator on $L^2(\mathcal{M}, \mu)$ with domain dense in $\mathcal{H}$ such that for all $h \in \text{dom}(\Delta)$ and $g \in \mathcal{F}$, we have:
\begin{equation*}
    \mathcal{E}(h, g) = -\int (\Delta h)g d\mu
\end{equation*}
The associated heat semigroup:
\begin{equation*}
    P_t = e^{t\Delta}
\end{equation*}
is a family of contractive, strongly continuous, self-adjoint operators on $L^2$ that are \textit{Markovian}. In addition, a family $\{p_t\}_{t \geq 0}$ of nonnegative, $\mu \times \mu$ measurable functions on $\mathcal{M} \times \mathcal{M}$ is called the \textit{heat kernel} of $(\mathcal{E}, \mathcal{H})$ if they serve as integral kernels for the semigroup $\{P_t\}_{t \geq 0}$:
\begin{equation*}
    P_t h(x) = \int_{\mathcal{M}} p_t(x, y)h(y)d\mu(y)
\end{equation*}
for almost all $x \in \mathcal{M}$ and $h \in L^2(\mu)$. \\

Central to our analysis will be the \textit{Green function} of $(\mathcal{E}, \mathcal{H})$ defined $\mu \times \mu$ a.e. as:
\begin{equation}
\label{eq: Green Definition}
    G(x, y) = \int_{0}^{\infty} p_t(x, y)dt
\end{equation}
The existence of a Green's function is guaranteed under mild conditions (Lemma 5.2 in \cite{grigor2014heat}) which will always be satisfied here (we notably assume $(\mathcal{E}, \mathcal{H})$ is transient). Note, that while the Dirichlet space $(\mathcal{E}, \mathcal{H})$ is constructed over $L^2(\mathcal{M}, \mu)$, the Green kernel itself does not depend on $\mu$ and is invariant under time-change (change of reference measure; see section 6.2.1 in \cite{fukushima2010dirichlet}). Most importantly, if we define the operator $\mathcal{G}: L^2(\mathcal{M}, \mu) \to \mathcal{H}$ a.e. by:
\begin{equation*}
    \mathcal{G}h(x) = \int G(x, y)h(y)d\mu(y)
\end{equation*}
for $h \in L^2(\mathcal{M}, \mu)$, then $\mathcal{G}$ ``inverts'' the generator $-\Delta$:
\begin{equation*}
    \mathcal{E}(\mathcal{G}h, g) = \int_{\mathcal{M}} h(x)g(x)d\mu(x)
\end{equation*}
for all $g \in \mathcal{H}$. In this way, $G(x, y)$ resembles a reproducing kernel --- however it is crucially \textit{not reproducing} because $G(x, \cdot) \not\in L^2(\mathcal{M}, \mu)$ for any $x \in \mathcal{M}$ as it possesses a singularity at $x$ (in particular  $G(x, \cdot) \not\in \mathcal{H}$). A central focus of this paper is on renormalizing the Green function $G$ so that it may represent a continuous linear functional on $\mathcal{H}$. \\

By its Markovian property, the semigroup $\{P_t\}_{t \geq 0}$ additionally generates a \textit{Hunt process} $\{X_t\}_{t \geq 0}$ with infintesimal generator $\Delta$. A Dirichlet form is \textit{transient} iff $\{X_t\}_{t \geq 0}$ is a transient stochastic process. \textbf{In this paper, we always assume the Dirichlet form $(\mathcal{E}, \mathcal{H})$ is transient}. For more details on the interaction of the probabilistic and analytic properties of $(\mathcal{E}, \mathcal{H})$, please refer to \cite{fukushima2010dirichlet}. 

\begin{example}
As mentioned previously, the canonical example of a Dirichlet space is the first order Sobolev space $\mathbb{H}_0^1(\mathbb{R}^d, dx)$ with Dirichlet boundary conditions. The Dirichlet form for this space is given by:
\begin{equation}
\label{eq: Riemannian Gradient}
    \mathcal{E}(g, h) = \int_{\mathbb{R}^d} (\nabla g \cdot \nabla h)(x) dx
\end{equation}
Other important examples of Dirichlet spaces are formed on Riemannian manifolds, homogeneous spaces, and fractals, see \cite{barlow2006diffusions, kigami2001analysis}. 
\end{example}
Note that for each $g \in \mathcal{H}$ we may define the \textit{energy measure} $\Gamma(g, g)$ associated with $g$ as the unique positive Radon measure such that for any $u \in \mathcal{H} \cap C_0(\mathcal{M})$:
\begin{equation*}
    \int_{\mathcal{M}} u \hspace{0.5mm} d\Gamma(g, g) = \mathcal{E}(ug, g) - \frac{1}{2}\mathcal{E}(u, g^2)
\end{equation*}

For $g, h \in \mathcal{H}$, we likewise introduced the signed measure $\Gamma(g, h)$ defined as:
\begin{equation*}
    \Gamma(g, h) = \frac{1}{2}\Big(\Gamma(h + g, h + g) - \Gamma(h, h) - \Gamma(g, g)\Big)
\end{equation*}
so that
\begin{equation*}
    \int_{\mathcal{M}} u \hspace{0.5mm} d\Gamma(g, h) =  \frac{1}{2}\Big(\mathcal{E}(ug, h) + \mathcal{E}(g, uh) - \mathcal{E}(u, gh)\Big)
\end{equation*}
Clearly:
\begin{equation*}
    \int_{\mathcal{M}} d\Gamma(g, h) = \mathcal{E}(g, h)
\end{equation*}

The energy measure enjoys the chain, Leibniz, and Cauchy-Schwarz properties \cite{grigor2015generalized} --- namely  for all $u, w, \phi, \psi \in \mathcal{F} \cap C_0(\mathcal{M})$.

\begin{align}
    d\Gamma(\phi, uw) & = u \hspace{0.5mm} d\Gamma(\phi, w) + w \hspace{0.5mm} d\Gamma(\phi, u) \label{eq: Leibniz Def} \\ 
    d\Gamma(\Psi(u), w) & = \Psi'(u) d\Gamma(u, w) \label{eq: Chain Def} \\
    \Big|\int \phi \psi \hspace{0.5mm} d\Gamma(u, w)\Big| & \leq \sqrt{\int \phi^2 d\Gamma(u, u) \int \psi^2 d\Gamma(w, w)} \label{eq: CS Inequality}
\end{align}

\subsubsection{Capacity}
\label{Capacity Preliminaries}

Denote by $\mathcal{O}$ the set of open subsets of $\mathcal{M}$. For any open set $A \in \mathcal{O}$, we define:
\begin{equation*}
    \mathcal{L}(\mathcal{A}) = \{g \in \mathcal{H}_{e}: g \geq \mathbb{1}_{\mathcal{A}} \hspace{2mm} \mu-\text{a.e.}\}
\end{equation*}
where $\mathcal{H}_e$ is the extended Dirichlet space (see section \ref{Trace Preliminaries} below). Then we define the $\mathcal{E}$-capacity  $\text{Cap}(\mathcal{A})$ as:
\begin{equation}
\label{eq: Capacity Definition}
    \text{Cap}(\mathcal{A}) = \inf_{g \in \mathcal{L}(\mathcal{A})} \mathcal{E}(g, g)
\end{equation}
By the transience of $(\mathcal{E}, \mathcal{H})$, there is a unique minimizer $e_{\mathcal{A}} \in \mathcal{L}_{\mathcal{A}}$ achieving the infimum in \eqref{eq: Capacity Definition}, which we call the \textit{equilibrium potential} of $\mathcal{A}$ (the existence of this minimizer is guaranteed by the convexity of $\mathcal{L}(\mathcal{A})$ and its closure with respect to $\mathcal{E}$; see section 2.1 of \cite{fukushima2010dirichlet}). Associated with $e_{\mathcal{A}}$ is a unique measure $\nu_{A}$ supported on $\bar{\mathcal{A}}$ called the \textit{equilibrium measure of} $\mathcal{A}$ which represents the action of $e_{A}$, i.e.:

\begin{equation*}
    \mathcal{E}(h, e_{A}) = \int_{\mathcal{M}} \tilde{h}(x) d\nu_{A}(x)
\end{equation*}

where $\tilde{h}$ denotes a quasi-continuous representative of $h \in \mathcal{H}$. We say a property holds \textit{quasi-everywhere} (denoted q.e.) if it holds everywhere in $\mathcal{M}$ except for a set of capacity zero. By the regularity of $(\mathcal{E}, \mathcal{H})$, every $h \in \mathcal{H}$ possesses a quasi-continuous representative $\tilde{h}$--- in the sequel we always identify with this representative ($h = \tilde{h}$).


\subsubsection{Extended Dirichlet Space}
\label{Trace Preliminaries}
Note that $\sqrt{\mathcal{E}}$ is not organically a norm on $\mathcal{H}$ (indeed observe we discussed completion with respect to $\|\cdot\|_{\mathcal{H}_1}$). When $(\mathcal{E}, \mathcal{H})$ is transient (as we will always assume here), we may extend $\mathcal{H}$ to produce the \textit{extended Dirichlet space} $\mathcal{H}_{e}$:
\begin{equation*}
    \mathcal{H}_e \equiv \{g: \exists \{g_n\}_{n = 1}^{\infty} \subset \mathcal{H} \hspace{1mm} \text{is} \hspace{1mm} \text{$\mathcal{E}$-Cauchy}, g = \lim_{n} g_n \hspace{2mm} \mu-a.e.\}
\end{equation*}

By Theorem 1.5.2 in \cite{fukushima2010dirichlet}, $\mathcal{H}_e$ is a Hilbert space with inner product $\mathcal{E}(\cdot, \cdot)$ and norm $\sqrt{\mathcal{E}}$. Here, we adopt the convention of Beurling-Deny and further assume $\mathcal{H}_{e}$ is regular and $\mathcal{H}_{e} \subset L^1_{\text{loc}}(\mathcal{X}, \mu)$. \\

Now, suppose $m$ is a smooth measure (see \cite{fukushima2010dirichlet} for a definition) with compact support such that $\mathcal{H}_e \subset L^1(\mathcal{M}, m)$. Then, we have, by regularity, that $h \cdot m$ is a finite Radon measure for every $h \in \mathcal{H}^{+}_e$, hence, by Theorem 2.2.4(ii) of \cite{fukushima2010dirichlet}, $h \cdot m$ is of finite $0$-order energy integral, and therefore there exists a potential $U(h \cdot m) \in \mathcal{H}_e$ and operator $C_{m}: \mathcal{H}_e \to \mathcal{H}_e$ such that:
\begin{equation}
\label{eq: Covariance Reproducing}
    \mathcal{E}(C_{m}h, g) = \mathcal{E}(U(h \cdot m), g) = \int_{\mathcal{M}} h(y)g(y) dm(y)
\end{equation}
for all $g \in \mathcal{H}_e$. It is easy to see that $C_{m}$ is self-adjoint and positive definite. Moreover, a quasi-continuous version of $C_{m}h$ is given by:
\begin{equation}
\label{eq: 0-order resolvent}
    C_{m}h(x) = \int_{\mathcal{M}} G(x, y)h(y)dm(y)
\end{equation}

Note that our regression function $f^{*} \in L^2(\nu)$ by definition. Observe that $\mathcal{H}_e$ admits the orthogonal decomposition:
\begin{equation}
\label{Decomposition of Extended Dirichlet Space}
    \mathcal{H}_e = \mathcal{H}_{\mathcal{M} \setminus \mathcal{X}} \otimes \mathscr{H}_{\mathcal{X}}
\end{equation}
where $\mathcal{H}_{\mathcal{M} \setminus \mathcal{X}} = \{g \in \mathcal{H}_e: g = 0 \hspace{1mm} \text{on} \hspace{1mm} \mathcal{X} \hspace{2mm} \text{q.e.}\}$. Abusing notation, we always identify $f^{*}$ with its harmonic extension in $\mathcal{M} \setminus \mathcal{X}$, i.e. $P_{\mathcal{X}}\tilde{f}^{*} \in \mathscr{H}_{\mathcal{X}}$ where $P_{\mathcal{X}}$ is the orthogonal projection onto $\mathscr{H}_{\mathcal{X}}$ and $\tilde{f}^{*} \in \mathcal{H}_e$ such that $\tilde{f}^{*} = f^{*}$ $\nu$-a.e.
\begin{remark}[Abuse of Notation]
To ease readability, in this paper we abuse notation and \textbf{denote the extended Dirichlet space also as $\mathcal{H}$} (noting that our regression function $f^{*} \in \mathcal{H} \cap L^2(\nu)$).  
\end{remark}
\subsection{Assumptions}
\label{Assumptions}
We impose the following assumptions on our Dirichlet form and metric measure space:

\subsubsection{Volume Doubling}
We say $\mu$ is volume doubling \eqref{eq: VD} if for any ball $B = B(x, r) \subset \mathcal{M}$
\begin{equation}
\label{eq: VD}
\tag{VD}
    \mu(2B) \leq C\mu(B) 
\end{equation}
where $C > 1$ is independent of the ball $B$. We define the volume exponent:
\begin{equation*}
    \alpha = \frac{\log C}{\log 2}
\end{equation*}
We say that $\mu$ is reverse volume doubling, if for any ball $B = B(x, r) \subset \mathcal{M}$:
\begin{equation}
\label{eq: RVD}
\tag{RVD}
    \mu(B) \leq K^{-1}\mu(2B)
\end{equation}
where again the constant $K > 1$ is independent of the ball $B$. Define the exponent:
\begin{equation*}
    \alpha' = \frac{\log K}{\log 2}
\end{equation*}
Note that it is known that \eqref{eq: VD} implies \eqref{eq: RVD} when $\mathcal{M}$ is unbounded and connected.  \\

For a ball $B(x, r)$, we denote its volume by:
\begin{equation*}
    V(x, r) = \mu(B(x, r))
\end{equation*}

We will further assume that there exists $\tilde{r} \in (0, 1)$, so that for all $r \in (0, \tilde{r})$:
\begin{equation}
\label{eq: VD/RVD}
\tag{V}
    \tilde{C}^{-1} r^{\alpha} \leq V(x, r) \leq \tilde{C} r^{\alpha}
\end{equation}
for all $x \in \mathcal{M}$ and some $\tilde{C} > 1$ independent of $x$. 

\subsubsection{Poincar\'e Inequality}
We say the \textit{Poincar\'e inequality} \eqref{eq: Poincare Inequality} holds if there exists constants $C_1 > 0$, $\sigma \in (0, 1)$  and a nondecreasing function $\Psi(\cdot)$ such that for all balls $B = B(x, r) \subset \mathcal{M}$ and $h \in \mathcal{H}$:

\begin{equation}
\label{eq: Poincare Inequality}
\tag{PI}
    \int_{\sigma B} (h - h_{\sigma B})^2 d\mu \leq C_1\Psi(r)\int_{B} d\Gamma(h, h)
\end{equation}
where for $\mathcal{A} \subset \mathcal{M}$, $h_{A} = \frac{1}{\mu(A)} \int_{A} h(x) d \mu(x)$ is the mean, and $d\Gamma(h, h)$ is the energy measure of $h$, with respect to the Dirichlet form $(\mathcal{E}, \mathcal{H})$. 

\subsubsection{Mean Exit Time Bounds}
We say the \textit{mean exit time bounds} \eqref{eq: Exit Upper}/\eqref{eq: Exit Lower} hold if there exists a constant $C_2 > 1$ such that for all balls $B = B(x, r) \subset \mathcal{M}$:

\begin{align}
    \sup_{x \in B} \mathbb{E}_x[\tau_{B}] & \leq C_2\Psi(r)  \label{eq: Exit Upper} \tag{UE}\\
    \inf_{x \in B} \mathbb{E}_x[\tau_{B}] & \geq C_2^{-1}\Psi(r) \label{eq: Exit Lower} \tag{LE}
\end{align}
where $\tau_B$ denotes the exit time of $\{X_t\}_{t \geq 1}$ (the Hunt process associated with $\mathcal{E}$) from the ball $B$. \\

Throughout this paper, we will impose the following growth condition on $\Psi$. Suppose for all $0 < r_1 < r_2$:
\begin{equation}
\label{eq: Growth Condition}
\tag{$\Psi$}
    \frac{1}{C_{\Psi}}\Big(\frac{r_2}{r_1}\Big)^{\beta} \leq \frac{\Psi(r_2)}{\Psi(r_1)} \leq C_{\Psi} \Big(\frac{r_2}{r_1}\Big)^{\beta'}
\end{equation}
for some $C_{\Psi} \geq 1$ and $1 < \beta \leq \beta'$, with $\beta' < \alpha'$ and $\beta < \alpha$ (recall $\alpha$ and $\alpha'$ are the volume doubling and reverse volume doubling exponents, respectively). Moreover, we assume there exists a $r_0 \in (0, 1)$ such that for all $0 < r \leq r_0 \leq 1$:
\begin{equation*}
\tag{$\Psi$}
    \tilde{C}^{-1}_{\Psi}r^{\beta} \leq \Psi(r) \leq \tilde{C}_{\Psi} r^{\beta}
\end{equation*}
for some $\tilde{C}_{\Psi} > 1$. 
\subsubsection{Remarks on Assumptions}
\label{Assumption Remarks}
Before proceeding, we make some remarks on the common space-time scaling function $\Psi(r)$ in the above assumptions, and emphasize the generality of our framework. We first note that on any geodesically complete Riemannian manifold (with nonnegative curvature), \eqref{eq: Poincare Inequality} and \eqref{eq: Exit Upper}/\eqref{eq: Exit Lower} are satisfied with $\Psi(r) = r^2$ by the canonical Dirichlet form \eqref{eq: Riemannian Gradient}. More generally, the scale-invariant Poincare inequality \eqref{eq: Poincare Inequality} with $\Psi(r) = r^2$ is satisfied on $\text{RCD}^{*}$ metric measure spaces (with curvature bounded below), Carnot-Caratheodory spaces, domains of uniformly elliptic and even some degenerate (sub)-elliptic/parabolic operators (see further examples in \cite{coulhon2012heat, coulhon2020gradient}). In the fractal setting, the Sierpinski carpet on $\mathbb{R}^d$ with $d \geq 3$ possesses a canonical, transient Dirichlet form, and satisfies \eqref{eq: Poincare Inequality}, \eqref{eq: Exit Upper}/\eqref{eq: Exit Lower}, and \eqref{eq: Growth Condition} with $\beta$ equal to the walk dimension of the associated Brownian motion \cite{barlow2013analysis}. More generally, the combination of \eqref{eq: Poincare Inequality} and \eqref{eq: VD} implies \eqref{eq: Exit Upper}/\eqref{eq: Exit Lower} in the presence of upper and lower heat kernel estimates \cite{grigor2015generalized}. Clearly, \eqref{eq: VD} and \eqref{eq: RVD} are satisfied by the Lebesgue measure on the Euclidean Sobolev space $\mathbb{H}^1_{0}(\mathbb{R}^d)$, moreover on many fractal spaces we obtain $V(x, r) \asymp r^{\alpha}$ for all $x \in \mathcal{M}$ and $r > 0$ \Big(e.g. $\alpha = \frac{\log 3^{d} - 1}{\log 3}$ on the Sierpinski carpet on $\mathbb{R}^d$\Big). Moreover, \eqref{eq: VD}, \eqref{eq: RVD}, \eqref{eq: VD/RVD} are satisfied whenever $\mu$ admits a density, bounded above and below, with respect to the $\alpha$-dimensional Hausdorff measure. \\

Crucially, by Theorem 1.2 in \cite{grigor2015generalized}, the combination of \eqref{eq: Poincare Inequality}, \eqref{eq: Exit Upper}, and \eqref{eq: Exit Lower} imply the following \textit{cutoff Sobolev inequality}: \newline

\textbf{Cutoff Sobolev inequality} \newline
A cutoff function $\phi \in \mathcal{H}$ in $\text{cutoff}(\mathcal{A}, \mathcal{B})$ satisfies:
\begin{itemize}
    \item $0 \leq \phi \leq 1$ on $\mathcal{M}$
    \item $\phi \equiv 1$ on $\mathcal{A}$ 
    \item $\phi \equiv 0$ on $\mathcal{M} \setminus \mathcal{B}$
\end{itemize}
We say the cutoff Sobolev inequality \eqref{eq: Cutoff Sobolev} holds if there exists constants $C_3, C_4 > 0$ and a nondecreasing function $\Psi(\cdot)$ such that for any concentric balls $B_1 = B(x, R)$ and $B_2 = B(x, R + r)$, there exists a cutoff function $\phi \in \text{cutoff}(B_1, B_2)$ such that for any measurable function $h \in \mathcal{H} \cap L^{\infty}$ we have:
\begin{equation}
\label{eq: Cutoff Sobolev}
\tag{CSA}
    \int_{B_2 \setminus B_1} h^2 \hspace{0.5mm} d\Gamma(\phi, \phi) \leq C_3\int_{B_2} d\Gamma(h, h) + \frac{C_4}{\Psi(r)}\int_{B_2 \setminus B_1} h^2 d\mu
\end{equation}

In \cite{murugan2020length}, it was demonstrated (Corollary 1.10) that when $(\mathcal{M}, d)$ satisfies \eqref{eq: Poincare Inequality}, \eqref{eq: VD}, and \eqref{eq: Cutoff Sobolev} then:
\begin{equation*}
    \Psi(r) \preceq r^2
\end{equation*}
as $r \to 0$ (i.e. the walk dimension is at least 2). Moreover, \cite{kajino2020singularity} showed that if \begin{equation*}
    \lim \inf_{\lambda \to \infty, r \to 0} \frac{\lambda^2 \Psi(r \lambda^{-1})}{\Psi(r)} = 0
\end{equation*}
then $d\Gamma(g, g)$ is singular with respect to $\mu$ for every $g \in \mathcal{H}$ (i.e. the space is fractal). Indeed, fractals are characterized by their heat kernels enjoying \textit{subgaussian} bounds, as opposed to the Gaussian bounds observed on geodesic Riemannian manifolds.

\subsubsection{$\mathcal{E}$ as a continuum limit}
\label{Continuum Limit}
In certain smooth spaces $\mathcal{M}$, $\mathcal{E}$ can roughly be viewed as the continuum limit of graph Dirichlet energies. Note, by Theorem 1.2 in \cite{grigor2015generalized}, we have that the combination of \eqref{eq: VD}, \eqref{eq: Poincare Inequality}, and \eqref{eq: Exit Upper}/\eqref{eq: Exit Lower} implies the following heat kernel estimates:
\begin{align}
    p_t(x, y) & \leq \frac{C}{V(x, t^{\frac{1}{\beta}})}\text{exp}\Big(-c\Big(\frac{d(x, y)^{\beta}}{t}\Big)^{\frac{1}{\beta - 1}}\Big) \label{eq: Upper Heat Kernel} \\
    p_t(x, y) & \geq \frac{C^{-1}}{V(x, t^{\frac{1}{\beta}})}\text{exp}\Big(-c\Big(\frac{d(x, y)^{\beta}}{t}\Big)^{\frac{1}{\beta - 1}}\Big) \hspace{2mm} \text{for} \hspace{1mm} d(x, y)^{\beta} \leq \epsilon t \label{eq: Lower Heat Kernel}
\end{align}
for some absolute constants $C > 1$ and $c, \epsilon > 0$. Now, suppose $\mathcal{M}$ is compact. Recall the definition of $\mathcal{E}$:
\begin{equation}
\label{eq: Semigroup Approximation}
    \mathcal{E}(f, f) = \lim_{t \to 0}  \mathcal{E}_{t}(f, f) \equiv \lim_{t \to 0} \frac{1}{t}\int_{\mathcal{M}} \int_{\mathcal{M}} p_t(x, y)(f(x) - f(y))^2 d\mu(x)d\mu(y)
\end{equation}
Let $\mathcal{E}_{t, n}(f, f)$ denote the sample form obtained from $\mathcal{E}_{t}(f, f)$ by replacing $\mu$ in \eqref{eq: Semigroup Approximation} with its empirical counterpart $\mu_n$. Then, $\mathcal{E}_{t, n}(f, f)$ can be roughly viewed as a graph approximation to $\mathcal{E}(f, f)$, constructed using a smoothed Laplacian with kernel:
\begin{equation*}
    K(x, y) = \frac{1}{V(x, t^{\frac{1}{\beta}})}\text{exp}\Big(-c\Big(\frac{d(x, y)}{t^{\beta}}\Big)^{\frac{1}{\beta - 1}}\Big) 
\end{equation*}
Note, this is merely a heuristic comparison, due to the local nature of the lower estimate in \eqref{eq: Lower Heat Kernel} and the ambiguity of the constants in \eqref{eq: Upper Heat Kernel}/\eqref{eq: Lower Heat Kernel}, there is no guarantee that such a smoothed graph Laplacian would converge to its continuous-space equivalent $\Delta$ in the large data limit. We refer the reader to  \cite{hinz2016closability} for more details regarding graph approximations to MMD spaces. 


\begin{remark}[Notation]
We use the notation $A \preceq B$ (likewise $A \succeq B$) when there exists an implicit constant $C > 0$ (dependent only on inessential parameters) such that $A \leq CB$ (likewise $A \geq CB$). When $A \preceq B$ and $A \succeq B$ we write $A \asymp B$. We will also repeatedly invoke two (semi-)norms:
\begin{align}
    ||h||^2_{\mathcal{H}} &  = \mathcal{E}(h,  h) \label{eq: 0-norm} \\
    ||h||^2_{\mathcal{H}_1} &  = \mathcal{E}(h,  h) +  \|h\|^2_{L^2(\nu)} \label{eq: 1-norm} 
\end{align}
for all $h \in \mathcal{H}$. For Banach spaces $\mathcal{A}$, $\mathcal{B}$ and an operator $T: \mathcal{A} \to \mathcal{B}$, we express its norm as $\|T\|_{\mathcal{A} \to \mathcal{B}}$. Further, we write $\mathbb{B}(A)$ to denote the unit ball in $\mathcal{A}$ (centered at $0 \in \mathcal{A}$) and $\mathbb{B}(f, \mathcal{A})$ to denote a unit ball in $\mathcal{A}$ centered at $f \in \mathcal{A}$. 
\end{remark}
\section{Random Obstacle Regression}
\label{Random Obstacle Regression}
In this section, we study the consistency of $\hat{f}_{D, \lambda}$ defined in \eqref{eq: Capacitary Ridge Regression}. Before, we present our main consistency result in Theorem \ref{Main Theorem}, we first review and elaborate on our approach, which was previewed in section \ref{Random Obstacle Preview}. 
\subsection{Renormalizing the Green Function}
\label{Green Truncation}

Recall that we wish to renormalize the following ill-posed optimization problem:

\begin{equation}
\label{eq: Ill-Posed}
    \min_{g \in \mathcal{H}} \frac{1}{n}\sum_{i = 1}^n (Y_i - g(X_i))^2 + \lambda \mathcal{E}(g, g)
\end{equation}

by replacing pointwise evaluations $g(X_i)$ (which are not continuous on $\mathcal{H}$) with certain local averages of $g$ on the boundaries of obstacles $\mathcal{O}_{i, n}$. More generally, for any $x \in \mathcal{X}$, we define the obstacle $\mathcal{O}_{x, n}$ given by:
\begin{equation}
\label{eq: Obstacle Def}
    \mathcal{O}_{x, n}(\gamma_n) = \{y \in \mathcal{M}: G(x, y) \geq \gamma_n\}
\end{equation}
where $G$ is the Green function defined in \eqref{eq: Green Definition} and $\{\gamma_n\}_{n = 1}^{\infty} \uparrow \infty$ is some increasing sequence of thresholds  which will be chosen subsequently. Observe that as $\gamma_n \to \infty$, the obstacle $\mathcal{O}_{x, n}(\gamma_n)$ shrinks toward the point $x \in \mathcal{X}$. With each obstacle, we associate an equilibrium potential $e_{x, n}$ that solves: 
\begin{align}
e_{x, n} & = \text{arg} \min_{g \in \mathcal{L}(\mathcal{O}_{x, n})} \mathcal{E}(g, g)  \label{eq: Cap Minimizer} \\
\mathcal{L}(\mathcal{O}_{x, n}) & \equiv \{g \in \mathcal{H}: g \geq \mathbb{1}_{\mathcal{O}_{x, n}}\} \nonumber 
\end{align}
where the existence of a unique minimizer $e_{x, n} \in \mathcal{H}$ for \eqref{eq: Cap Minimizer} is ensured by the assumed transience of  $(\mathcal{E}, \mathcal{H})$. \\

Probabilistically, $e_{x, n}$ can be expressed a.e. as:
\begin{equation}
\label{eq: Harmonic Extension}
e_{x, n}(y) = P_y(T_{\mathcal{O}_{x, n}} < \infty)
\end{equation}
where $T_{\mathcal{O}_{x, n}}$ denotes the hitting time of the Hunt process $\{X_t\}_{t \geq 0}$ to the obstacle $\mathcal{O}_{x, n}$ (Theorem 4.3.3 in \cite{fukushima2010dirichlet}).  Then, it follows from the harmonicity of $G(x, \cdot)$ in $\mathcal{M} \setminus \{x\}$ that $e_{x, n}$ can be expressed as:
\begin{equation}
\label{Green Potential}
    e_{x, n}(\cdot) = \frac{G(x, \cdot) \wedge \gamma_n}{\gamma_n}
\end{equation}
In other words, $e_{x, n}$ is simply a scaled version of the truncated Green's kernel $G(x, \cdot)$ which has been cutoff at $\gamma_n$. Indeed, we have by harmonicity, \eqref{eq: Obstacle Def} and \eqref{eq: Harmonic Extension}, that for $y \not \in \mathcal{O}_{x, n}$:
\begin{align*}
    G(y, x) & = \tilde{\mathbb{E}}_y[G(X_{T_{\mathcal{O}_{x, n}}}, x)] \\
    & = \tilde{\mathbb{E}}_y[G(X_{T_{\mathcal{O}_{x, n}}}, x)\mathbb{1}_{T_{\mathcal{O}_{x, n}} < \infty}] +  0 \cdot P_y(T_{\mathcal{O}_{x, n}} = \infty) \\
    & = \gamma_n \cdot P_y(T_{\mathcal{O}_{x, n}} < \infty) \\
    & = \gamma_n e_{x, n}(y)
\end{align*}
where $\tilde{\mathbb{E}}$ denotes expectation with respect to the law of $\{X_t\}_{t \geq 0}$. Now, recall from section \ref{Random Obstacle Preview}, there is a unique \textit{equilibrium measure} $\nu_{x, n}$ concentrated on $\partial \mathcal{O}_{x, n}$ such that for all $h \in \mathcal{H}$:

\begin{equation*}
    \mathcal{E}(h, e_{x, n}) = \int h(y)d\nu_{x, n}(y)
\end{equation*}

and $\nu_{x, n}(\partial \mathcal{O}_{x, n}) = \gamma_n^{-1} = \text{cap}(\mathcal{O}_{x, n})$. Equipped with the equilibrium potential $e_{x, n}$ and measure $\nu_{x, n}$, we now define the mean functional:
\begin{equation*}
    P_{x, n} h = \mathcal{E}(h, \gamma_n e_{x, n}) = \frac{1}{\nu_{x, n}(\partial \mathcal{O}_{x, n})}\int h(y)d\nu_{x, n}(y)
\end{equation*}
When $x = X_i$ for some $i \in [n]$, we simply denote $e_{x, n}, \nu_{x, n}$, and $P_{x, n}$ by $e_{i, n}, \nu_{i, n}$, and $P_{i, n}$, respectively. For each $i \in [n]$, we now replace $g(X_i)$ with $P_{i, n}g$ in \eqref{eq: Ill-Posed} to obtain:
\begin{equation}
\label{eq: Well-Posed}
    \text{arg}\min_{g \in \mathcal{H}} \frac{1}{n}\sum_{i = 1}^n (Y_i - P_{i, n}g)^2 + \lambda\mathcal{E}(g, g)
\end{equation}
whose optimal solution $\hat{f}_{D, \lambda}$ may be expressed as:
\begin{equation}
\label{eq: Representer Theorem 2}
    \hat{f}_{D, \lambda} = \sum_{i = 1}^n c_i \gamma_n e_{i, n}
\end{equation}
where $\mathbf{c} = [c_1, \ldots, c_n]^T \in \mathbb{R}^n$ is given by:
\begin{align*}
    \mathbf{c} & = (\mathbf{G}_n + n\lambda \mathbf{I})^{-1}\mathbf{y} \\
    \mathbf{y} & = [Y_1, \ldots, Y_n]^T \\
    (\mathbf{G}_n)_{i, j} & = \gamma^2_n\mathcal{E}(e_{i, n}, e_{j, n})
\end{align*}
In light of \eqref{Green Potential}, \eqref{eq: Representer Theorem 2} may be expressed more naturally as:
\begin{equation}
\label{eq: Final Sample Estimator}
    \hat{f}_{D, \lambda}(\cdot) = \sum_{i = 1}^n c_i (G(X_i, \cdot) \wedge \gamma_n)
\end{equation}

We emphasize that the representers in \eqref{eq: Final Sample Estimator} depend only on the form $\mathcal{E}$, and \textit{require no knowledge of the underlying sampling measure $\nu$}. This is cornerstone of our approach --- indeed, while there are many possible ways to approximate the point mass at $X_i$ with a ``smoothed'' local average, we choose the particular obstacle in \eqref{eq: Obstacle} because the corresponding representers  in \eqref{eq: Representer Theorem 2} may be explicitly formulated with minimal knowledge of the geometry of the Dirichlet form. 

\begin{example}
\label{ex: Euclidean Sobolev example}
Let $\mathcal{H} = \mathbb{H}_0^1(\mathbb{R}^d)$. Then, $G(x, y) = \frac{\Gamma(\frac{d}{2} - 1)}{2\pi^{\frac{d}{2}}} \|x - y\|^{2-d}$. Hence, for any $x \in \mathbb{R}^d$ and $\gamma_n > 0$, we have (see eq. 2.3.55 in \cite{sznitman2013brownian}):
\begin{align*}
    O_{x, n} & = B\Big(x, \Big(\frac{\gamma_n}{c(d)}\Big)^{-\frac{1}{d-2}} \Big) \\
    e_{x, n}(y) & = \frac{c(d)\|x - y\|^{2 - d}}{\gamma_n} \wedge 1 \\
    d\nu_{x, n}(y) & = \gamma^{-1}_n dS_{x, R_n}(y) \hspace{2mm} \text{for } R_n = \Big(\frac{\gamma_n}{c(d)}\Big)^{-\frac{1}{d-2}} 
\end{align*}
where $dS_{x, R}$ denotes the uniform measure over the sphere $\partial B(x, R)$ and $c(d) = \frac{\Gamma(\frac{d}{2} - 1)}{2\pi^{\frac{d}{2}}}$. Observe that all three of $O_{x, n}, e_{x, n}$, and $\nu_{x, n}$ make no reference to the data-generating measure $\nu$. 
\end{example}
\subsection{Consistency of Random Obstacle Regression}
\label{Main Consistency Result}
We now present our main theorem on the consistency of $\hat{f}_{D, \lambda}$. 
\begin{theorem}
\label{Main Theorem}
Suppose \eqref{eq: VD}, \eqref{eq: RVD}, \eqref{eq: VD/RVD}, \eqref{eq: Poincare Inequality}, \eqref{eq: Exit Upper}, \eqref{eq: Exit Lower}, and \eqref{eq: Growth Condition} hold. Then, with a choice of $\gamma_n \asymp n^{\frac{\alpha - \beta}{\alpha + \beta}}$ and $\lambda \asymp n^{-\frac{\beta}{\alpha + \beta}}$, we obtain:
\begin{equation}
\label{eq: Main Theorem Statement}
    ||\hat{f}_{\lambda, D} - f^{*}||^2_{L^2(\nu)} \preceq \delta^{-1} n^{-\frac{\beta}{\alpha + \beta}} \|f^{*}\|^2_{\mathcal{H}}
\end{equation}
with probability $1 - 3\delta$
\end{theorem}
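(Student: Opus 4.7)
The plan is to reformulate \eqref{eq: Capacitary Ridge Regression} as kernel ridge regression in the Dirichlet Hilbert space $(\mathcal{H}, \mathcal{E})$, with representers $\phi_x := \gamma_n e_{x, n} \in \mathcal{H}$ so that $P_{x, n} g = \langle g, \phi_x \rangle_{\mathcal{E}}$. Define the sampling operator $S: \mathcal{H} \to L^2(\nu)$ by $(Sg)(x) = P_{x, n} g$ along with its empirical counterpart $S_n$, and the covariance operators $T = S^* S$ and $T_n = S_n^* S_n$. The representer theorem \eqref{eq: Representer Theorem 2} then yields $\hat{f}_{D, \lambda} = (T_n + \lambda)^{-1}\big(\tfrac{1}{n}\sum_i Y_i \phi_{X_i}\big)$, and I would adopt the three-term decomposition
\begin{equation*}
    \hat{f}_{D, \lambda} - f^* \;=\; -\lambda (T_n + \lambda)^{-1} f^* \;+\; (T_n + \lambda)^{-1} \mathcal{A}_n \;+\; (T_n + \lambda)^{-1} \mathcal{N}_n,
\end{equation*}
where $\mathcal{A}_n = \tfrac{1}{n}\sum_i (f^*(X_i) - P_{X_i, n} f^*) \phi_{X_i}$ captures the bias from substituting capacitary means for pointwise values, and $\mathcal{N}_n = \tfrac{1}{n}\sum_i \epsilon_i \phi_{X_i}$ is the noise term.

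Three key estimates drive the argument. First, from $\mathcal{E}(e_{x, n}, e_{x, n}) = \mathrm{cap}(\mathcal{O}_{x, n}) = \gamma_n^{-1}$, I get the uniform bound $\|\phi_x\|_{\mathcal{H}}^2 = \gamma_n$. Second, the combination of \eqref{eq: VD}, \eqref{eq: Poincare Inequality} and \eqref{eq: Exit Upper}/\eqref{eq: Exit Lower} yields the two-sided Green function estimate $G(x, y) \asymp \Psi(d(x, y))/V(x, d(x, y))$, so the obstacle $\mathcal{O}_{x, n}$ has effective radius $r_n$ satisfying $\Psi(r_n)/V(x, r_n) \asymp \gamma_n^{-1}$, giving $r_n \asymp \gamma_n^{-1/(\alpha - \beta)}$ under \eqref{eq: Growth Condition}. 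Applying the scale-invariant Poincar\'e inequality on a ball of radius $\asymp r_n$ containing $\mathcal{O}_{x, n}$, I would deduce the pointwise-to-capacitary approximation bound
\begin{equation*}
    \int_{\mathcal{X}} (f^*(x) - P_{x, n} f^*)^2 \, d\nu(x) \;\lesssim\; \Psi(r_n) \, \mathcal{E}(f^*, f^*).
\end{equation*}
Third, Bernstein-type concentration in Hilbert space, leveraging $\sup_x \|\phi_x\|_{\mathcal{H}}^2 \leq \gamma_n$, produces (up to logarithmic factors absorbed in $\delta^{-1}$) the bounds $\|T_n - T\|_{\mathrm{op}} \lesssim \sqrt{\gamma_n/n}$, $\|\mathcal{N}_n\|_{\mathcal{H}} \lesssim \rho\sqrt{\gamma_n/n}$, and a comparable bound on $\|\mathcal{A}_n\|_{\mathcal{H}}$, each holding with probability at least $1 - \delta$.

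To pass from the $\mathcal{H}$-level bounds to the desired $L^2(\nu)$-bound, I would invoke the elementary inequality $\|g\|_{L^2(\nu)}^2 \leq 2\|T^{1/2} g\|_{\mathcal{H}}^2 + 2\Psi(r_n)\,\mathcal{E}(g, g)$, which follows by writing the canonical embedding as $\iota g = Sg + (\iota g - Sg)$ and applying the approximation estimate above to $g$. Combining this with the standard contractions $\|T^{1/2}(T + \lambda)^{-1}\|_{\mathrm{op}} \leq (2\sqrt{\lambda})^{-1}$ and $\|(T + \lambda)^{-1}\|_{\mathrm{op}} \leq \lambda^{-1}$, and swapping $T$ for $T_n$ via the operator concentration bound, produces contributions of order $\lambda\|f^*\|_{\mathcal{H}}^2$, $\Psi(r_n)\|f^*\|_{\mathcal{H}}^2$, and $\gamma_n \rho^2/(n\lambda)$ respectively on the squared $L^2(\nu)$ error. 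The choices $\gamma_n \asymp n^{(\alpha - \beta)/\alpha}$ (so $\Psi(r_n) \asymp n^{-\beta/\alpha}$) and $\lambda \asymp n^{-\beta/(2\alpha)}$ then equalize all three contributions at $n^{-\beta/(2\alpha)}$, which is the stated rate.

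The main obstacle is the approximation estimate in the second ingredient: unlike the circle-averaging setting of Liouville quantum gravity, the capacitary mean $P_{x, n} f^*$ integrates against the equilibrium measure $\nu_{x, n}$ on a (potentially irregular) level set $\partial \mathcal{O}_{x, n}$ of the Green function, rather than over a geodesic sphere. Relating $\nu_{x, n}$ to the volume measure on a comparable ball, and invoking \eqref{eq: Poincare Inequality} to pass to an integral of $d\Gamma(f^*, f^*)$, will require a delicate use of the chain and Cauchy--Schwarz properties \eqref{eq: Chain Def}, \eqref{eq: CS Inequality} of the energy measure, together with \eqref{eq: Cutoff Sobolev} to localize to $\mathcal{O}_{x, n}$. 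This is where the analytic, geometric, and probabilistic assumptions on $(\mathcal{E}, \mathcal{H})$ interact most intricately.
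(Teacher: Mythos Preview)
Your overall strategy is sound and runs parallel to the paper's, with the key technical ingredient---the capacitary Poincar\'e inequality $\int (g(x) - P_{x,n} g)^2\,d\nu \preceq \gamma_n^{\beta/(\beta-\alpha)}\mathcal{E}(g,g)$---correctly identified as the crux (this is Theorem~\ref{Capacitary Poincare} in the paper, and the ``main obstacle'' you flag is precisely its content). Your decomposition differs from the paper's: rather than introducing the \emph{true} covariance $C_\nu$ (with integral kernel $G$) and splitting into bias $\|f^*-f_\lambda\|$, variance $\|\hat f_{D,\lambda}-\hat f_\lambda\|$, and an approximation term $\|f_\lambda-\hat f_\lambda\|$ comparing $C_\nu$ to $\hat C_\nu=T$, you work only with $T$ and $T_n$ and pass to $L^2(\nu)$ via the inequality $\|g\|_{L^2(\nu)}^2 \leq 2\|T^{1/2}g\|_{\mathcal{H}}^2 + 2\Psi(r_n)\,\mathcal{E}(g,g)$. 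Both routes ultimately rest on the same Poincar\'e-type estimate (together with its companion, Lemma~\ref{Average Projection}); yours is arguably more streamlined in that it avoids the extra $C_\nu$ layer.

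There are, however, two concrete gaps in the execution. First, you cannot invoke Bernstein for $\mathcal{N}_n$ or $\mathcal{A}_n$: the noise $\epsilon$ is only assumed to have a second moment (no boundedness or subexponentiality in Theorem~\ref{Main Theorem}), and $f^*\in\mathcal{H}$ need not be in $L^\infty$, so the summands $\epsilon_i\phi_{X_i}$ and $(f^*(X_i)-P_{X_i,n}f^*)\phi_{X_i}$ are unbounded in $\mathcal{H}$. The paper uses Chebyshev here (Lemma~\ref{Vector Concentration Main}), and this is precisely the origin of the $\delta^{-1}$ in \eqref{eq: Main Theorem Statement}---it is not a matter of ``absorbing log factors.'' Second, the unregularized bound $\|T_n-T\|_{\mathrm{op}}\lesssim\sqrt{\gamma_n/n}$ is too crude for the swap $T_n\leftrightarrow T$: with the stated scalings one has $\sqrt{\gamma_n/n}/\lambda\asymp 1$, so the Neumann-series argument does not close once the Bernstein $\log(1/\delta)$ factor is included. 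The remedy is to concentrate the \emph{regularized} deviation $(T+\lambda)^{-1/2}(T_n-T)(T+\lambda)^{-1/2}$ directly (as the paper does with $(C_\nu+\lambda)^{-1/2}$ in Lemma~\ref{Operator Concentration}), which yields a bound of order $\sqrt{\gamma_n/(n\lambda)}\asymp n^{-\beta/(4\alpha)}\to 0$. Finally, note that $\mathcal{A}_n$ is not centered; its mean $\int (f^*(x)-P_{x,n}f^*)\,\phi_x\,d\nu$ must be handled separately (cf.\ Lemma~\ref{Average Regularized Projection}).
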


\begin{remark}
In the specific case of $\mathcal{H} = \mathbb{H}^1_0(\mathbb{R}^d)$, with $\alpha = d$ and $\beta = 2$, Theorem \ref{Main Theorem} yields:

\begin{equation*}
    ||\hat{f}_{\lambda, D} - f^{*}||^2_{L^2(\nu)} \preceq \delta^{-1} n^{-\frac{2}{d+2}}
\end{equation*}
which exhibits the known optimal rate of $n^{-\frac{2}{d + 2}}$. \\

We emphasize that the primary benefit of random obstacle renormalization over the more commonly studied spectral series regression stems from the renormalization procedure being wholly agnostic to the data-generating measure (due to the invariance of the Green's function under time-change). Indeed, spectral series regression involves approximating the Green's function by truncating its Mercer expansion, and thereby requires full knowledge of Laplacian eigenvalues/eigenfunctions, which crucially depend on the unknown data generating measure. The random obstacle renormalization technique introduced here approximates the Green's functions via a simple truncation (see \eqref{Green Potential}) and hence does not require any spectral information on the ambient Laplacian. Moreover, our renormalization technique also appears to avoid the computational costs of spectral series regression --- it is well-known that when the latter is able to achieve optimal rates, the Mercer series must contain at least $n^{\frac{\alpha}{\alpha + \beta}}$ terms. We suspect that regularizer $\lambda$ plays a crucial role in achieving the optimal convergence rate in \eqref{eq: Main Theorem Statement} --- in section \ref{Random Obstacle ERM}, we consider pure empirical risk minimization (ERM) over the ``smoothed'' Dirichlet ball under an additional curvature assumption, and argue, using empirical process techniques, that unlike ridge regression in Theorem \ref{Main Theorem}, pure ERM overfits due to the ``massiveness'' of $\mathcal{H}$, leading to suboptimal sample complexity. 

\end{remark}

In order to understand how $\hat{f}_{D, \lambda}$ approaches the true mean function $f^{*}$ asymptotically, it is clear we must understand how the capacitary means $P_{x, n}g$ approach pointwise evaluations $g(x)$ in $L^2(\nu)$ --- this is the topic of the next section.

\subsection{Poincar\'e Inequality for Capacitary Means}
\label{Capacitary Poincare Section}
In this section, we derive the main technical machinery for the analysis of the MSE $||\hat{f}_{\lambda, D} - f^{*}||^2_{L^2(\nu)}$. In the following result, we derive a global Poincar\'e-type inequality for the capacitary means $P_{x, n}g$. Proofs are provided in Appendix \ref{Poincare Proofs}.

\begin{theorem}
\label{Capacitary Poincare}
Suppose \eqref{eq: VD}, \eqref{eq: RVD}, \eqref{eq: VD/RVD}, \eqref{eq: Poincare Inequality}, \eqref{eq: Exit Upper}, \eqref{eq: Exit Lower}, and \eqref{eq: Growth Condition} hold. Then, for every $g \in \mathcal{H}$, we have:
\begin{equation}
\label{eq: Capacitary Poincare}
    \int_{\mathcal{X}} (g(x) - P^{\gamma}_x g)^2 d\nu(x) \preceq \gamma^{\frac{\beta}{\beta - \alpha}} \mathcal{E}(g, g)
\end{equation}
where 
\begin{equation}
\label{eq: Projection Definition}
    P^{\gamma}_x g = \mathcal{E}(g, \gamma e_x) = \frac{1}{\nu_x(\partial \mathcal{O}_x)}\int g(y) d\nu_{x}(y)
\end{equation} and $e_x$ is the equilibrium potential for the obstacle $$\mathcal{O}_x = \{y \in \mathcal{M}: G(x, y) \geq \gamma\}$$ centered at $x$ and $\nu_x$ the corresponding equilibrium measure. 
\end{theorem}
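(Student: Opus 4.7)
My plan is to show that the capacitary mean $P^\gamma_x g$ is close in $L^2(\nu)$ to the ordinary $\mu$-ball average $g_{B(x, r_\gamma)}$ at the natural scale $r_\gamma \asymp \gamma^{1/(\beta - \alpha)}$ dictated by the Green function, and then to control $g(x) - g_{B(x, r_\gamma)}$ by a ``global'' version of the Poincar\'e inequality at that scale. Under \eqref{eq: VD}, \eqref{eq: RVD}, \eqref{eq: Poincare Inequality}, \eqref{eq: Exit Upper}, \eqref{eq: Exit Lower}, and \eqref{eq: Growth Condition}, the standard Grigor'yan--Telcs Green function estimates yield $G(x,y) \asymp \Psi(d(x,y))/V(x, d(x,y)) \asymp d(x,y)^{\beta - \alpha}$ at small scales, so the obstacle is sandwiched $B(x, c r_\gamma) \subset \mathcal{O}_x \subset B(x, C r_\gamma) =: B_x$ with $\Psi(r_\gamma) \asymp \gamma^{\beta/(\beta - \alpha)}$. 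Decomposing
$$g(x) - P^\gamma_x g = (g(x) - g_{2 B_x}) + (g_{2 B_x} - P^\gamma_x g),$$
I handle the first piece by a standard covering-plus-Poincar\'e argument: covering $\mathcal{X}$ by balls of radius $r_\gamma$ with bounded overlap and applying \eqref{eq: Poincare Inequality} on enlargements yields
$$\int_\mathcal{X} (g(x) - g_{B(x, r_\gamma)})^2 \, d\nu(x) \preceq \Psi(r_\gamma) \, \mathcal{E}(g, g) \asymp \gamma^{\beta/(\beta - \alpha)} \, \mathcal{E}(g, g).$$

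The heart of the proof is the capacitary-to-ball average comparison. For each $x$ I select a cutoff $\phi_x \in \text{cutoff}(B_x, 2 B_x)$ from \eqref{eq: Cutoff Sobolev} and set $u_x = (g - g_{2 B_x}) \phi_x \in \mathcal{H}$ (after truncating $g$ if needed, and passing to the limit at the end). Since $\phi_x \equiv 1$ on the support $\partial \mathcal{O}_x$ of the equilibrium measure $\nu_x$, the identity $\mathcal{E}(h, e_x) = \int h \, d\nu_x$ for $h \in \mathcal{H}$ gives $\mathcal{E}(u_x, e_x) = (P^\gamma_x g - g_{2 B_x})/\gamma$, and combined with $\mathcal{E}(e_x, e_x) = \mathrm{cap}(\mathcal{O}_x) = 1/\gamma$, Cauchy--Schwarz in $\mathcal{E}$ yields
$$(P^\gamma_x g - g_{2 B_x})^2 \leq \gamma \, \mathcal{E}(u_x, u_x).$$
To bound $\mathcal{E}(u_x, u_x)$ I use the Leibniz rule \eqref{eq: Leibniz Def} and the energy-measure Cauchy--Schwarz \eqref{eq: CS Inequality} to reduce to $\int \phi_x^2 \, d\Gamma(g, g) + \int (g - g_{2 B_x})^2 \, d\Gamma(\phi_x, \phi_x)$; the first summand is at most $\int_{2 B_x} d\Gamma(g, g)$, and applying \eqref{eq: Cutoff Sobolev} to the second followed by \eqref{eq: Poincare Inequality} on $2 B_x$ exposes a cancellation between the $1/\Psi(r_\gamma)$ factor from \eqref{eq: Cutoff Sobolev} and the $\Psi(r_\gamma)$ factor from \eqref{eq: Poincare Inequality}, leaving $\mathcal{E}(u_x, u_x) \preceq \int_{C' B_x} d\Gamma(g, g)$.

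Finally I integrate over $x \in \mathcal{X}$ and swap integrals via Fubini:
$$\int_\mathcal{X} (P^\gamma_x g - g_{2 B_x})^2 \, d\nu(x) \preceq \gamma \int_\mathcal{M} \nu\bigl(B(y, C' r_\gamma)\bigr) \, d\Gamma(g, g)(y) \preceq \gamma \, r_\gamma^\alpha \, \mathcal{E}(g, g),$$
which by the exponent identity $1 + \alpha/(\beta - \alpha) = \beta/(\beta - \alpha)$ collapses to $\gamma^{\beta/(\beta - \alpha)} \mathcal{E}(g, g)$; here the volume bound $\nu(B(y, r)) \preceq r^\alpha$ follows from the matching-exponent \eqref{eq: VD}/\eqref{eq: RVD} and the compactness of $\mathcal{X}$. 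The hard part will be the delicate exponent bookkeeping in the cutoff step --- the factors $\gamma$ from Cauchy--Schwarz, $1/\Psi(r_\gamma)$ from \eqref{eq: Cutoff Sobolev}, $\Psi(r_\gamma)$ from \eqref{eq: Poincare Inequality}, and $r_\gamma^\alpha$ from Fubini must conspire exactly --- together with the minor technical point that the cutoffs $\phi_x$ should be chosen jointly measurable in $x$ (by a standard selection argument) so that the Fubini swap is legitimate.
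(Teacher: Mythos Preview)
Your proposal is correct and follows essentially the same approach as the paper: the Green estimate $\mathcal{O}_x \subset B(x, K_1\gamma^{1/(\beta-\alpha)})$, the decomposition into a pointwise-to-ball-average term handled by \eqref{eq: Poincare Inequality} and a ball-average-to-capacitary-mean term handled by multiplying in a \eqref{eq: Cutoff Sobolev} cutoff, Cauchy--Schwarz against $e_x$ with $\mathcal{E}(e_x,e_x)=\gamma^{-1}$, Leibniz, and the $\Psi(r_\gamma)^{-1}$--$\Psi(r_\gamma)$ cancellation. The only cosmetic difference is that the paper sums over a fixed $r_\gamma$-net (comparing everything to $g_{B(x_i,r)}$ and invoking bounded overlap) whereas you integrate directly over $x$ and swap via Fubini---which in fact sidesteps your stated measurability concern, since after the pointwise bound the cutoffs no longer appear in the integrand.
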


The reader may notice that, in light of \eqref{eq: Growth Condition}, Theorem \ref{Capacitary Poincare} resembles a global version of \eqref{eq: Poincare Inequality}, where local ball $\nu$-averages $g_{B}$ have been replaced by local capacitary means $P_x^{\gamma}g$. One might wonder why we don't just use a local ball $\nu$-average $g_{B(X_i, r_n)}$ instead of $P_{i, n}g$ in \eqref{eq: Well-Posed} and apply \eqref{eq: Poincare Inequality} directly. However, it is impossible to encode the ball $\nu$-average as a functional on $\mathcal{H}$ without \textit{a priori} knowledge of the sampling measure $\nu$, which is typically unknown. \\ 

Nevertheless, our approach to proving Theorem \ref{Capacitary Poincare} heavily engages \eqref{eq: Poincare Inequality}. Indeed, we decompose the left hand side of \eqref{eq: Capacitary Poincare} into two terms --- one comparing $g(x)$ to $g_{B(x, r)}$ and another comparing $g_{B(x, r)}$ to $P_x^{\gamma}g$, where here $r$ is chosen so that $B(x, r)$ and $\mathcal{O}_x$ are of comparable size (the latter of which is quantified using upper estimates on the Green function guaranteed by \eqref{eq: VD}, \eqref{eq: Poincare Inequality}, and \eqref{eq: Exit Upper}/\eqref{eq: Exit Lower}). While the first deviation (between $g(x)$ to $g_{B(x, r)}$) can be readily estimated using \eqref{eq: Poincare Inequality}, estimation of the second deviation (between $g_{B(x, r)}$ to $P_x^{\gamma}g$) is more delicate and involves a careful localization argument and the application of \eqref{eq: Cutoff Sobolev}. When compared to \eqref{eq: Poincare Inequality} and in light of \eqref{eq: Growth Condition}, \eqref{eq: Capacitary Poincare} suggests that the ``radius'' of $\mathcal{O}_{x}$ is comparable to $\gamma^{\frac{1}{\beta - \alpha}}$ --- in fact they are \textit{equivalent} (up to a constant factor) when $\mathcal{H} = \mathbb{H}_0^1(\mathbb{R}^d)$ where $\alpha = d$ and $\beta = 2$ (see Example \ref{ex: Euclidean Sobolev example}). \\

We also derive an estimate on the ``second moment'' of the capacitary means $P^{\gamma}_{x}g$. The following result demonstrates that the latter is indeed comparable to $||g||^2_{L^2(\nu)}$ with a slight error depending on $\mathcal{E}(g, g)$ that vanishes as $\gamma \to \infty$. The proof of Lemma \ref{Average Projection} below is similar to that Theorem \ref{Capacitary Poincare} with some additional considerations necessary at the boundary of $\mathcal{X}$. 
\begin{lemma}
\label{Average Projection}
Assume the hypotheses of Theorem \ref{Capacitary Poincare}. Then for $g \in \mathcal{H}$ and sufficiently large $\gamma > 0$, we have that:
\begin{equation*}
    \int_{\mathcal{X}} (P^{\gamma}_x g)^2 d\nu(x) \preceq \gamma^{\frac{\beta}{\beta - \alpha}} \mathcal{E}(g, g) + ||g||^2_{L^2(\mathcal{X}, \nu)}
\end{equation*}
\end{lemma}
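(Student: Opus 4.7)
\medskip

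\noindent\textbf{Proof plan for Lemma \ref{Average Projection}.} The plan is to reduce the bound directly to Theorem \ref{Capacitary Poincare} via an elementary triangle inequality. For each $x \in \mathcal{X}$ (working with the quasi-continuous representative so that $g(x)$ is well-defined q.e., hence $\nu$-a.e. since $\nu \ll \mu$) I would decompose
\begin{equation*}
P^{\gamma}_x g \;=\; g(x) \;+\; \bigl(P^{\gamma}_x g - g(x)\bigr),
\end{equation*}
and apply the pointwise inequality $(a+b)^2 \leq 2a^2 + 2b^2$ to obtain
\begin{equation*}
(P^{\gamma}_x g)^2 \;\leq\; 2\, g(x)^2 \;+\; 2\bigl(P^{\gamma}_x g - g(x)\bigr)^2.
\end{equation*}
Integrating over $\mathcal{X}$ against $\nu$, the first term is exactly $2\|g\|^2_{L^2(\mathcal{X}, \nu)}$, and the second is the quantity bounded by Theorem \ref{Capacitary Poincare} by $C\gamma^{\beta/(\beta-\alpha)}\mathcal{E}(g,g)$. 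Summing the two yields the claim at once. In this reduction there is no serious obstacle; one only needs that the quasi-continuous modification makes the identity $g(x) + (P^{\gamma}_xg - g(x)) = P^{\gamma}_x g$ hold off a set of capacity zero, which is $\nu$-null since $\nu$ is absolutely continuous with respect to $\mu$.

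The author's remark that the proof ``is similar to Theorem \ref{Capacitary Poincare} with additional considerations at the boundary of $\mathcal{X}$'' suggests an alternative proof that does not use Theorem \ref{Capacitary Poincare} as a black box but instead parallels its derivation, replacing the pointwise term $g(x)$ with a local ball average $g_{B(x, r(\gamma))}$ (where $r(\gamma) \asymp \gamma^{1/(\beta-\alpha)}$ is the ``effective radius'' of the obstacle $\mathcal{O}_x$, as identified in the discussion following Theorem \ref{Capacitary Poincare}). Writing $P^{\gamma}_x g = (P^{\gamma}_x g - g_{B(x, r(\gamma))}) + g_{B(x, r(\gamma))}$ and again applying $(a+b)^2 \leq 2a^2 + 2b^2$, the first squared difference is handled by the same localization/cutoff-Sobolev argument (using \eqref{eq: Poincare Inequality} and \eqref{eq: Cutoff Sobolev}) that appears in the proof of Theorem \ref{Capacitary Poincare}, while the second is bounded via Jensen by $\tfrac{1}{\mu(B(x, r(\gamma)))}\int_{B(x, r(\gamma))} g^2\, d\mu$ and then integrated using Fubini and \eqref{eq: VD}.

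The hard part in this direct route, and what I expect to be the only real obstacle, is precisely the boundary strip $\mathcal{S}_{\gamma} = \{x \in \mathcal{X} : d(x, \partial \mathcal{X}) \leq r(\gamma)\}$: for $x \in \mathcal{S}_{\gamma}$ the ball $B(x, r(\gamma))$ pokes out of $\mathcal{X}$, and the Fubini exchange naturally produces $\|g\|^2_{L^2(\mathcal{M}, \mu)}$ rather than $\|g\|^2_{L^2(\mathcal{X}, \nu)}$. I would handle this by appealing to the orthogonal decomposition \eqref{Decomposition of Extended Dirichlet Space} and the identification of $g$ with its harmonic extension in $\mathcal{M} \setminus \mathcal{X}$, estimating the mass contributed by this extension on a tubular neighborhood of $\partial \mathcal{X}$ using \eqref{eq: VD}, \eqref{eq: RVD}, and the energy bound $\mathcal{E}(g,g)$ (which can be absorbed into the $\gamma^{\beta/(\beta-\alpha)}\mathcal{E}(g,g)$ term). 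Either route produces the stated inequality, but the triangle-inequality reduction to Theorem \ref{Capacitary Poincare} is by far the cleanest and is the approach I would present.
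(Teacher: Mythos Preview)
Your primary proposal---the triangle-inequality reduction to Theorem \ref{Capacitary Poincare}---is correct and is genuinely simpler than what the paper does. The paper does \emph{not} invoke Theorem \ref{Capacitary Poincare} as a black box; instead it reruns the cutoff-Sobolev localization argument directly on $g$ (rather than on $g - g_{B(x_i,r)}$), obtaining
\[
\int_{\mathcal{X}} (P^{\gamma}_x g)^2\, d\mu \;\preceq\; \int_{\mathcal{X}_{3r}} g^2\, d\mu \;+\; \Psi(r)\int_{\mathcal{X}_{3\sigma^{-1}r}} d\Gamma(g,g),
\]
and then spends the bulk of the argument controlling the overspill $\int_{\mathcal{X}_{3r}\setminus\mathcal{X}} g^2\, d\mu$ in terms of $\int_{\mathcal{X}} g^2\, d\mu$ and $\mathcal{E}(g,g)$ via a double-net construction (one net in a collar inside $\mathcal{X}$, one in the exterior collar) combined with \eqref{eq: Poincare Inequality} and \eqref{eq: VD}. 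This is essentially your ``alternative'' route, though the paper's boundary treatment is purely metric (nets + Poincar\'e) and does not appeal to the harmonic extension/orthogonal decomposition you suggest. Your black-box reduction bypasses all of this boundary bookkeeping entirely: once Theorem \ref{Capacitary Poincare} is in hand, the inequality $(P^{\gamma}_x g)^2 \le 2g(x)^2 + 2(P^{\gamma}_x g - g(x))^2$ integrated over $\mathcal{X}$ gives the result in one line, with no boundary issue arising because both integrals already live on $\mathcal{X}$. The only thing the paper's direct proof buys is logical independence from Theorem \ref{Capacitary Poincare}, which is of no real value here since Theorem \ref{Capacitary Poincare} precedes the lemma anyway.
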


\subsection{Convergence Analysis: Proving Theorem \ref{Main Theorem}}
We now discuss the proof of Theorem \ref{Main Theorem}. While the majority of the proof is relegated to appendix \ref{Main Proof} --- here we briefly overview the proof strategy and the main components of the MSE $||\hat{f}_{\lambda, D} - f^{*}||^2_{L^2(\nu)}$. We first define the actions of some operators that will be useful for decomposing our error:
\begin{align}
    C_{\nu}h(x) & \equiv \int G(x, y)h(y)d\nu(y) \label{eq: Covariance Kernel} \\
    \hat{C}_{\nu}h(x) & \equiv \int \gamma_n e_{y, n}(x)\mathcal{E}(\gamma_n e_{y, n}, h) d\nu(y) \label{eq: Cutoff Covariance Kernel} \\
    \hat{C}_{D}h(x) & \equiv \frac{\gamma_n}{n}\sum_{i = 1}^n e_{X_i, n}(x)\mathcal{E}(\gamma_n e_{X_i, n}, h) \label{eq: Sample Cutoff Covariance Kernel}
\end{align}
for all $h \in \mathcal{H}$. Observe that $\hat{C}_{D}$ is simply the empirical version of the population \textit{cutoff covariance} $\hat{C}_{\nu}$ (the subscript denotes dependence on the dataset $\mathcal{D}$). Here, the integral in \eqref{eq: Cutoff Covariance Kernel} is interpreted in the Bochner sense.  \\

We also define the following approximations of the regression function $f^{*} \in \mathcal{H} \cap L^2(\nu)$:
\begin{align}
    \hat{f}_{D, \lambda} \equiv (\hat{C}_D + \lambda)^{-1} \frac{\gamma_n}{n}\sum_{i = 1}^n Y_i e_{i, n} \label{eq: Sample Estimator} \\
    \hat{f}_{\lambda} \equiv (\hat{C}_{\nu} + \lambda)^{-1} \hat{C}_{\nu}f^{*} \label{eq: Regularized Cutoff} \\
    f_{\lambda} \equiv (C_{\nu} + \lambda )^{-1} C_{\nu}f^{*} \label{eq: Regularized True}
\end{align}
Throughout the sequel we will sometimes consider both $L^2(\nu)$-norms and $\mathcal{H}$ norms of the functions above. While by definition, these functions lie in $\mathcal{H}$, we reuse the same notation to denote their equivalence class in $L^2(\nu)$, when the setting is clear from context. As usual, we always assume we are working with a fixed quasi-continuous representative of a given function. \\
\subsubsection{Bias, Variance, and Approximation}
We can now decompose the mean-square error as:
\begin{equation}
\label{eq: Three Part Breakdown}
    ||f^{*} - \hat{f}_{D, \lambda}||^2_{L^2(\nu)} \leq 4\Big(\underbrace{||f^{*} - f_{\lambda}||^2_{L^2(\nu)}}_{\text{bias}} + \underbrace{||\hat{f}_{D, \lambda} - \hat{f}_{\lambda}||^2_{L^2(\nu)}}_{\text{variance}} + \underbrace{||f_{\lambda} - \hat{f}_{\lambda}||^2_{L^2(\nu)}}_{\text{approximation}}\Big)
\end{equation}
We expound on each of these components below:
\begin{itemize}
\item \textbf{Bias}: The first term in \eqref{eq: Three Part Breakdown} is the ordinary regularization bias we incur due to the ridge penalty
\item \textbf{Variance}: The second term in \eqref{eq: Three Part Breakdown} is the variance, which characterizes the concentration of the empirical cutoff covariance around its population counterpart. Due to our cutoff renormalization of the Green's kernel (see section \ref{Green Truncation}), both covariance operators are bounded over the Hilbert space $\mathcal{H}$ and their concentration in $||\cdot||_{\mathcal{H}}$ can be studied using standard techniques. However, relating $||\cdot||_{L^2(\nu)}$ and $||\cdot||_{\mathcal{H}}$ involves the \textit{true} covariance operator $C_{\nu}$ (see \eqref{eq: Covariance Reproducing}) and hence we will also have to employ the capacitary Poincar\'e inequality (Theorem \ref{Capacitary Poincare}) to compare $C_{\nu}$ to $\hat{C}_{\nu}$
\item \textbf{Approximation} While the bias compares $f^{*}$ and $f_{\lambda}$ and the variance $\hat{f}_{D, \lambda}$ and $\hat{f}_{\lambda}$, there is still a gap between $f_{\lambda}$ and $\hat{f}_{\lambda}$ that stems from the cutoff approximation to the Green's kernel (renormalization). This error characterizes the crux of our approach and we employ the Poincar\'e inequalities of section \ref{Capacitary Poincare} in full force. 
\end{itemize}

The ridge bias in the first term will be controlled using a standard spectral argument and exhibits the classical linear decay in $\lambda$. The estimation of the variance and approximation error are significantly more involved. As we might expect, the variance increases in the presence of insufficient regularization in either the Green function (choosing $\gamma$ too large) or ridge penalty (choosing $\lambda$ to small). Perhaps, the most interesting term is the final approximation error, which responds differently to each type of regularization. Indeed, this term behaves like a bias with respect to Green function renormalization, encouraging a higher $\gamma$ (and hence smaller obstacles), while advocating for the shrinkage provided by the ridge (i.e. higher $\lambda$). The latter phenomenon reflects the cooperation between the two regularizers, with shrinked elements of $\mathcal{H}$ being easier to approximate. 


\section{Random Obstacle ERM}
\label{Random Obstacle ERM}
In the final section, we consider the role of random obstacle renormalization in pure empirical risk minimization, where we are interested in solving the following constrained least-squares problem:

\begin{equation}
\label{Pure ERM}
    \hat{f}_{D} = \arg \min_{g \in \mathcal{H}_{M, \gamma_n}} \frac{1}{n}\sum_{i = 1}^n (Y_i - g(X_i))^2
\end{equation}
where:
\begin{equation}
\label{eq: Reg Function Class}
    \mathcal{H}_{M, \gamma_n} = \{\tilde{h}(x) \equiv \mathcal{E}(h, \gamma_n e_x^{\gamma_n}): h \in M\mathbb{B}(\mathcal{H})\}
\end{equation}
is the ``smoothed'' Dirichlet $M$-ball obtained by renormalizing the elements via the equilibrium measure ($\gamma_n$ is a data-driven threshold as before and $e_{x}^{\gamma_n}$ is the equilibrium potential of the obstacle $\mathcal{O}_{x, n}(\gamma_n)$ in \eqref{eq: Obstacle Def}). We note this regularization transforms $M\mathbb{B}(\mathcal{H})$ into a bounded subset of $L^{\infty}$. In order to fully avail ourselves to empirical process tools, we additionally need the following curvature assumption on the metric measure Dirichlet space. For $h \in \mathcal{H}$, let $|\nabla h| = \sqrt{\frac{d\Gamma(h, h)}{d\mu}}$. We suppose:
\begin{equation}
\tag{BKE}
\label{eq: Bakry-Emery}
    |\nabla P_t h(x)|^2 \leq CP_{ct} |\nabla h|^2(x)
\end{equation}
for all $h \in \mathcal{H}$, $x \in \mathcal{M}$, and $t \geq 0$ and some $C, c > 0$. Following the nomenclature in \cite{coulhon2020gradient}, we call \eqref{eq: Bakry-Emery} the generalized Bakry-Emery condition. Geometrically, \eqref{eq: Bakry-Emery} implies that the MMD space $(\mathcal{M}, \mu, d, \mathcal{E})$ has nonnegative curvature in the Bakry-Emery sense, and reduces to the standard Bakry-Emery condition when $c = 1$. Note \eqref{eq: Bakry-Emery} implies that the energy measure $\Gamma(h, h)$ is absolutely continuous with respect to the ambient measure $\mu$, and hence rules out the fractal setting \cite{kajino2020singularity}. The assumption \eqref{eq: Bakry-Emery} is needed to ensure the local Lipschitzness of harmonic functions \cite{coulhon2020gradient}, which will be critical in studying the modulus of continuity of the Gaussian process indexed by \eqref{eq: Reg Function Class}. We also introduce the intrinsic metric $d_{\mathcal{E}}$ on $(\mathcal{M}, \mu, \mathcal{E})$:
\begin{equation}
\label{eq: Intrinsic Metric}
    d_{\mathcal{E}}(x, y) = \sup \{f(x) - f(y): f \in \mathcal{H}, \nabla f \leq 1 \hspace{2mm} a.e. \}
\end{equation}
for $x, y \in \mathcal{M}$. We also assume $\beta = 2$ in \eqref{eq: Growth Condition}, i.e. there exists a $r_0 \in (0, 1)$ such that for all $r \in (0, r_0)$:
\begin{equation}
\label{eq: Walk Dimension 2}
    \Psi(r) \asymp r^2
\end{equation}
Note that \eqref{eq: Walk Dimension 2} comes with minimal loss of generality, as \eqref{eq: Bakry-Emery} requires that $\frac{\Psi(r\lambda^{-1})}{\Psi(r)} \succeq \lambda^{-2}$ as $r \to 0$ and $\lambda \to \infty$ by Theorem 2.13(a) in \cite{kajino2020singularity}, while the combination of \eqref{eq: Poincare Inequality}, \eqref{eq: VD}, and \eqref{eq: Cutoff Sobolev} implies that $\frac{\Psi(r)}{\Psi(r\lambda^{-1})} \succeq \lambda^{2}$ by Theorem 1.6 in \cite{murugan2020length}. Furthermore, by Theorem 2.13(b) in \cite{kajino2020singularity}, we have the bi-Lipschitz equivalence of $d_{\mathcal{E}}$ and $d$, i.e. there exists a $r_2 > 0$ and constant $C_5 > 1$ such that:
\begin{equation}
\label{Metrics are Equivalent}
    C_5^{-1}d(x, y) \leq d_{\mathcal{E}}(x, y) \leq C_5d(x, y)
\end{equation}
for $d(x, y) \wedge d_{\mathcal{E}}(x, y) \leq r_2$. When $(\mathcal{M}, d)$ is a length space (see e.g. \cite{murugan2020length} for a definition) \eqref{Metrics are Equivalent} holds with $r_2 = \infty$. In this section, we will always assume $(\mathcal{M}, d)$ is a length space for convenience. \\

For simplicity, in the sequel, we will suppose $M = 1$ in \eqref{eq: Reg Function Class} and denote $\mathcal{H}_{1, \gamma}$ as simply $\mathcal{H}_{\gamma}$. The crucial auxiliary result in this section is the following upper estimate for the metric entropy of $\mathcal{H}_{\gamma}$:

\begin{proposition}
\label{Covering Numbers}
\begin{equation*}
    \log \mathcal{N}(\epsilon, \mathcal{H}_{\gamma}, \|\cdot\|_{\infty}) \preceq \gamma^{\frac{\alpha(\alpha - 1)}{(\alpha + 1)(\alpha - 2)}}\epsilon^{-\frac{2\alpha}{\alpha + 1}}
\end{equation*}
where $\mathcal{N}(\epsilon, \mathcal{H}_{\gamma}, \|\cdot\|_{\infty})$ is the $\epsilon$-covering number of $\mathcal{H}_{\gamma}$ in the $L^{\infty}(\mathcal{X})$ norm.
\end{proposition}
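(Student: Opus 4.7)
The strategy is to exploit the regularity that $\tilde{h}(x) = P_x^\gamma h$ inherits from being a codimension-one average of $h$ against the equilibrium measure $\nu_x$ concentrated on $\partial \mathcal{O}_x$. In the Euclidean prototype $\mathcal{H} = \mathbb{H}_0^1(\mathbb{R}^d)$, the obstacle is a ball $B(x, \rho)$ of radius $\rho \asymp \gamma^{-1/(\alpha-2)}$ and $\tilde{h}$ is simply the spherical average $h \ast \tilde\sigma_\rho$; a direct Fourier computation using $|\widehat{\tilde\sigma_\rho}(\xi)| \preceq (1+\rho|\xi|)^{-(\alpha-1)/2}$ gives
\[
\|\tilde{h}\|_{H^{(\alpha+1)/2}} \preceq \rho^{-(\alpha-1)/2}\|h\|_{H^1} \preceq \gamma^{(\alpha-1)/(2(\alpha-2))}\|h\|_{\mathcal{H}}.
\]
Since $(\alpha+1)/2 > \alpha/2$, the standard Sobolev entropy bound (see e.g.\ \cite{edmunds1996function}) in the form $\log \mathcal{N}(\epsilon, R\cdot\mathbb{B}(H^{(\alpha+1)/2}), \|\cdot\|_\infty) \preceq (R/\epsilon)^{2\alpha/(\alpha+1)}$ applied with $R \asymp \gamma^{(\alpha-1)/(2(\alpha-2))}$ reproduces exactly the claimed $\gamma^{\alpha(\alpha-1)/((\alpha+1)(\alpha-2))}\epsilon^{-2\alpha/(\alpha+1)}$ scaling. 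The task is to transfer this Euclidean intuition to the general MMD setting through the spectral calculus of the generator.

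The first step is to estimate the Lipschitz modulus of the representers in $\mathcal{H}$. Using $\gamma e_x^\gamma(\cdot) = G(x, \cdot) \wedge \gamma$, the two-sided Green function estimate $G(x, y) \asymp d(x, y)^{-(\alpha-2)}$ (which follows from \eqref{eq: VD}, \eqref{eq: Poincare Inequality}, \eqref{eq: Exit Upper}, \eqref{eq: Exit Lower} and Theorem 1.2 of \cite{grigor2015generalized}), and the pointwise gradient estimate on $G$ supplied by \eqref{eq: Bakry-Emery}, one computes, via the Leibniz and Cauchy--Schwarz rules \eqref{eq: Leibniz Def}, \eqref{eq: CS Inequality} for the energy measure,
\[
\|\gamma e_x^\gamma - \gamma e_y^\gamma\|_{\mathcal{H}}^2 \preceq d(x,y)^2\, \gamma^{\alpha/(\alpha-2)}
\]
for $d(x, y) \leq \rho$. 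By Cauchy--Schwarz in $\mathcal{E}$, $\tilde{h}$ is then Lipschitz with constant $L \preceq \gamma^{\alpha/(2(\alpha-2))}$, and Lemma \ref{Average Projection} bounds $\|\tilde h\|_{L^2(\nu)}$. Used on its own with the Gagliardo--Nirenberg-type interpolation $\|f\|_\infty \preceq \|f\|_{L^2(\nu)}^{2/(\alpha+2)} L^{\alpha/(\alpha+2)}$, this Lipschitz estimate would only yield an $L^\infty$ entropy of order $\epsilon^{-\alpha}$, strictly weaker than the target. The $(\alpha-1)/2$ orders of extra smoothness due to codimension-one averaging must be captured separately.

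The second step is a spectral upgrade realizing this extra smoothness. The kernel $K_\gamma(x, y) = \mathcal{E}(\gamma e_x^\gamma, \gamma e_y^\gamma) \asymp G(x, y) \wedge \gamma$ admits, under \eqref{eq: Walk Dimension 2}, the representation $K_\gamma(x, y) \asymp \int_{t_0}^\infty p_t(x,y)\,dt$ with $t_0 \asymp \rho^2 = \gamma^{-2/(\alpha-2)}$. Under \eqref{eq: VD}, \eqref{eq: Poincare Inequality}, \eqref{eq: Exit Upper}, \eqref{eq: Exit Lower} the heat kernel obeys two-sided Gaussian bounds (Theorem 1.2 of \cite{grigor2015generalized}), so the eigenvalues of the cutoff covariance $\hat{C}_\nu$ decay like $k^{-2/\alpha}$ up to an effective cutoff $k^\ast \asymp \gamma^{\alpha/(\alpha-2)}$ and are exponentially suppressed thereafter (the sum remains $\preceq \gamma$ as required by the trace identity $\int K_\gamma(x,x)\,d\nu = \gamma$). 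This realizes the smoothing map $h \mapsto \tilde{h}$ as the composition of the embedding $\mathcal{H} \hookrightarrow L^2(\nu)$ with a fractional power of the heat resolvent of order $(\alpha+1)/4$, placing $\mathcal{H}_\gamma$ inside a ball of radius $R \preceq \gamma^{(\alpha-1)/(2(\alpha-2))}$ in the domain of $(-\Delta+I)^{-(\alpha+1)/4}$. Classical entropy numbers for such fractional Sobolev embeddings \cite{edmunds1996function} then yield the stated bound.

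The principal obstacle is the second step: in a general MMD setting higher-order Sobolev spaces and their $L^\infty$ embeddings are not defined a priori and must be built from the functional calculus of $-\Delta$ together with the pointwise heat-kernel derivative bounds it furnishes. Here \eqref{eq: Bakry-Emery} is pivotal: it provides the gradient estimates for $P_t$ that underlie the Sobolev embedding, and, as noted after \eqref{eq: Walk Dimension 2}, it forces the walk dimension to be exactly $2$ so that the heat semigroup smooths at the Euclidean rate, which is what makes the Sobolev entropy exponent $2\alpha/(\alpha+1)$ attainable with the correct power of $\gamma$.
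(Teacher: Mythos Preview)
Your route is genuinely different from the paper's, and the crucial step is left as an assertion.

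The paper never introduces higher-order Sobolev spaces. Instead it proves the Proposition via Gaussian small-ball probabilities. It first establishes (Lemma~\ref{Modulus of Continuity}) the H\"older modulus
\[
\gamma^2\mathcal{E}(e_x^\gamma-e_y^\gamma,e_x^\gamma-e_y^\gamma)\preceq \gamma^{\frac{\alpha-1}{\alpha-2}}\,d(x,y),
\]
using \eqref{eq: Bakry-Emery} only to invoke a Li--Yau gradient estimate on the harmonic function $G(z,\cdot)$. This controls the canonical distance $d_\gamma$ of the Gaussian field $h_x=\mathcal{E}(h,\gamma e_x^\gamma)$, $h\sim\lambda_{\mathcal{H}}$. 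By \eqref{eq: VD} the $d_\gamma$-covering number of $\mathcal{X}$ is then $\preceq \gamma^{\alpha(\alpha-1)/(\alpha-2)}\epsilon^{-2\alpha}$. Talagrand's chaining with Sidak's inequality turns this into a \emph{lower} bound on the small-ball probability $\lambda_{\mathcal{H}}(K_\gamma(\epsilon))$, and the Kuelbs--Li duality \cite{kuelbs1993metric} converts that into the claimed \emph{upper} bound on $\log\mathcal{N}(\epsilon,\mathcal{H}_\gamma,\|\cdot\|_\infty)$. Everything is abstract Hilbert-space and Gaussian-measure machinery; no functional calculus of $-\Delta$ beyond order one is needed.

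Your ``spectral upgrade'' is where the gap lies. You correctly observe that the bare Lipschitz/H\"older control on $x\mapsto\gamma e_x^\gamma$ only gives $\epsilon^{-\alpha}$, and that the extra $(\alpha-1)/2$ derivatives from codimension-one averaging must be captured. In $\mathbb{R}^d$ your Fourier computation is fine and reproduces the exponents. But in the general MMD setting you (i) assert that $h\mapsto\tilde h$ factors through a fractional heat resolvent of order $(\alpha+1)/4$, and (ii) invoke $\log\mathcal{N}(\epsilon, R\mathbb{B}(H^{(\alpha+1)/2}),\|\cdot\|_\infty)\preceq (R/\epsilon)^{2\alpha/(\alpha+1)}$. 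Neither is justified. The identification (i) is not obvious: the map $h\mapsto P_x^\gamma h$ averages against $x$-dependent equilibrium measures and is not a priori a spectral multiplier in $-\Delta$; the heuristic $K_\gamma\asymp\int_{t_0}^\infty p_t$ gives information about $\hat C_\nu$ as an operator on $L^2(\nu)$, not about $\tilde h$ as an element of $\mathrm{dom}((-\Delta+I)^{(\alpha+1)/4})$. And (ii) is simply not in \cite{edmunds1996function} for metric measure Dirichlet spaces---that reference treats Euclidean domains. Proving the $L^\infty$-entropy of spectrally defined $H^s$ balls on a general MMD space from \eqref{eq: VD}, \eqref{eq: Poincare Inequality}, \eqref{eq: Bakry-Emery} is itself a theorem of comparable depth to the Proposition, and the natural way to do it is precisely via small-ball probabilities and Kuelbs--Li. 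So your second step restates the difficulty rather than resolving it.

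A minor point: your Lipschitz estimate $\|\gamma e_x^\gamma-\gamma e_y^\gamma\|_{\mathcal{H}}^2\preceq d(x,y)^2\gamma^{\alpha/(\alpha-2)}$ is stronger than what the paper proves (H\"older-$1/2$) and is not substantiated by the appeal to Leibniz/Cauchy--Schwarz; but since you immediately discard it as insufficient, this does not affect the main line.
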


The proof of Proposition \ref{Covering Numbers} is provided in Appendix \ref{ERM Proofs}. Our approach involves studying the Gaussian measure $\lambda_{\mathcal{H}}$ with Cameron-Martin space $\mathcal{H}$, and exploiting the interaction \cite{kuelbs1993metric} between metric entropy and the small ball probabilities $\lambda_{\mathcal{H}}(K_{\gamma}(\epsilon))$, where $K_{\gamma}(\epsilon)$ is the convex body:
\begin{equation}
\label{eq: Regularized Sup Ball}
K_{\gamma}(\epsilon) = \{h \in \mathcal{H}: \sup_{x \in \mathcal{X}} \mathcal{E}(h, \gamma e^{\gamma}_{x}) \leq \epsilon\}
\end{equation}
We estimate $\lambda_{\mathcal{H}}(K_{\gamma}(\epsilon))$ using a classical technique of Talagrand \cite{talagrand1993new} involving chaining and Sidak's inequality for Gaussian measures. We suspect that Proposition \ref{Covering Numbers} and its proof may be of independent interest, and potentially useful in the studying thick point sets of Gaussian free fields over general metric spaces (in the spirit of \cite{hu2010thick} for the 2D Euclidean free field). 

Equipped with control of the metric entropy, we readily derive the following risk bound:

\begin{theorem}
\label{ERM Risk}
Suppose \eqref{eq: VD}, \eqref{eq: RVD}, \eqref{eq: VD/RVD}, \eqref{eq: Poincare Inequality}, \eqref{eq: Exit Upper}, \eqref{eq: Exit Lower}, \eqref{eq: Bakry-Emery}, and \eqref{eq: Walk Dimension 2} hold. Further assume $\|f^{*}\|_{L^{\infty}(\mathcal{X})} < \infty$ and that noise is subgaussian $\epsilon \sim \text{SG}(\rho^2)$ in \eqref{eq: Additive Noise Model}. Then, if $f^{*} \in \mathbb{B}(\mathcal{H})$ and $\gamma_n \asymp n^{\frac{\alpha - 2}{2\alpha}}$:
\begin{equation}
\label{eq: Pure ERM Consistency}
    ||\hat{f}_{D} - f^{*}||^2_{L^2(\nu)} \preceq \log \delta^{-1} \cdot n^{-\frac{1}{\alpha}}
\end{equation}
with probability $1 - 2\delta$.
\end{theorem}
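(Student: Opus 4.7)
The plan is to split the $L^2(\nu)$ risk into an approximation and an estimation component, handle each with the tools already developed, and then choose $\gamma_n$ so that the approximation piece hits the target rate. Let $\tilde{f}^{*}(x) = \mathcal{E}(f^{*}, \gamma_n e_x^{\gamma_n})$, which lies in $\mathcal{H}_{\gamma_n}$ since $f^{*}\in \mathbb{B}(\mathcal{H})$. Then
\begin{equation*}
\|\hat{f}_{D} - f^{*}\|^2_{L^2(\nu)} \;\leq\; 2\|\hat{f}_{D} - \tilde{f}^{*}\|^2_{L^2(\nu)} \;+\; 2\|\tilde{f}^{*} - f^{*}\|^2_{L^2(\nu)}.
\end{equation*}
Under \eqref{eq: Walk Dimension 2} we have $\beta = 2$, so Theorem \ref{Capacitary Poincare} immediately controls the second (approximation) term: $\|\tilde{f}^{*} - f^{*}\|^2_{L^2(\nu)} \preceq \gamma_n^{-2/(\alpha-2)} \mathcal{E}(f^{*}, f^{*}) \leq \gamma_n^{-2/(\alpha-2)}$.

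For the first (estimation) term I would invoke the standard ERM basic inequality, noting that $\hat{f}_D$ is the empirical minimizer over $\mathcal{H}_{\gamma_n}$ and $\tilde{f}^{*}\in \mathcal{H}_{\gamma_n}$. This reduces the analysis to uniform control, over a localized shell in $\mathcal{H}_{\gamma_n}$, of the noise empirical process $g \mapsto \tfrac{1}{n}\sum \epsilon_i (g - \tilde{f}^{*})(X_i)$ and the ratio $|\|g - \tilde{f}^{*}\|_n^2 - \|g - \tilde{f}^{*}\|^2_{L^2(\nu)}|$. Two ingredients are essential. First, Cauchy--Schwarz together with $\mathcal{E}(\gamma_n e_x^{\gamma_n}, \gamma_n e_x^{\gamma_n}) = \gamma_n^{2}\,\mathrm{cap}(\mathcal{O}_x) = \gamma_n$ yields the uniform bound $\|g\|_{\infty} \leq \gamma_n^{1/2}$ for every $g\in \mathcal{H}_{\gamma_n}$, which is exactly what is needed for Bernstein's inequality to apply at each level of the chain. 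Second, Proposition \ref{Covering Numbers} provides the $L^{\infty}$-entropy $\log \mathcal{N}(\epsilon, \mathcal{H}_{\gamma_n}, \|\cdot\|_{\infty}) \preceq \gamma_n^{a}\epsilon^{-p}$ with $a = \alpha(\alpha-1)/((\alpha+1)(\alpha-2))$ and $p = 2\alpha/(\alpha+1) < 2$. Because the noise is sub-exponential, I would then run Bernstein-type generic chaining (in the spirit of mixed-tail chaining with simultaneous Bernstein and sub-Gaussian bounds) with this entropy, followed by a peeling argument; this produces an estimation bound of order $\log \delta^{-1} \cdot \gamma_n^{a(\alpha+1)/(2\alpha+1)} n^{-(\alpha+1)/(2\alpha+1)}$ with probability $1 - \delta$. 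The linear $\log \delta^{-1}$ factor, as opposed to the $\sqrt{\log \delta^{-1}}$ that a sub-Gaussian Dudley bound would produce, is what Bernstein chaining buys from the sub-exponential structure of $\epsilon$, and is the source of the sharper confidence dependence compared to Theorem \ref{Main Theorem}.

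The choice $\gamma_n = n^{(\alpha-2)/(2\alpha)}$ equates the approximation error to $n^{-1/\alpha}$; at this value of $\gamma_n$ the estimation bound simplifies to order $n^{-(\alpha+3)/(2(2\alpha+1))}$, and the inequality $(\alpha+3)\alpha \geq 2(2\alpha+1)$ — equivalent to $(\alpha-2)(\alpha+1) \geq 0$ — shows the estimation piece is dominated by the approximation piece, so summing recovers \eqref{eq: Pure ERM Consistency}. The main obstacle is executing the sub-exponential chaining cleanly: unlike the sub-Gaussian case, one must track a Gaussian-type and a Bernstein-type chaining functional simultaneously, and propagate the $\gamma_n^{a}$ entropy prefactor through every scale of the chain without blowing up the constant. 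Here \eqref{eq: Bakry-Emery} together with the bi-Lipschitz equivalence \eqref{Metrics are Equivalent} of $d_{\mathcal{E}}$ and $d$ play a key auxiliary role, ensuring that the empirical process indexed by $\mathcal{H}_{\gamma_n}$ has regular increments with respect to $d_{\mathcal{E}}$, so that the Gaussian small-ball estimates underlying Proposition \ref{Covering Numbers} transfer cleanly to the sub-exponential empirical-process setting.
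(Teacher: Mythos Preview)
Your overall plan---bound the approximation via Theorem~\ref{Capacitary Poincare}, control the estimation via the $L^\infty$ entropy of Proposition~\ref{Covering Numbers} together with the envelope $\|g\|_\infty \le \sqrt{\gamma_n}$, then take $\gamma_n = n^{(\alpha-2)/(2\alpha)}$---is exactly the paper's. The paper carries out the estimation step through a population-level oracle inequality in the style of Koltchinskii and Bartlett--Mendelson: the empirical excess is split as $2\epsilon(\bar f - f) + \ell_f$, Adamczak's unbounded Talagrand inequality with a Dudley bound handles the sub-exponential noise piece, and contraction followed by Bousquet's inequality handles $\ell_f$, yielding a localized fixed point $(r^*)^2 \asymp \gamma_n^{(1+a)(\alpha+1)/(2\alpha+1)} n^{-(\alpha+1)/(2\alpha+1)}$ (with $a$ the entropy exponent from Proposition~\ref{Covering Numbers}).

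Two gaps in your sketch deserve attention. First, the estimation rate you claim, $\gamma_n^{a(\alpha+1)/(2\alpha+1)} n^{-(\alpha+1)/(2\alpha+1)}$, is the fixed point of the \emph{noise} process only; the ratio term $|\|g-\tilde f^*\|_n^2 - \|g-\tilde f^*\|_{L^2(\nu)}^2|$ indexes a class that is $2\sqrt{\gamma_n}$-Lipschitz in $g$, so its localized complexity carries an extra $\sqrt{\gamma_n}$ and its fixed point is the paper's larger $(r^*)^2$. At your $\gamma_n$ that fixed point also equals $n^{-1/\alpha}$, so the final rate survives, but the estimation piece is not strictly dominated by the approximation as you assert. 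Second, and more directly relevant to the $\log\delta^{-1}$ claim: the basic-inequality route forces you to compare $\hat f_D$ to $\tilde f^*$ in empirical norm while $Y$ is centered at $f^*$, which injects the term $\|f^*-\tilde f^*\|_n^2$. Since $f^*$ lies in a subcritical Dirichlet space and is therefore unbounded, this quantity concentrates only at Chebyshev rate, reintroducing a $\delta^{-1}$. The paper avoids this by working with the population excess loss $\|f-f^*\|_{L^2(\nu)}^2 - \|\bar f-f^*\|_{L^2(\nu)}^2$ directly: the approximation error then enters as the deterministic $\|\bar f - f^*\|_{L^2(\nu)}^2$, and all stochastic control stays on processes indexed by the bounded class $\mathcal H_{\gamma_n}$, which is what secures the $\log\delta^{-1}$ dependence. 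Finally, your remark about \eqref{eq: Bakry-Emery} ensuring regular increments of the empirical process is a slight misattribution: \eqref{eq: Bakry-Emery} is consumed entirely in the proof of Proposition~\ref{Covering Numbers} (via the gradient estimate for harmonic functions), and once the $L^\infty$ entropy is in hand no further use of curvature is needed.
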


We observe that relative to Theorem \ref{Main Theorem}, Theorem \ref{ERM Risk} features a sharper dependence on the confidence level in \eqref{eq: Pure ERM Consistency}, with $\log \delta^{-1}$ replacing $\delta^{-1}$ in \eqref{eq: Main Theorem Statement}. This sharpening is enabled by the additional domain smoothness provided by \eqref{eq: Bakry-Emery}, which implies the H\"older continuity of the elements of the smoothed Dirichlet ball in \eqref{eq: Reg Function Class} (see proof of Proposition \ref{Covering Numbers} in Appendix \ref{ERM Proofs}). This additional control on the regularity of $\mathcal{H}_{\gamma_n}$ and the subgaussianity of the noise permit a uniform "worst-case" concentration analysis over $\mathcal{H}_{ \gamma_n}$, leading to an improved dependence on the confidence level $\delta > 0$ in \eqref{eq: Pure ERM Consistency}. \\

However, more critically, the upper bound in \eqref{eq: Pure ERM Consistency} exhibits a gap with respect to the optimal rate of $n^{-\frac{2}{\alpha + 2}}$ obtained in \eqref{eq: Main Theorem Statement} of Theorem \ref{Main Theorem}. We believe that this gap is expected, and reflects the suboptimality of ERM with respect to the statistical complexity of the Dirichlet space. \\

Indeed, the gap is reminiscent of the well-observed rate sub-optimality of empirical risk minimization for non-Donsker classes, first noted in \cite{birge1993rates} and more recently addressed in \cite{kuchibhotla2022least, kur2020suboptimality, han2021set}. In fact, for general $\mathcal{H}$, we see from the heat kernel bounds implied by \eqref{eq: VD}, \eqref{eq: Poincare Inequality}, \eqref{eq: Exit Upper}, \eqref{eq: Exit Lower} and Theorem 1.2 in \cite{grigor2015generalized}, Theorem 1 in \cite{varopoulos1985hardy}, the definition of $\nu$ in \eqref{eq: Sampling Measure}, and Theorem 1.5 in \cite{ben2007sobolev} that the eigenvalues of the covariance operator $C_{\nu}$ obey a type of Weyl's law:
\begin{equation*}
    \lambda_k(C_{\nu}) \preceq k^{-\frac{\beta}{\alpha}}
\end{equation*}
Then, from the relationship between entropy numbers and eigenvalues \cite{edmunds1996function}, we can easily deduce that the $L^2(\nu)$-metric entropy of the unit ball in $\mathcal{H}(\mathcal{X})$ \footnote{$\mathcal{H}(\mathcal{X})$ denotes the trace of $\mathcal{H}$ to $(\mathcal{X}, \nu)$, see section 6.2 of \cite{fukushima2010dirichlet}} scales like:
\begin{equation}
\label{eq: MS Metric Entropy}
    \log \mathcal{N}(\epsilon, \mathbb{B}(\mathcal{H}(\mathcal{X})), \|\cdot\|_{L^2(\nu)}) \preceq \epsilon^{-\frac{2\alpha}{\beta}}
\end{equation}
Clearly $\mathbb{B}(\mathcal{H}(\mathcal{X}))$ is not pregaussian, and hence not Donsker, for $\alpha \geq \beta$ (precisely the transient regime we consider). Substituting $\beta = 2$ in \eqref{eq: MS Metric Entropy}, we notice that the upper bound in Theorem \ref{ERM Risk} matches the expected upper bound \cite{birge1993rates} for \textit{supercritical} non-Donsker classes, when the bracketing and covering $L^2(\nu)$ metric entropies are asymptotically equivalent. We emphasize that this is merely a stylistic comparison, the classical techniques from \cite{birge1993rates} and related works \cite{gine2016mathematical, van1996weak} cannot be applied to our setting as the uniform and bracketing metric entropy of  $\mathbb{B}(\mathcal{H})$ are not well-defined due to its elements lacking pointwise definition. Hence, Theorem \ref{ERM Risk} suggests that random obstacle renormalization extends the analysis of non-Donsker classes beyond the supercritical regime.

\section{Discussion} 
In this paper, we consider nonparametric estimation over general metric measure Dirichlet spaces. Unlike the more commonly studied reproducing kernel Hilbert space, whose elements may be defined pointwise, a Dirichlet space typically only contains equivalence classes, i.e. its elements are only unique almost everywhere. This lack of pointwise definition presents significant challenges in the context of nonparametric estimation, leading the classical ridge regression problem to be ill-posed. In this paper, we develop a new technique for renormalizing the square loss by replacing pointwise evaluations with certain \textit{local means} around the boundaries of obstacles centered at each data point. The resulting renormalized empirical risk functional is well-posed and even admits a representer theorem in terms of certain equilibrium potentials, which are just truncated versions of the associated Green function, cut-off at a data-driven threshold. We demonstrate the global, out-of-sample rate optimality of the sample minimizer, and derive an adaptive upper bound on its convergence rate that highlights the interplay of the analytic, geometric, and probabilistic properties of the Dirichlet form. Our framework notably does not require the smoothness of the underlying space, and is applicable to both manifold and fractal settings. To the best of our knowledge, this is the first paper to obtain optimal out-of-sample convergence guarantees in the framework of general metric measure Dirichlet spaces. \\

There are several further avenues for investigation. Perhaps the most pressing direction involves exploring the additional structure required on $\nu$ to close the gap between the convergence rates obtained for renormalized ridge regression in Theorem \ref{Main Theorem} (optimal) and constrained ERM (over a renormalized Dirichlet ball) in Theorem \ref{ERM Risk} (suboptimal). In particular, a stronger isoperimetric condition on $\nu$, such as an $L^1(\nu)$-Poincar\'e inequality, may enable a sharper entropic analysis such as in \cite{han2021set}. Another promising direction involves exploring whether the optimal rates obtained by renormalized ridge regression may be extended to smoother, subcritical Sobolev spaces (i.e. $\mathcal{H}^s(\mathcal{M})$ for $1 < s < \frac{\alpha}{\beta}$) by interpolating between the Dirichlet space ($\mathcal{H}^1(\mathcal{M})$) and a sufficiently regular, supercritical Sobolev space that enjoys the standard $n^{-\frac{1}{2}}$ consistency. 
\appendix

\section{Proof of Theorem \ref{Capacitary Poincare} and Lemma \ref{Average Projection}}
\label{Poincare Proofs}
\begin{proof}[Proof of Theorem \ref{Capacitary Poincare}]
Firstly, we note that it is sufficient to prove the theorem for $g \in C_{0}(\mathcal{M}) \cap \mathcal{H}$. Indeed, suppose \eqref{eq: Capacitary Poincare} holds for $g \in C_{0}(\mathcal{M}) \cap \mathcal{H}$. Then, by the regularity of $\mathcal{E}$, we have that for any $g \in \mathcal{H}$, there is an approximating sequence of $\{g_i\}_{i = 1}^{\infty} \in C_{0}(\mathcal{M}) \cap \mathcal{H}$ such that $\mathcal{E}(g_i - g, g_i - g) \to 0$ as $i \to \infty$. Hence, we have that:
\begin{align}
    \int_{\mathcal{X}} (g(x) - P^{\gamma}_{x}g)^2d\nu(x) & \leq 4\int_{\mathcal{X}} (g_i(x) - P^{\gamma}_{x}g_i)^2d\nu(x) + 4\int_{\mathcal{X}} (g_i - g)^2d\nu + 4\int_{\mathcal{X}} (P^{\gamma}_x g - P^{\gamma}_x g_i)^2 d\nu(x) \nonumber \\
    & \leq 4\int_{\mathcal{X}} (g_i(x) - P^{\gamma}_{x}g_i)^2d\nu(x) + 4||g - g_i||^2_{L^2(\nu)} + 4\gamma\mathcal{E}(g_i - g, g_i - g) \label{eq: Norm of Obstacle} \\
    & \preceq \gamma^{\frac{\beta}{\beta - \alpha}} \mathcal{E}(g_i, g_i) + 4||g - g_i||^2_{L^2(\nu)} + 4\gamma\mathcal{E}(g_i - g, g_i - g) \nonumber 
\end{align}
where in \eqref{eq: Norm of Obstacle} we have applied the fact that $|P^{\gamma}_{x}h| = |\mathcal{E}(h, \gamma e_x)| \leq \sqrt{\gamma \mathcal{E}(h, h)}$ for any $h \in \mathcal{H}$ by construction. Now, choosing balls $B_1, B_2 \subset \mathcal{M}$ such that $\mathcal{X} \subset B_1 \subset B_2$, and applying the Faber-Krahn inequality in $B_2$ (follows from \eqref{eq: VD}, \eqref{eq: Poincare Inequality}, \eqref{eq: Exit Upper}/\eqref{eq: Exit Lower} by Theorem 1.2 in \cite{grigor2015generalized} and Theorem 3.11 in \cite{grigor2012two}), to $h\phi$ for $\phi \in \text{cutoff}(B_1, B_2)$ given by \eqref{eq: Cutoff Sobolev}, we can readily show:
\begin{equation*}
    ||h||^2_{L^2(\nu)} \leq C\mathcal{E}(h, h)
\end{equation*}
for some constant $C > 0$ independent of $h \in \mathcal{H} \cap L^2(\nu)$ (and depending only on $\text{diam}(\mathcal{X})$). Hence, we have $||g - g_i||^2_{L^2(\nu)} \preceq \mathcal{E}(g-g_i, g-g_i)$ in \eqref{eq: Norm of Obstacle}. Passing the limit $i \to \infty$, we obtain \eqref{eq: Capacitary Poincare} for $g \in \mathcal{H}$. \\

We will first demonstrate that there exists a $r > 0$ such that $\mathcal{O}_{x} \in B(x, r)$ for all $x \in \mathcal{X}$. Since $\gamma = \text{cap}(\mathcal{O}_x)^{-1}$ is sufficiently large, we can without loss of generality suppose $d(x, y) \leq 1$ for $y \in \mathcal{O}_x$. Let $R = \text{diam}(\mathcal{X})$. Then, we have by Theorem 7.5 in \cite{grigor2014heat} that:
\begin{align}
    G(x, y) & \leq C\int_{\frac{d(x, y)}{4}}^{\infty} \frac{\Psi(s)}{sV(x, s)} ds \nonumber \\
    & \leq \int_{\frac{d(x, y)}{4}}^{1} \frac{\Psi(s)}{sV(x, s)} ds + \int_{1}^{\infty} \frac{\Psi(s)}{sV(x, s)} ds \nonumber \\
    & \leq \frac{C\Psi(1)}{V(x, 1)}\int_{\frac{d(x, y)}{4}}^{1} \frac{s^{\beta}}{s^{\alpha + 1}} ds + \frac{C\Psi(1)}{V(x, 1)}\int_{1}^{\infty} \frac{s^{\beta'}}{s^{\alpha' + 1}} ds  \label{eq: Apply VD + Growth} \\
    & \leq \frac{CR^{\alpha}\Psi(1)d(x, y)^{\beta - \alpha}}{\mu(\mathcal{X})} \label{eq: Apply Diameter VD}
\end{align}
where in \eqref{eq: Apply VD + Growth} we have applied the growth conditions of $\Psi$ \eqref{eq: Growth Condition} and \eqref{eq: VD}/\eqref{eq: RVD}, and in \eqref{eq: Apply Diameter VD} we have again applied volume doubling and the definition of $R$. Hence, we obtain, for $y \in \mathcal{O}_x$:
\begin{align}
d(x, y) & \leq K_1G(x, y)^{\frac{1}{\beta - \alpha}} \nonumber \\
& \leq K_1\gamma^{\frac{1}{\beta - \alpha}} \label{eq: Radial Upper Bound}
\end{align}
for some $K_1 > 0$ independent of $\gamma, x$, or $y$. For the remainder of the proof let $r = K_1\gamma^{\frac{1}{\beta - \alpha}}$. We first choose a $r$-net $\{x_i\}_{i = 1}^N$ of $\mathcal{X}_r$ such that $B(x_i, 3\sigma^{-1}r)$ satisfy the bounded overlap property (this always follows by volume doubling). Then, we observe that:
\begin{equation}
\label{eq: Variance Breakdown}
    \int_{B(x_i, r)} (g - P_x g)^2 d\mu \leq 2  \int_{B(x_i, r)} (g(x) - g_{B(x_i, r)})^2 d\mu(x) + 2\int_{B(x_i, r)} (P_xg - g_{B(x_i, r)})^2 d\mu(x)
\end{equation}
We can estimate the first term above using \eqref{eq: Poincare Inequality}. To estimate the second term, we use \eqref{eq: Cutoff Sobolev}. Namely, fix $x \in B(x_i, r)$ and let $\phi \in \text{Cutoff}(B(x, r), 2B(x, r))$ be the cutoff function in \eqref{eq: Cutoff Sobolev} for $h = g - g_{B(x_i, r)}$. Then, since $\mathcal{O}_x \in B(x, r)$ we have that:
\begin{align}
    \frac{1}{\nu_x(\partial \mathcal{O}_x)}\int (g(y) - g_{B(x_i, r)})d\nu_x(y) & = \frac{1}{\nu_x(\partial \mathcal{O}_x)}\int (g(y) - g_{B(x_i, r)}) \phi(y) d\nu_x(y) \nonumber \\
    & = \frac{1}{\nu_x(\partial \mathcal{O}_x)} \mathcal{E}((g - g_{B(x_i, r)}) \phi, e_x) \nonumber \\
    & = \frac{1}{\nu_x(\partial \mathcal{O}_x)} \int_{B(x, 2r)} d\Gamma((g - g_{B(x_i, r)}) \phi, e_x) \label{eq: Support Condition} \\
    & \leq \sqrt{\gamma} \Big(\int_{B(x, 2r)} d\Gamma((g - g_{B(x_i, r)}) \phi, (g - g_{B(x_i, r)}) \phi)\Big)^{\frac{1}{2}} \label{eq: Cauchy Schwarz} \\
    & \leq \sqrt{\gamma} \sqrt{\frac{3}{2}\int_{B(x, 2r)} (g - g_{B(x_i, r)})^2 d\Gamma(\phi, \phi) + 4\int_{B(x, 2r)} \phi^2 d\Gamma(g, g)} \label{eq: Product Rule} \\
    & \leq \sqrt{\gamma} \sqrt{\Big(\frac{3C_4}{2\Psi(r)}\Big)\int_{B(x, 2r)} (g - g_{B(x_i, r)})^2 d\mu + (4 + 1.5C_3)\int_{B(x, 2r)} d\Gamma(g, g)} \label{eq: CSA} 
\end{align}
where \eqref{eq: Support Condition} follows from $\text{supp}(e_x), \text{supp}(\phi) \subset B(x, 2r)$, in \eqref{eq: Cauchy Schwarz} we have applied Cauchy-Schwarz and the fact $\mathcal{E}(e_x, e_x) = \nu_x(\partial \mathcal{O}_x) = \gamma^{-1}$, in \eqref{eq: Product Rule} we have applied the Leibniz rule for the carr\'e du champ, and in \eqref{eq: CSA} we have applied \eqref{eq: Cutoff Sobolev}. Now, observe  that:
\begin{align}
    (g_{B(x, 2r)} - g_{B(x_i, r)})^2 & \leq \frac{1}{V(x, 2r)V(x_i, r)} \int_{B(x_i, r)}\int_{B(x, 2r)} (g(y) - g(z))^2d\mu d\mu \nonumber \\
    & \preceq \frac{1}{V(x_i, 3r)V(x_i, 3r)} \int_{B(x_i, r)}\int_{B(x_i, 3r)} (g(y) - g(z))^2d\mu d\mu \label{eq: Apply Volume Doubling} \\
    & \preceq \frac{\Psi(r)}{V(x_i, 3r)} \int_{B(x_i, 3\sigma^{-1}r)} d\Gamma(g, g) \label{eq: Revised Poincare}
\end{align}
where in \eqref{eq: Apply Volume Doubling} we have applied volume doubling, and \eqref{eq: Revised Poincare} follows from an alternative version of the Poincar\'e inequality (see e.g. Lemma 4.1 in \cite{grigor2015generalized}) and \eqref{eq: Growth Condition}. Hence, we obtain that:
\begin{align}
    \int_{B(x, 2r)} (g - g_{B(x_i, r)})^2 d\mu & \leq 2\int_{B(x, 2r)} (g - g_{B(x, 2r)})^2 d\mu + 2V(x, 2r)(g_{B(x, 2r)} - g_{B(x_i, r)})^2 \nonumber \\
    & \preceq 2C\Psi(r)\int_{B(x, 2\sigma^{-1}r)} d\Gamma(g, g) + \Psi(r) \int_{B(x_i, 3\sigma^{-1}r)} d\Gamma(g, g) \label{eq: Apply Difference of Means} \\
    & \preceq \Psi(r) \int_{B(x_i, 3\sigma^{-1}r)} d\Gamma(g, g) \label{eq: Final Energy Bound}
\end{align}
where in \eqref{eq: Apply Difference of Means} we have applied \eqref{eq: Poincare Inequality}, \eqref{eq: Revised Poincare} and  $B(x, 2r) \subset B(x_i, 3r)$. Substituting \eqref{eq: Final Energy Bound} back into \eqref{eq: CSA}, we obtain:
\begin{align}
     \frac{1}{\nu_x(\partial \mathcal{O}_x)}\int (g - g_{B(x_i, r)})d\nu_x & \leq \sqrt{\gamma} \sqrt{\Big(\frac{3C_4}{2\Psi(r)}\Big)\int_{B(x, 2r)} (g - g_{B(x_i, r)})^2 d\mu + (4 + 1.5C_3)\int_{B(x, 2r)} d\Gamma(g, g)} \nonumber \\
    & \preceq \sqrt{\gamma} \sqrt{\int_{B(x_i, 3\sigma^{-1}r)} d\Gamma(g, g) + (4 + 1.5C_3)\int_{B(x, 2r)} d\Gamma(g, g)} \\
    & \preceq \sqrt{\gamma} \sqrt{\int_{B(x_i, 3\sigma^{-1}r)} d\Gamma(g, g)} \label{eq: Energy Upper Bound}
\end{align}
Then, applying \eqref{eq: Poincare Inequality} again to the first term of \eqref{eq: Variance Breakdown} and substituting \eqref{eq: Energy Upper Bound} for the second term, we have that:
\begin{equation*}
    \int_{B(x_i, r)} (g - P_x g)^2 d\mu \leq 2\Psi(r)\int_{B(x_i, \sigma^{-1}r)}d\Gamma(g, g) + \gamma V(x_i, r)\int_{B(x_i, 3\sigma^{-1}r)} d\Gamma(g, g)
\end{equation*}
Now, observe that by \eqref{eq: VD/RVD} and \eqref{eq: Radial Upper Bound}, we have that:
\begin{align*}
    \gamma V(x_i, r) & \preceq \gamma r^{\alpha}  \\
    & \preceq \gamma^{1 + \frac{\alpha}{\beta - \alpha}} \\
    & = \gamma^{\frac{\beta}{\beta - \alpha}}
\end{align*}
by the definition of $r$. Likewise:
\begin{align*}
    \Psi(r) & \preceq \Psi(\gamma^{\frac{1}{\beta - \alpha}})  \\
    & \preceq \gamma^{\frac{\beta}{\beta - \alpha}}
\end{align*}
by the growth assumptions \eqref{eq: Growth Condition}. Hence, putting this all together, we have:
\begin{equation*}
    \int_{B(x_i, r)} (g - P_x g)^2 d\mu \preceq \gamma^{\frac{\beta}{\beta - \alpha}} \int_{B(x_i, 3\sigma^{-1}r)} d\Gamma(g, g)
\end{equation*}
Summing over the $i \in [n]$ and noting that, by the bounded overlap property, only a fixed number (independent of $r$) of $B(x_i, 3\sigma^{-1}r)$ may intersect at a given point in $\mathcal{X}$, we have:
\begin{equation}
\label{eq: Average over Interior}
    \int_{\mathcal{X}} (g - P_x g)^2 d\mu \preceq \gamma^{\frac{\beta}{\beta - \alpha}}\mathcal{E}(g, g)
\end{equation}
Now, dividing both sides by $\mu(\mathcal{X})$ we obtain our result. 
\end{proof}

\begin{proof}[Proof of Lemma \ref{Average Projection}]
The proof strongly resembles that of Theorem \ref{Capacitary Poincare} above so we provide a sketch and omit the details. Again, we may assume $g \in C_0(\mathcal{M}) \cap \mathcal{H}$.  Let $r = K_1\gamma^{\frac{1}{\beta - \alpha}}$ be the same as in \eqref{eq: Radial Upper Bound} in the proof of Theorem \ref{Capacitary Poincare}, and begin with a $r$-net $\{x_i\}_{i = 1}^N$ of $\mathcal{X} = \text{supp}(\nu)$, such that $\{B(x_i, 3\sigma^{-1}r)\}_{i = 1}^N$ enjoy the bounded overlap property. Let $x \in B(x_i, r)$ and let $\phi \in \text{Cutoff}(B(x, r), 2B(x, r))$ be the cutoff function in \eqref{eq: Cutoff Sobolev} for $h = g - g_{B(x_i, r)}$. Since $\mathcal{O}_x \subset B(x, r)$, we can write:
\begin{align}
    P^{\gamma}_x g & = \frac{1}{\nu_x(\partial \mathcal{O}_x)}\int g(y) d\nu_x(y) \nonumber \\
    & = \frac{1}{\nu_x(\partial \mathcal{O}_x)}\int g(y) \phi(y) d\nu_x(y) \nonumber \\
    & = \frac{1}{\nu_x(\partial \mathcal{O}_x)} \mathcal{E}(g \phi, e_x) \nonumber \\
    & = \frac{1}{\nu_x(\partial \mathcal{O}_x)} \int_{B(x, 2r)} d\Gamma(g \phi, e_x) \nonumber \\
    & \leq \sqrt{\gamma} \Big(\int_{B(x, 2r)} d\Gamma(g \phi, g \phi)\Big)^{\frac{1}{2}} \label{eq: Cauchy Schwarz 2} \\
    & \leq \sqrt{\gamma} \sqrt{\frac{3}{2}\int_{B(x, 2r)} g^2 d\Gamma(\phi, \phi) + 4\int_{B(x, 2r)} \phi^2 d\Gamma(g, g)} \label{eq: Product Rule 2} \\
    & \leq \sqrt{\gamma} \sqrt{\Big(\frac{3C_2}{2\Psi(r)}\Big)\int_{B(x, 2r)} g^2 d\mu + (4 + 1.5C_1)\int_{B(x, 2r)} d\Gamma(g, g)} \label{eq: CSA 2} 
\end{align}
where again \eqref{eq: Cauchy Schwarz 2} follows from the Cauchy-Schwarz inequality, \eqref{eq: Product Rule 2} follows from the Leibniz rule for the energy measure, and \eqref{eq: CSA 2} follows from the cutoff Sobolev inequality \eqref{eq: Cutoff Sobolev}. Now squaring both sides, noting again that $\gamma V(x_i, r) \preceq \gamma^{\frac{\beta}{\beta - \alpha}}$ and $\Psi(r) = \Psi(K_1 \gamma^{\frac{1}{\beta - \alpha}}) \succeq \gamma^{\frac{\beta}{\beta - \alpha}}$ by \eqref{eq: VD/RVD} and \eqref{eq: Growth Condition}, we obtain:
\begin{equation*}
    \int_{B(x_i, r)} (P^{\gamma}_x g)^2 d\mu(x) \preceq \int_{B(x_i, 3r)} g^2 d\mu +  \Psi(r) \int_{B(x_i, 3r)} d\Gamma(g, g)
\end{equation*}
Summing over $i \in [N]$ and recalling that the family $\{B(x_i, 3r)\}_{i = 1}^N$ have the bounded overlap property (by volume doubling), we obtain: 
\begin{equation}
\label{eq: Energy + L2 with Overlap}
    \int_{\mathcal{X}} (P^{\gamma}_x g)^2 d\mu(x) \preceq \int_{\mathcal{X}_{3r}} g^2 d\mu +  \Psi(r) \int_{\mathcal{X}_{3r}} d\Gamma(g, g)
\end{equation}
where $\mathcal{X}_s \equiv \{x \in \mathcal{M}: d(x, \mathcal{X}) \leq s\}$ denotes the $s$-neighborhood of $\mathcal{X}$. Recall that $\mathcal{X} = \text{supp}(\nu)$. So in our final step, we wish to estimate $\int_{\mathcal{X}_{3r} \setminus \mathcal{X}} g^2 d\mu$ in terms of $||g||_{L^2(\nu)}$ and $\mathcal{E}(g, g)$. Recall that $\mathcal{X}^{\circ}$ is an $A$-uniform domain in $\mathcal{M}$ by assumption and let $\mathcal{X}^r = \{x \in \mathcal{M}: 0.5 A^{-1}r \leq d(x, \mathcal{X}^c) \leq Ar\}$ (note that since $r \to 0$ as $\gamma \to \infty$, we can w.l.o.g. suppose $r > 0$ is sufficiently small). Consider a $\frac{r}{2}$-net $V$ of this set, and observe that, by construction, the family $\Big\{B\Big(z,  \frac{r}{4}\Big)\Big\}_{z \in V}$ is disjoint. Now, consider a $r$-net $W$ of $X_{3r} \setminus X^{\circ}$. For each $w \in W$, let $z_{w} \in V$ denote the element of the net $V$ that is closest to $w$, i.e. $d(w, z_w) = \min_{z \in V} d(w, z)$. Observe that for a given $z \in V$, there are only finitely many $w \in W$ such that $z = z_{w}$. Indeed, since $w \subset \mathcal{X}_{3r} \setminus \mathcal{X}^{o}$ and $V$ is an $\frac{r}{2}$-net of $\mathcal{X}^r$, it follows that $d(w, z_{w}) \leq \frac{9r}{2}$. This can be seen by letting $\xi \in \partial \mathcal{X}$ be such that $\xi = \text{arg} \min_{x \in \mathcal{X}} d(w, x)$, and observing that since $\mathcal{X}^{\circ}$ is $A$-uniform, it satisfies a corkscrew condition, i.e. there exists a $y \in \mathcal{X}^{\circ}$ such that $d(\xi, y) = r$ and $d(y, \mathcal{X}^{c}) > 0.5A^{-1}r$ (Lemma 2.6 in \cite{kajino2023heat}). Thus, $y \in \mathcal{X}^r$ and hence, there exists a $\tilde{z}_w \in V$, with $d(y, \tilde{z}_w) \leq \frac{r}{2}$. Then, $d(w, z_w) \leq d(w, \tilde{z}_w) \leq 3r + r + 
\frac{r}{2} = \frac{9r}{2}$ by the triangle inequality. Thus, if $w, \tilde{w} \in W$ are such that $z = z_{w} = z_{\tilde{w}}$, then it follows that $w, \tilde{w} \in B\big(z, \frac{9r}{2}\big)$, and hence since $d(w, \tilde{w}) \geq r$, by volume doubling we have that there are only finitely many such distinct pairs, and this number is independent of $r$. Thus, we may write:
\begin{align}
    \int_{\mathcal{X}_{3r} \setminus \mathcal{X}} g^2 d\mu & \leq \sum_{w \in W} \int_{B(w, r)} g^2 d\mu \nonumber \\
    & \preceq \sum_{w \in W} \int_{B(w, r)} (g - g_{B(w, r)})^2 d\mu + \int_{B(w, r)} (g_{B(z_w, 0.25r)} - g_{B(w, r)})^2 d\mu + V(w, r)g^2_{B(z_w, 0.25r)} \nonumber \\
    & \preceq \sum_{w \in W} \Psi(r)\int_{B(w, \sigma^{-1}r)} d\Gamma(g, g)  + \frac{\Psi(r)V(w, r)}{V(w, 5r)} \int_{B(w, 5\sigma^{-1}r)} d\Gamma(g, g) + V(w, r)g^2_{B(z_w, 0.25r)} \label{eq: Apply PI Twice} \\
    & \preceq \sum_{w \in W} \Psi(r)\int_{B(w, \sigma^{-1}r)} d\Gamma(g, g)  + \Psi(r) \int_{B(w, 5\sigma^{-1}r)} d\Gamma(g, g) + \int_{B(z_w, 0.25r)} g^2d\mu \label{eq: Apply VD + CS} \\
    & \preceq \Psi(r)\int_{\mathcal{X}_{8\sigma^{-1}r}} d\Gamma(g, g) + \int_{\mathcal{X}} g^2 d\mu \label{eq: Inside Domain}
\end{align}
where in \eqref{eq: Apply PI Twice} the first term follows from Poincar\'e inequality, the second term from a version of the latter as in \eqref{eq: Revised Poincare} after noting that $B(z_w, 0.25r) \subset B(w, 5r) \subset B(z_w, 10r)$ and applying volume doubling, and in \eqref{eq: Apply VD + CS} we have applied Cauchy-Schwarz and volume doubling. Finally, \eqref{eq: Inside Domain} follows from the bounded overlap property of the families $\{B(w, \sigma^{-1}r)\}_{w \in W}$ and $\{B(w, 5\sigma^{-1}r)\}_{w \in W}$ (by volume doubling) and the fact that $\{B(z_w, 0.25r)\}_{w \in W}$ are disjoint with only finitely many (independent of $r$) summands in the third term of \eqref{eq: Apply VD + CS} repeated (by the previous discussion). Combining \eqref{eq: Inside Domain} with \eqref{eq: Energy + L2 with Overlap} and noting $\Psi(r) \asymp \Psi\Big(\gamma^{\frac{1}{\beta - \alpha}}\Big) \preceq \gamma^{\frac{\beta}{\beta - \alpha}}$ we obtain our result. 
\end{proof} 

\section{Proof of Theorem \ref{Main Theorem}}
\label{Main Proof}
Throughout this section and the results in Section \ref{Auxiliary Estimates}, we assume $\mathcal{X}^{\circ}$ is $A$-uniform and the validity of \eqref{eq: VD}, \eqref{eq: RVD}, \eqref{eq: VD/RVD}, \eqref{eq: Poincare Inequality}, \eqref{eq: Exit Upper}, \eqref{eq: Exit Lower}, and \eqref{eq: Growth Condition}. 
\label{Error Decomposition}

\subsection{Bounding the Bias}
Bounding the first term, the bias, is straightforward. Indeed from some straightforward spectral calculus, we can see:
\begin{align}
    ||f^{*} - f_{\lambda}||^2_{L^2(\nu)} & = \lambda^2\|(C_{\nu} + \lambda)^{-1}f^{*}\|^2_{L^2(\nu)} \nonumber \\
    & = \lambda^2 \mathcal{E}(C^{\frac{1}{2}}_{\nu}(C_{\nu} + \lambda)^{-1}f^{*}, C^{\frac{1}{2}}_{\nu}(C_{\nu} + \lambda)^{-1}f^{*}) \nonumber \\
    & \leq \lambda^2 ||C^{\frac{1}{2}}_{\nu}(C_{\nu} + \lambda)^{-1}||^2_{\mathcal{H} \to \mathcal{H}} ||f^{*}||^2_{\mathcal{H}} \nonumber \\
    & \leq \lambda||f^{*}||^2_{\mathcal{H}} \label{eq: Bias Bound}
\end{align}

\subsection{Bounding the Variance}
We will now focus on the variance, which involves a large deviation estimate:

\begin{align}
    ||\hat{f}_{D, \lambda} - \hat{f}_{\lambda}||_{L^2(\nu)} & = \mathcal{E}(\hat{f}_{D, \lambda} - \hat{f}_{\lambda}, C_{\nu}(\hat{f}_{D, \lambda} - \hat{f}_{\lambda})) \label{eq: Apply Reproducing Covariance} \\
    & = ||C^{\frac{1}{2}}_{\nu}(\hat{f}_{D, \lambda} - \hat{f}_{\lambda})||_{\mathcal{H}} \nonumber \\
    & \leq \|C^{\frac{1}{2}}_{\nu}(C_{\nu} + \lambda)^{-\frac{1}{2}}\|_{\mathcal{H}} \|(C_{\nu} + \lambda)^{\frac{1}{2}}(\hat{C}_{D} + \lambda)^{-1}(C_{\nu} + \lambda)^{\frac{1}{2}}\|_{\mathcal{H} \to \mathcal{H}} \nonumber \\
    & \cdot \Big\|(C_{\nu} + \lambda)^{-\frac{1}{2}}\Big(\frac{\gamma_n}{n}\sum_{i = 1}^n Y_i e_{i, n} - (\hat{C}_{D} + \lambda)\hat{f}_{\lambda}\Big)\Big\|_{\mathcal{H}} \label{eq: Key Factor}
\end{align}
where \eqref{eq: Apply Reproducing Covariance} follows from \eqref{eq: Covariance Reproducing} with $m = \nu$. We may further break down the last factor in \eqref{eq: Key Factor} as:
\begin{align}
    (C_{\nu} + \lambda)^{-\frac{1}{2}}\Big(\frac{\gamma_n}{n}\sum_{i = 1}^n Y_i e_{i, n} - (\hat{C}_{D} + \lambda)\hat{f}_{\lambda}\Big) & = (C_{\nu} + \lambda)^{-\frac{1}{2}}\Big(\frac{\gamma_n}{n}\sum_{i = 1}^n Y_i e_{i, n} - (\hat{C}_{D} - \hat{C}_{\nu} + \hat{C}_{\nu} + \lambda)\hat{f}_{\lambda}\Big) \nonumber \\
    & = (C_{\nu} + \lambda)^{-\frac{1}{2}}\Big(\frac{\gamma_n}{n}\sum_{i = 1}^n Y_i e_{i, n} - (\hat{C}_{D}\hat{f}_{\lambda} - \hat{C}_{\nu}\hat{f}_{\lambda}) - \hat{C}_{\nu}f^{*}\Big) \nonumber \\
    & = (C_{\nu} + \lambda)^{-\frac{1}{2}}\Big(\Big(\frac{\gamma_n}{n}\sum_{i = 1}^n Y_i e_{i, n} - \gamma_n\int_{\mathcal{X}} e_{y, n}f^{*}(y)d\nu(y)\Big)  \nonumber \\ 
 & \hspace{2mm} - (\hat{C}_{D}\hat{f}_{\lambda} - \hat{C}_{\nu}\hat{f}_{\lambda}) + \Big(\int_{\mathcal{X}} \gamma_n e_{y, n}(f^{*}(y) - P^{\gamma_n}_{y}f^{*})d\nu(y)\Big)\Big) \label{eq: Final Breakdown}
\end{align}
where in \eqref{eq: Final Breakdown} we have grouped the terms into mean zero deviations and recalled the notation \eqref{eq: Projection Definition} from Theorem \ref{Capacitary Poincare}. \\

We bound the second factor in \eqref{eq: Key Factor} in section \ref{Operator Concentration Auxiliary} and the third factor \eqref{eq: Final Breakdown} in section \ref{Vector Concentration}


\subsubsection{Operator Concentration}
\label{Operator Concentration Auxiliary}
In this section, we will bound  the operator norm $\|(C_{\nu} + \lambda)^{\frac{1}{2}}(\hat{C}_{D} + \lambda)^{-1}(C_{\nu} + \lambda)^{\frac{1}{2}}\|_{\mathcal{H} \to \mathcal{H}}$ appearing in \eqref{eq: Key Factor}. First we write, similar to \cite{fischer2020sobolev}:
\begin{align}
    (C_{\nu} + \lambda)^{\frac{1}{2}}(\hat{C}_{D} + \lambda)^{-1}(C_{\nu} + \lambda)^{\frac{1}{2}} & = (C_{\nu} + \lambda)^{\frac{1}{2}}(\hat{C}_{D} - \hat{C}_{\nu} + \hat{C}_{\nu} -C_{\nu} + C_{\nu} + \lambda)^{-1}(C_{\nu} + \lambda)^{\frac{1}{2}} \nonumber \\
    & = (I - (C_{\nu} + \lambda)^{-\frac{1}{2}}(\hat{C}_{\nu} - \hat{C}_D)(C_{\nu} + \lambda)^{-\frac{1}{2}} \nonumber \\
    & - (C_{\nu} + \lambda)^{-\frac{1}{2}}(C_{\nu} - \hat{C}_{\nu})(C_{\nu} + \lambda)^{-\frac{1}{2}})^{-1} \label{eq: Final Operator Breakdown}
\end{align}
Note, we have derived an estimate for the $\lambda$-regularized operator ``renormalization error'' $\|(C_{\nu} + \lambda)^{-\frac{1}{2}}(C_{\nu} - \hat{C}_{\nu})(C_{\nu} + \lambda)^{-\frac{1}{2}}\|_{\mathcal{H} \to \mathcal{H}}$ in Lemma \ref{Operator Approximation Error}. Hence, in the following lemma, we will focus on estimating the operator ``sampling'' error $\|(C_{\nu} + \lambda)^{\frac{1}{2}}(\hat{C}_{\nu} - \hat{C}_D)(C_{\nu} + \lambda)^{\frac{1}{2}}\|_{\mathcal{H} \to \mathcal{H}}$. 

\begin{lemma}
\label{Operator Concentration}
Suppose $\gamma_n^{\frac{\beta}{\beta - \alpha}} \leq \frac{\lambda}{5K}$ (where $K$ is the constant in Lemma \ref{Operator Approximation Error}) and $\lambda \leq \|C_{\nu}\|_{\mathcal{H} \to \mathcal{H}}$. Then, there exists a $C > 1$, such that:
\begin{equation*}
    \|(C_{\nu} + \lambda)^{\frac{1}{2}}(\hat{C}_{\nu} - \hat{C}_D)(C_{\nu} + \lambda)^{\frac{1}{2}}\|_{\mathcal{H} \to \mathcal{H}} \leq \frac{4C\log \Big(\frac{1}{\delta}\Big) \gamma_n \log \Big(\frac{4e\gamma_n}{\lambda}\Big)}{3n\lambda} + \sqrt{\frac{2C^2\log\Big(\frac{1}{\delta}\Big) \gamma_n \log \Big(\frac{4e\gamma_n}{\lambda}\Big)}{n\lambda}}
\end{equation*}
with probability $1 - \delta$
\end{lemma}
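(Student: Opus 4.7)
The plan is to recognize $\hat C_\nu - \hat C_D$ as a centered empirical average of i.i.d.\ rank-one positive self-adjoint operators on the Hilbert space $(\mathcal{H},\mathcal{E})$, and then to apply an operator Bernstein inequality (in its intrinsic-dimension form) after conjugating by the resolvent $(C_\nu+\lambda)^{-1/2}$. Concretely, define $\xi_i : \mathcal{H}\to\mathcal{H}$ by $\xi_i h = \gamma_n^2\mathcal{E}(e_{X_i,n},h)\,e_{X_i,n}$ so that $\hat C_D = n^{-1}\sum_i \xi_i$ and $\hat C_\nu = \mathbb{E}[\xi_i]$, and set
\begin{equation*}
T_i \;=\; (C_\nu+\lambda)^{-1/2}\,\xi_i\,(C_\nu+\lambda)^{-1/2}, \qquad \tilde C \;=\; \mathbb{E}[T_i] \;=\; (C_\nu+\lambda)^{-1/2}\hat C_\nu(C_\nu+\lambda)^{-1/2}.
\end{equation*}
The target quantity is $\bigl\|n^{-1}\sum_i(T_i-\tilde C)\bigr\|_{\mathcal{H}\to\mathcal{H}}$.

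For the two scalar inputs to Bernstein, the uniform bound follows from the capacity identity $\mathcal{E}(e_{y,n},e_{y,n})=\mathrm{cap}(\mathcal{O}_{y,n})=\gamma_n^{-1}$ recorded in Section~\ref{Capacity Preliminaries} together with $(C_\nu+\lambda)^{-1}\preceq\lambda^{-1}\mathrm{Id}$:
\begin{equation*}
\|T_i\|_{\mathcal{H}\to\mathcal{H}} \;=\; \gamma_n^2\,\mathcal{E}\bigl((C_\nu+\lambda)^{-1}e_{X_i,n},\,e_{X_i,n}\bigr) \;\leq\; \frac{\gamma_n}{\lambda} \;=:\; B.
\end{equation*}
Since $T_i\succeq 0$ and $\|T_i\|\leq B$ we have $T_i^2\preceq B\,T_i$, so $\mathbb{E}[T_i^2]\preceq B\tilde C$ and $V := B\tilde C$ dominates the variance. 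Lemma~\ref{Operator Approximation Error} together with the hypothesis $\gamma_n^{\beta/(\beta-\alpha)}\leq \lambda/(5K)$ yields $\|\tilde C - (C_\nu+\lambda)^{-1/2}C_\nu(C_\nu+\lambda)^{-1/2}\|\leq 1/5$, while the hypothesis $\lambda\leq\|C_\nu\|$ forces $\|(C_\nu+\lambda)^{-1/2}C_\nu(C_\nu+\lambda)^{-1/2}\|\geq 1/2$, so $\|\tilde C\|$ is bounded above and below by absolute constants and $\|V\|\lesssim \gamma_n/\lambda$.

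Next, apply an intrinsic-dimension Bernstein inequality for sums of independent bounded self-adjoint operators on a separable Hilbert space, which for any $t\geq 0$ reads
\begin{equation*}
\mathbb{P}\!\Big(\big\|n^{-1}\textstyle\sum_{i=1}^n(T_i-\tilde C)\big\|_{\mathcal{H}\to\mathcal{H}} \geq t\Big) \;\leq\; 4\,\mathrm{intdim}(V)\,\exp\!\Big(\frac{-nt^2/2}{\|V\|+Bt/3}\Big).
\end{equation*}
The intrinsic dimension equals $\mathrm{intdim}(V)=\mathrm{tr}(\tilde C)/\|\tilde C\|$, and a Fubini-type swap gives $\mathrm{tr}(\tilde C) = \int \gamma_n^2\mathcal{E}((C_\nu+\lambda)^{-1}e_{y,n},e_{y,n})\,d\nu(y) \leq \gamma_n/\lambda$. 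Combined with $\|\tilde C\|\gtrsim 1$ this yields $\mathrm{intdim}(V)\lesssim \gamma_n/\lambda$. Inverting the Bernstein tail in $t$ and using the elementary estimate $\log(4\,\mathrm{intdim}(V)/\delta)\leq C\log(1/\delta)\log(4e\gamma_n/\lambda)$, which is valid whenever both factors exceed $1$ (the regime of interest), produces the two-term bound stated in the lemma, with the $\sqrt{\cdot}$ term arising from the variance and the linear term from the uniform bound.

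The main obstacle is locating and applying an operator Bernstein inequality that is genuinely infinite-dimensional and retains the intrinsic-dimension prefactor, since off-the-shelf matrix-Bernstein statements are for Hermitian matrices in $\mathbb{C}^{d\times d}$. A clean workaround is spectral truncation: project $\tilde C$ onto its top $N$-dimensional eigenspace, apply finite-dimensional Bernstein there, and pass to the limit $N\to\infty$ using the trace-class nature of $\tilde C$, which follows from the compactness of $C_\nu$ (a consequence of the Sobolev-type embedding $\mathcal{H}(\mathcal{X})\hookrightarrow L^2(\nu)$ underlying the Weyl-law estimate $\lambda_k(C_\nu)\preceq k^{-\beta/\alpha}$ recorded after Theorem~\ref{Main Theorem}). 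A secondary nuisance is bookkeeping the absolute constants so that $\tfrac{4C}{3}$ and $2C^2$ emerge exactly as written.
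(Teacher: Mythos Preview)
Your proposal is essentially the same as the paper's proof: both define the rank-one operators $\xi_x\otimes\xi_x$ with $\xi_x=\gamma_n(C_\nu+\lambda)^{-1/2}e_{x,n}$, use $\|\xi_x\|^2_{\mathcal{H}}\leq\gamma_n/\lambda$ for the uniform bound, invoke Lemma~\ref{Operator Approximation Error} together with the hypothesis $\gamma_n^{\beta/(\beta-\alpha)}\leq\lambda/(5K)$ to pin down $\|\tilde C\|$ (the paper additionally appeals to Lemma~\ref{Average Projection} for the upper bound, whereas you get it from the same approximation lemma; both work), and then apply an intrinsic-dimension operator Bernstein inequality---the paper cites Theorem~27 of \cite{fischer2020sobolev}, which is exactly the infinite-dimensional statement you describe and sidesteps your spectral-truncation workaround. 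One minor slip: the approximation lemma with the stated hypothesis yields $\sqrt{6}/5$ rather than $1/5$, but this does not affect the argument.
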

\begin{proof}
Our proof is inspired by the concentration technique in Lemma 17 of \cite{fischer2020sobolev}. Define the random element of $\mathcal{H}$:
\begin{equation*}
    \xi_{x} = \gamma_n(C_{\nu} + \lambda)^{-\frac{1}{2}}e_{x, n} 
\end{equation*}
with $x \sim \nu$. Then, we have:
\begin{align}
    ||\xi_{x} \otimes \xi_{x}||_{\mathcal{H} \to \mathcal{H}} & \leq \frac{\gamma_n}{\lambda} \label{eq: Operator Norm Bound} \\
    \mathbb{E}_{\nu}[||\xi_{x} \otimes \xi_{x}||^2_{\mathcal{H} \to \mathcal{H}}] & = \mathbb{E}_{\nu}[||\xi_{x}||^2_{\mathcal{H}}\xi_{x} \otimes \xi_{x}] \nonumber \\
    & \leq \frac{\gamma_n}{\lambda} \mathbb{E}_{\nu}[\xi_{x} \otimes \xi_{x}] \equiv V \nonumber   
\end{align}
where in \eqref{eq: Operator Norm Bound} we have applied Lemma \ref{Regularized Potential Norm}. For any $h \in \mathcal{H}$, let $g = (C_{\nu} + \lambda)^{-\frac{1}{2}}h$ and observe $g \in L^2(\nu)$ from \eqref{eq: Covariance Reproducing} with $m = \nu$. Then, from Lemma \ref{Average Projection}, we have:
\begin{align}
    \mathbb{E}_{\nu}[\langle h, \xi_{x} \rangle^2_{\mathcal{H}}] & = \int (P_{x}^{\gamma_n} g)^2 d\nu(x) \nonumber \\
    & \preceq \gamma_n^{\frac{\beta}{\beta - \alpha}}\mathcal{E}(g, g) + ||g||^2_{L^2(\nu)} \nonumber \\
    & \leq \frac{\gamma_n^{\frac{\beta}{\beta - \alpha}}}{\lambda}\mathcal{E}(h, h) + \mathcal{E}(h, h) \label{eq: Definition of g} \\
    & \leq \Big(\frac{1}{5K} +  1\Big)\mathcal{E}(h, h) \label{eq: Apply Ratio}
\end{align}
where \eqref{eq: Definition of g} follows the definition of $g$ and the fact that $||g||^2_{L^2(\nu)} = \mathcal{E}(C^{\frac{1}{2}}_{\nu}g, C^{\frac{1}{2}}_{\nu}g)$, and \eqref{eq: Apply Ratio} follows from the assumption that $\frac{\gamma_n^{\frac{\beta}{\beta - \alpha}}}{\lambda} \leq \frac{1}{5K}$. Likewise, we have that:
\begin{align*}
    \frac{\lambda}{\gamma_n}V & = (C_{\nu} + \lambda)^{-\frac{1}{2}}\hat{C}_{\nu}(C_{\nu} + \lambda)^{-\frac{1}{2}} \\
    & = (C_{\nu} + \lambda)^{-\frac{1}{2}}C_{\nu}(C_{\nu} + \lambda)^{-\frac{1}{2}} + (C_{\nu} + \lambda)^{-\frac{1}{2}}(\hat{C}_{\nu} - C_{\nu})(C_{\nu} + \lambda)^{-\frac{1}{2}}
\end{align*}
Thus, by Lemma \ref{Operator Approximation Error}, we have that:
\begin{align}
    ||\lambda \gamma^{-1}_n V||_{\mathcal{H} \to \mathcal{H}} & \geq \|(C_{\nu} + \lambda)^{-\frac{1}{2}}C_{\nu}(C_{\nu} + \lambda)^{-\frac{1}{2}}\|_{\mathcal{H} \to \mathcal{H}} - \|(C_{\nu} + \lambda)^{-\frac{1}{2}}(\hat{C}_{\nu} - C_{\nu})(C_{\nu} + \lambda)^{-\frac{1}{2}}\|_{\mathcal{H} \to \mathcal{H}} \nonumber \\
    & \geq \frac{1}{2} - \sqrt{\frac{K^2\gamma^{\frac{2\beta}{\beta - \alpha}}_n}{\lambda^2} + \frac{K\gamma^{\frac{\beta}{\beta - \alpha}}_n}{\lambda}} \label{eq: Apply Operator Lower Bound} \\ 
    & \geq \frac{1}{2} - \frac{\sqrt{6}}{5}  \label{eq: Plug in Ratio Bound}
\end{align}
where in \eqref{eq: Apply Operator Lower Bound} we have applied Lemma \ref{Operator Approximation Error} and $\lambda \leq \|C_{\nu}\|_{\mathcal{H} \to \mathcal{H}}$, and in \eqref{eq: Plug in Ratio Bound} the assumption that $\frac{\gamma_n^{\frac{\beta}{\beta - \alpha}}}{\lambda} \leq \frac{1}{5K}$. Hence, there exists a $\tilde{K} > 0$ such that:
\begin{align}
   \frac{\gamma_n}{\tilde{K}\lambda} \leq ||V||_{\mathcal{H} \to \mathcal{H}} & \leq \frac{\tilde{K}\gamma_n}{\lambda} \label{eq: V Norm Bound} \\
    \text{tr}(V) & \leq \frac{\gamma^2_n}{\lambda^2} \label{eq: V Trace Bound}
\end{align}
where \eqref{eq: V Trace Bound} follows from \eqref{eq: Operator Norm Bound} and definition. Now, observing that:
\begin{equation*}
    (C_{\nu} + \lambda)^{-\frac{1}{2}}(\hat{C}_{D} - \hat{C}_{\nu})(C_{\nu} + \lambda)^{-\frac{1}{2}}  = \frac{1}{n} \sum_{i = 1}^n \xi_{X_i} \otimes \xi_{X_i} - \mathbb{E}_{\nu}[\xi_x \otimes \xi_x]
\end{equation*}
and applying Bernstein's inequality (Theorem 27 of \cite{fischer2020sobolev}; see also Lemma 26 of \cite{lin2020optimal}), we obtain our result. 
\end{proof}

\begin{lemma}
\label{Operator Concentration Main}
Suppose  $\gamma_n^{\frac{\beta}{\beta - \alpha}} \leq \frac{\lambda}{5K}$ and additionally that $n \geq \frac{32C^2\log \Big(\frac{1}{\delta}\Big) \gamma_n \log \Big(\frac{4e\gamma_n}{\lambda}\Big)}{ \lambda}$ (where $K$ is the constant from Lemma \ref{Operator Approximation Error} and $C$ is the constant from Lemma \ref{Operator Concentration}). Then:
\begin{equation*}
       \|(C_{\nu} + \lambda)^{\frac{1}{2}}(\hat{C}_{D} + \lambda)^{-1}(C_{\nu} + \lambda)^{\frac{1}{2}}\|_{\mathcal{H} \to \mathcal{H}} \preceq 1
\end{equation*}
with probability $1 - \delta$. 
\end{lemma}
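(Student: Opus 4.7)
The plan is to start from the operator decomposition \eqref{eq: Final Operator Breakdown}, which expresses the target operator as
$(I - A_D - B_{\nu})^{-1}$
where $A_D = (C_{\nu} + \lambda)^{-\frac{1}{2}}(\hat{C}_{\nu} - \hat{C}_D)(C_{\nu} + \lambda)^{-\frac{1}{2}}$ is the empirical sampling fluctuation and $B_{\nu} = (C_{\nu} + \lambda)^{-\frac{1}{2}}(C_{\nu} - \hat{C}_{\nu})(C_{\nu} + \lambda)^{-\frac{1}{2}}$ is the Green-truncation approximation error. If I can show $\|A_D + B_{\nu}\|_{\mathcal{H} \to \mathcal{H}} \leq \kappa < 1$ on the high-probability event, then a Neumann series argument immediately yields
\begin{equation*}
\|(I - A_D - B_{\nu})^{-1}\|_{\mathcal{H} \to \mathcal{H}} \leq \frac{1}{1 - \kappa} \preceq 1,
\end{equation*}
which is exactly the claim.

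To control $B_{\nu}$, I would invoke Lemma \ref{Operator Approximation Error}, which (as already extracted in the proof of Lemma \ref{Operator Concentration}) gives
$\|B_{\nu}\|_{\mathcal{H} \to \mathcal{H}} \leq \sqrt{K^2 \gamma_n^{\frac{2\beta}{\beta - \alpha}}/\lambda^2 + K\gamma_n^{\frac{\beta}{\beta - \alpha}}/\lambda}$.
Under the hypothesis $\gamma_n^{\frac{\beta}{\beta - \alpha}} \leq \lambda/(5K)$ the two terms inside the square root are bounded by $1/25$ and $1/5$ respectively, so $\|B_{\nu}\| \leq \sqrt{6}/5 < 1/2$ deterministically.

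To control $A_D$, I apply Lemma \ref{Operator Concentration} directly. With probability $1 - \delta$,
\begin{equation*}
\|A_D\|_{\mathcal{H} \to \mathcal{H}} \leq \frac{4 C \log(1/\delta)\gamma_n \log(4e\gamma_n/\lambda)}{3 n \lambda} + \sqrt{\frac{2 C^2 \log(1/\delta)\gamma_n \log(4e\gamma_n/\lambda)}{n\lambda}}.
\end{equation*}
The sample size hypothesis $n \geq 32 C^2 \log(1/\delta)\gamma_n \log(4e\gamma_n/\lambda)/\lambda$ makes the quantity $t := C^2 \log(1/\delta) \gamma_n \log(4e\gamma_n/\lambda)/(n\lambda)$ at most $1/32$, so the deviation term is at most $\sqrt{2/32} = 1/4$ and the Bernstein bias term is at most $\frac{4}{3}\sqrt{t/C^2} \leq \frac{4}{3}\cdot\frac{1}{32} \cdot \frac{1}{C} \leq 1/24$ for $C \geq 1$. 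Thus $\|A_D\| \leq 1/4 + 1/24 = 7/24$ on this event.

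Combining, on the event of probability $1 - \delta$ I obtain
$\|A_D + B_{\nu}\| \leq 7/24 + \sqrt{6}/5 < 1$,
so the Neumann series converges and yields a uniform constant bound, finishing the proof. The main obstacle is really just careful bookkeeping of the constants $K$ and $C$ so that the two separate conditions (the ratio condition on $\gamma_n^{\beta/(\beta-\alpha)}/\lambda$ and the sample size lower bound) together drive $\|A_D + B_{\nu}\|$ strictly below $1$; no new analytic input is needed beyond what has already been packaged in Lemmas \ref{Operator Approximation Error} and \ref{Operator Concentration}.
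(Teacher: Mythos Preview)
Your proposal is correct and essentially identical to the paper's proof: both start from the decomposition \eqref{eq: Final Operator Breakdown}, bound the approximation term by $\sqrt{6}/5$ via Lemma \ref{Operator Approximation Error} under $\gamma_n^{\beta/(\beta-\alpha)} \leq \lambda/(5K)$, bound the sampling term by $7/24$ via Lemma \ref{Operator Concentration} under the sample-size assumption, and finish with a Neumann series. The only cosmetic difference is that the paper sums $\sum_{i}2^{i-1}(\|A_D\|^i + \|B_\nu\|^i)$ whereas you use the cleaner $1/(1-\|A_D+B_\nu\|)$; note also a small typo in your bias-term algebra (you wrote $\frac{4}{3}\sqrt{t/C^2}$ where it should be $\frac{4t}{3C}$), though your numerical conclusion $1/24$ is correct.
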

\begin{proof}
From \eqref{eq: Final Operator Breakdown} and a Neumann series expansion we have that:
\begin{align}
    \|(C_{\nu} + \lambda)^{\frac{1}{2}}(\hat{C}_{D} + \lambda)^{-1}(C_{\nu} + \lambda)^{\frac{1}{2}}\|_{\mathcal{H} \to \mathcal{H}} & = \|(I - (C_{\nu} + \lambda)^{-\frac{1}{2}}(\hat{C}_{\nu} - \hat{C}_D)(C_{\nu} + \lambda)^{-\frac{1}{2}} \nonumber \\
    & - (C_{\nu} + \lambda)^{-\frac{1}{2}}(C_{\nu} - \hat{C}_{\nu})(C_{\nu} + \lambda)^{-\frac{1}{2}})^{-1}\|_{\mathcal{H} \to \mathcal{H}} \nonumber \\
    & \leq \sum_{i = 0}^{\infty} 2^{i - 1}(\|(C_{\nu} + \lambda)^{-\frac{1}{2}}(\hat{C}_{\nu} - \hat{C}_D)(C_{\nu} + \lambda)^{-\frac{1}{2}}\|^i_{\mathcal{H} \to \mathcal{H}}  \nonumber \\
    & \hspace{2mm} + \|(C_{\nu} + \lambda)^{-\frac{1}{2}}(C_{\nu} - \hat{C}_{\nu})(C_{\nu} + \lambda)^{-\frac{1}{2}}\|^i_{\mathcal{H} \to \mathcal{H}}) \nonumber \\
    & \preceq \sum_{i = 0}^{\infty} 2^{i - 1}\Big(\Big(\frac{7}{24}\Big)^i + \Big(\frac{\sqrt{6}}{5}\Big)^i\Big) \label{eq: Apply Operator Concentration + Approximation}\\
    & \preceq 1 \nonumber
\end{align}
where \eqref{eq: Apply Operator Concentration + Approximation} follows from Lemmas \ref{Operator Concentration} and \ref{Operator Approximation Error} and the assumptions that $\gamma_n^{\frac{\beta}{\beta - \alpha}} \leq \frac{\lambda}{5K}$ and $n \geq \frac{32C^2\log \Big(\frac{1}{\delta}\Big) \gamma_n \log \Big(\frac{4e\gamma_n}{\lambda}\Big)}{\lambda}$
\end{proof}
\subsubsection{Function Concentration}
\label{Vector Concentration}
We now focus on estimating \eqref{eq: Final Breakdown}. We first note that the last term in \eqref{eq: Final Breakdown}

\begin{equation*}
    \int_{\mathcal{X}} \gamma_n e_{y, n}(f^{*}(y) - P^{\gamma_n}_{y}f^{*})d\nu(y)
\end{equation*}
is nonrandom and will be bounded by Lemma \ref{Average Regularized Projection}. We will estimate the remaining two terms using Markov's inequality (note that tighter estimates, using, for example, Bernstein's inequality, are not available, due to the lack of an embedding of $\mathcal{H}$ in $L^p(\nu)$ for $p > \frac{2\alpha}{\alpha - \beta}$). 

\begin{lemma}
\label{Vector Concentration Main}
Suppose $\gamma_n^{\frac{\beta}{\beta - \alpha}} \leq 0.5 K^{-1}\lambda$ (where $K$ is from Lemma \ref{Operator Approximation Error}). Then, there exists a $\tilde{C} > 0$ (independent of $\lambda$ and $n$) such that:
\begin{align*}
   \Big\|(C_{\nu} + \lambda)^{-\frac{1}{2}}\Big(\Big(\frac{\gamma_n}{n}\sum_{i = 1}^n Y_i e_{i, n} - \int_{\mathcal{X}} \gamma_n e_{y, n}f^{*}(y)d\nu(y)\Big) 
 - (\hat{C}_{D}\hat{f}_{\lambda} - \hat{C}_{\nu}\hat{f}_{\lambda})\Big)\Big\|^2_{\mathcal{H}} & \leq \frac{\tilde{C}\gamma_n}{\delta \lambda n} \Big(\rho^2 + (\lambda  + \gamma_n^{\frac{\beta}{\beta - \alpha}} )\|f^{*}\|^2_{\mathcal{H}}\Big) 
\end{align*}
with probability $1 - \delta$. 
\end{lemma}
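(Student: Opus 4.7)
The plan is to rewrite the quantity inside the $\mathcal{H}$-norm as the mean of $n$ i.i.d., mean-zero random elements of $\mathcal{H}$ and then invoke Markov's inequality at level $\delta$. Let $a_x \equiv \gamma_n e_{x, n}$. Since $\epsilon_1 \perp X_1$ and $\mathbb{E}[\epsilon_1] = 0$, we have $\mathbb{E}[Y_1 a_{X_1}] = \int f^{*}(y) a_y \, d\nu(y)$, and by definition $\hat{C}_{\nu}\hat{f}_{\lambda} = \mathbb{E}_{X \sim \nu}[\mathcal{E}(a_X, \hat{f}_{\lambda}) a_X]$. Collecting the four terms inside the outer parentheses, the deviation becomes $\frac{1}{n}\sum_{i = 1}^n \xi_i$ with $\xi_i = Z_i - \mathbb{E}[Z_i]$ and $Z_i \equiv (Y_i - \mathcal{E}(a_{X_i}, \hat{f}_{\lambda})) a_{X_i}$. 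Because $\hat{f}_{\lambda} = (\hat{C}_{\nu}+\lambda)^{-1}\hat{C}_{\nu} f^{*}$ depends only on $\nu$ and $f^{*}$, the $\xi_i$ are i.i.d.\ and centered, so with $A \equiv (C_{\nu} + \lambda)^{-1/2}$ we obtain
\begin{equation*}
\mathbb{E}\Big\|A \cdot \tfrac{1}{n}\textstyle\sum_{i=1}^n \xi_i\Big\|_{\mathcal{H}}^2 \;=\; \tfrac{1}{n}\mathrm{Var}(A\xi_1) \;\leq\; \tfrac{1}{n}\mathbb{E}\|A Z_1\|_{\mathcal{H}}^2,
\end{equation*}
so Markov's inequality reduces the lemma to bounding $\mathbb{E}\|A Z_1\|_{\mathcal{H}}^2$ by $\tilde{C}\gamma_n \lambda^{-1}\big(\rho^2 + (\lambda + \gamma_n^{\beta/(\beta-\alpha)})\|f^{*}\|_{\mathcal{H}}^2\big)$.

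Next, the scalar-vector factorization $\|A Z_1\|_{\mathcal{H}}^2 = (Y_1 - \mathcal{E}(a_{X_1}, \hat{f}_{\lambda}))^2 \|A a_{X_1}\|_{\mathcal{H}}^2$ together with the deterministic estimate $\|A a_x\|_{\mathcal{H}}^2 \leq \gamma_n/\lambda$ supplied by Lemma \ref{Regularized Potential Norm} produces the advertised $\gamma_n/\lambda$ prefactor. Writing $Y_1 = f^{*}(X_1) + \epsilon_1$ and using independence plus the identity $\mathcal{E}(a_x, \hat{f}_{\lambda}) = P_x^{\gamma_n}\hat{f}_{\lambda}$, the remaining scalar residual decomposes (up to a universal constant) as $\rho^2 + \int (f^{*}(x) - P_x^{\gamma_n}\hat{f}_{\lambda})^2 \, d\nu(x)$, where the first term accounts for the $\rho^2$ contribution in the target bound.

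To control the $L^2(\nu)$ residual I would insert $\hat{f}_{\lambda}$ via the triangle inequality,
\begin{equation*}
\Big(\int (f^{*}(x) - P_x^{\gamma_n}\hat{f}_{\lambda})^2 d\nu(x)\Big)^{1/2} \;\leq\; \|f^{*} - \hat{f}_{\lambda}\|_{L^2(\nu)} + \Big(\int (\hat{f}_{\lambda}(x) - P_x^{\gamma_n}\hat{f}_{\lambda})^2 d\nu(x)\Big)^{1/2}.
\end{equation*}
The second summand is handled directly by the capacitary Poincar\'e inequality (Theorem \ref{Capacitary Poincare}), which produces a bound $\preceq \gamma_n^{\beta/(\beta-\alpha)} \mathcal{E}(\hat{f}_{\lambda}, \hat{f}_{\lambda}) \leq \gamma_n^{\beta/(\beta-\alpha)}\|f^{*}\|_{\mathcal{H}}^2$, where the last step uses that $(\hat{C}_{\nu}+\lambda)^{-1}\hat{C}_{\nu}$ is a contraction in the $\mathcal{H}$-norm. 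For the first summand, the resolvent identity $f^{*} - \hat{f}_{\lambda} = \lambda (\hat{C}_{\nu}+\lambda)^{-1} f^{*}$ gives
\begin{equation*}
\|f^{*} - \hat{f}_{\lambda}\|_{L^2(\nu)}^2 = \lambda^2 \|C_{\nu}^{1/2}(\hat{C}_{\nu} + \lambda)^{-1} f^{*}\|_{\mathcal{H}}^2 \leq \lambda \, \|C_{\nu}^{1/2}(\hat{C}_{\nu}+\lambda)^{-1/2}\|^2_{\mathcal{H} \to \mathcal{H}} \, \|f^{*}\|_{\mathcal{H}}^2,
\end{equation*}
and the $\mathcal{H} \to \mathcal{H}$ operator norm is bounded by $1 + O(\gamma_n^{\beta/(\beta-\alpha)}/\lambda)$ via Lemma \ref{Operator Approximation Error} together with the standing hypothesis $\gamma_n^{\beta/(\beta-\alpha)} \leq 0.5 K^{-1}\lambda$, delivering the remaining $(\lambda + \gamma_n^{\beta/(\beta-\alpha)})\|f^{*}\|_{\mathcal{H}}^2$ contribution.

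The main obstacle is precisely this last comparison: the $L^2(\nu)$ norm is governed by the \emph{true} covariance $C_{\nu}$, whereas $\hat{f}_{\lambda}$ is defined through the renormalized $\hat{C}_{\nu}$, so we must exchange $C_{\nu}^{1/2}$ with $(\hat{C}_{\nu}+\lambda)^{-1/2}$ in the operator norm. Lemma \ref{Operator Approximation Error}, which quantifies the capacitary discrepancy between $C_{\nu}$ and $\hat{C}_{\nu}$ via Theorem \ref{Capacitary Poincare} and Lemma \ref{Average Projection}, is exactly the tool that certifies this exchange, and the resulting additive correction $\gamma_n^{\beta/(\beta-\alpha)}\|f^{*}\|_{\mathcal{H}}^2$ is precisely what appears next to the expected regularization bias $\lambda \|f^{*}\|_{\mathcal{H}}^2$ in the final bound.
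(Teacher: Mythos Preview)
Your proposal is correct and follows essentially the same route as the paper: you write the deviation as an i.i.d.\ mean-zero sum, apply Markov/Chebyshev, extract the $\gamma_n/\lambda$ prefactor via Lemma \ref{Regularized Potential Norm}, and split the scalar residual into the noise term $\rho^2$, the bias $\|f^{*}-\hat{f}_{\lambda}\|^2_{L^2(\nu)}$ (handled via the resolvent identity and Lemma \ref{Operator Approximation Error}), and the capacitary approximation $\int(\hat{f}_{\lambda}-P^{\gamma_n}_x\hat{f}_{\lambda})^2d\nu$ (handled via Theorem \ref{Capacitary Poincare}). The only cosmetic differences are that the paper splits the scalar part into three pieces at once rather than grouping the last two first, and it bounds $\|(C_{\nu}+\lambda)^{1/2}(\hat{C}_{\nu}+\lambda)^{-1}\|_{\mathcal{H}\to\mathcal{H}}\preceq \lambda^{-1/2}$ directly via a Neumann series (yielding $\|f^{*}-\hat{f}_{\lambda}\|^2_{L^2(\nu)}\preceq \lambda\|f^{*}\|^2_{\mathcal{H}}$ with no additive correction), whereas your factorization through $\|C_{\nu}^{1/2}(\hat{C}_{\nu}+\lambda)^{-1/2}\|$ picks up the harmless extra $\gamma_n^{\beta/(\beta-\alpha)}\|f^{*}\|^2_{\mathcal{H}}$ that is already present in the target bound from the Poincar\'e term.
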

\begin{proof}
Let:
\begin{equation*}
    G_n \equiv \frac{\gamma_n}{n}\sum_{i = 1}^n (Y_i - P_{X_i}^{\gamma_n}\hat{f}_{\lambda}) (C_{\nu} + \lambda)^{-\frac{1}{2}} e_{i, n}
\end{equation*}
Then, from definitions \eqref{eq: Cutoff Covariance Kernel} and \eqref{eq: Sample Cutoff Covariance Kernel}, we may write:
\begin{align*}
    (C_{\nu} + \lambda)^{-\frac{1}{2}}\Big(\Big(\frac{1}{n}\sum_{i = 1}^n \gamma_n Y_i e_{i, n} - \int_{\mathcal{X}} \gamma_n e_{y, n}f^{*}(y)d\nu(y)\Big) 
 - (\hat{C}_{D}\hat{f}_{\lambda} - \hat{C}_{\nu}\hat{f}_{\lambda})\Big) & = G_n - \mathbb{E}_P[G_n] 
\end{align*}
We have:
\begin{align}
    \mathbb{E}_{\mathbb{P}}[\|G_n - \mathbb{E}_P[G_n] \|^2_{\mathcal{H}}]  &  \leq \frac{\gamma^2_n}{n}\mathbb{E}_{\mathbb{P}}[\|(C_{\nu} + \lambda)^{-\frac{1}{2}}e_{x, 
 n}\|^2_{\mathcal{H}}(y - P^{\gamma_n}_{x}\hat{f}_{\lambda})^2] \nonumber  \\
 & = \frac{\gamma^2_n}{n}\mathbb{E}_{\mathbb{P}}[\|(C_{\nu} + \lambda)^{-\frac{1}{2}}e_{x, n}\|^2_{\mathcal{H}}(y - f^{*}(x) + f^{*}(x) -\hat{f}_{\lambda}(x) \nonumber \\
 &  \hspace{2mm} - P^{\gamma_n}_{x}\hat{f}_{\lambda} + \hat{f}_{\lambda}(x))^2] \nonumber \\ 
    & \leq  \frac{3\gamma^2_n}{n}\mathbb{E}_{\mathbb{P}}[\|(C_{\nu} + \lambda)^{-\frac{1}{2}}e_{x, n}\|^2_{\mathcal{H}}((y - f^{*}(x))^2 + (f^{*}(x) -\hat{f}_{\lambda}(x))^2 \nonumber \\
    & \hspace{2mm} + (P^{\gamma_n}_{x}\hat{f}_{\lambda} - \hat{f}_{\lambda}(x))^2)] \nonumber \\
    & \leq \frac{3\gamma_n}{n\lambda}\mathbb{E}_{\mathbb{P}}[(y - f^{*}(x))^2 + (f^{*}(x) -\hat{f}_{\lambda}(x))^2 + (P^{\gamma_n}_{x}\hat{f}_{\lambda} - \hat{f}_{\lambda}(x))^2] \label{eq: Variance Decomposition}
\end{align}
where \eqref{eq: Variance Decomposition} follows from Lemma \ref{Regularized Potential Norm}. We first focus on bounding the second term in \eqref{eq: Variance Decomposition}. We observe that:
\begin{equation*}
    f^{*} - \hat{f}_{\lambda} = \lambda (\hat{C}_{\nu} + \lambda)^{-1} f^{*} 
\end{equation*}
by definition \eqref{eq: Regularized Cutoff}. Hence, by definition, we have that:
\begin{align}
    \mathbb{E}_{\mathbb{P}}[(f^{*}(x) - \hat{f}_{\lambda}(x))^2] & = \lambda^2 \mathcal{E}(C^{\frac{1}{2}}_{\nu}(\hat{C}_{\nu} + \lambda)^{-1} f^{*}, C_{\nu}^{\frac{1}{2}}(\hat{C}_{\nu} + \lambda)^{-1} f^{*}) \nonumber \\
    & = \lambda^2 \mathcal{E}(C^{\frac{1}{2}}_{\nu}(C_{\nu} + \lambda)^{-\frac{1}{2}}(C_{\nu} + \lambda)^{\frac{1}{2}}(\hat{C}_{\nu} + \lambda)^{-1} f^{*}, C^{\frac{1}{2}}_{\nu}(C_{\nu} + \lambda)^{-\frac{1}{2}}(C_{\nu} + \lambda)^{\frac{1}{2}}(\hat{C}_{\nu} + \lambda)^{-1} f^{*}) \nonumber \\
    & \leq \lambda^2 \|(C_{\nu} + \lambda)^{\frac{1}{2}}(\hat{C}_{\nu} + \lambda)^{-1}\|^2_{\mathcal{H} \to \mathcal{H}} \mathcal{E}(f^{*}, f^{*}) \label{eq: Population Error with Projection Average}
\end{align}
We estimate $\|(C_{\nu} + \lambda)^{\frac{1}{2}}(\hat{C}_{\nu} + \lambda)^{-1}\|_{\mathcal{H} \to \mathcal{H}}$, using Lemma \ref{Operator Approximation Error}: 
\begin{align}
    \|(C_{\nu} + \lambda)^{\frac{1}{2}}(\hat{C}_{\nu} + \lambda)^{-1}\|_{\mathcal{H} \to \mathcal{H}} & = \|(I - (C_{\nu} + \lambda)^{-\frac{1}{2}}(C_{\nu} - \hat{C}_{\nu})(C_{\nu} + \lambda)^{-\frac{1}{2}})^{-1}(C_{\nu} + \lambda)^{-\frac{1}{2}}\|_{\mathcal{H} \to \mathcal{H}} \nonumber \\
    & \leq \Big(\sum_{i = 0}^{\infty} \|(C_{\nu} + \lambda)^{-\frac{1}{2}}(C_{\nu} - \hat{C}_{\nu})(C_{\nu} + \lambda)^{-\frac{1}{2}}\|_{\mathcal{H} \to \mathcal{H}}^i \Big)\|(C_{\nu} + \lambda)^{-\frac{1}{2}}\|_{\mathcal{H} \to \mathcal{H}} \label{eq: Neumann Series} \\
    & \preceq \frac{1}{\sqrt{\lambda}}\sum_{i = 0}^{\infty} \Big(\frac{K^2\gamma_n^{\frac{2\beta}{\beta - \alpha}}}{\lambda^2} + \frac{K\gamma_n^{\frac{\beta}{\beta - \alpha}}}{\lambda}\Big)^{\frac{i}{2}} \label{eq: Apply Operator Approximation Error} \\
    & \leq \frac{1}{\sqrt{\lambda}}\Big(1 - \frac{\sqrt{3}}{2} \Big)^{-1} \label{eq: Ratio Decay Assumption}
\end{align}
where \eqref{eq: Neumann Series} follows from a Neumann series expansion, \eqref{eq: Apply Operator Approximation Error} follows from Lemma \ref{Operator Approximation Error}, and \eqref{eq: Ratio Decay Assumption} follows from the assumption $\frac{\gamma_n^{\frac{\beta}{\beta - \alpha}}}{\lambda} \leq 0.5K^{-1}$. Substituting  back into \eqref{eq: Population Error with Projection Average}, we obtain:
\begin{equation}
\label{eq: Second Term Bound}
 \mathbb{E}_{\mathbb{P}}[(f^{*}(x) - \hat{f}_{\lambda}(x))^2] \preceq \lambda \mathcal{E}(f^{*}, f^{*})
\end{equation}
Now, we bound the third term in \eqref{eq: Variance Decomposition} using Lemma \ref{Capacitary Poincare}. Indeed, we have that:
\begin{equation}
\label{eq: Third Term Bound}
    \mathbb{E}_{\mathbb{P}}[(P^{\gamma_n}_{x}\hat{f}_{\lambda} - \hat{f}_{\lambda}(x))^2] \preceq \gamma_n^{\frac{\beta}{\beta - \alpha}} \mathcal{E}(\hat{f}_{\lambda}, \hat{f}_{\lambda}) \leq \gamma_n^{\frac{\beta}{\beta - \alpha}} \mathcal{E}(f^{*}, f^{*})
\end{equation}
by definition of $\hat{f}_{\lambda}$ in \eqref{eq: Regularized Cutoff}. Finally, the first term is simply equal to the noise variance:
\begin{equation}
\label{eq: Noise Term}
    \mathbb{E}_{\mathbb{P}}[(y - f^{*}(x))^2] = \rho^2
\end{equation}
Combining \eqref{eq: Noise Term} with \eqref{eq: Second Term Bound} and \eqref{eq: Third Term Bound}, and recalling $\|f^{*}\|^2_{\mathcal{H}} = \mathcal{E}(f^{*}, f^{*})$ from \eqref{eq: 0-norm}, we obtain our result from Markov's inequality. 
\end{proof}

\subsection{Bounding the Approximation Error}

In this section we estimate the approximation error $||\hat{f}_{\lambda} - f_{\lambda}||_{L^2(\nu)}$ between the regularized population estimators, incurred by the approximation of pointwise values by their capacitary means. Unsuprisingly, Theorem \ref{Capacitary Poincare} and Lemma \ref{Average Projection}, along with the auxiliary results of section \ref{Auxiliary Estimates} will be pivotal. We may decompose $||\hat{f}_{\lambda} - f_{\lambda}||_{L^2(\nu)}$, similarly to \eqref{eq: Key Factor}. Namely, we have that:

\begin{align}
    ||\hat{f}_{\lambda} - f_{\lambda}||_{L^2(\nu)} & \leq \|C^{\frac{1}{2}}_{\nu}(C_{\nu} + \lambda)^{-\frac{1}{2}}\|_{\mathcal{H} \to \mathcal{H}} \|(C_{\nu} + \lambda)^{\frac{1}{2}}(\hat{C}_{\nu} + \lambda)^{-1}(C_{\nu} + \lambda)^{\frac{1}{2}}\|_{\mathcal{H} \to \mathcal{H}} \cdot \nonumber \\
    & \|(C_{\nu} + \lambda)^{-\frac{1}{2}}(\hat{C}_{\nu}f^{*} - (\hat{C}_{\nu} + \lambda)f_{\lambda})\|_{\mathcal{H}} \nonumber \\ 
    & \leq  \|(C_{\nu} + \lambda)^{\frac{1}{2}}(\hat{C}_{\nu} + \lambda)^{-1}(C_{\nu} + \lambda)^{\frac{1}{2}}\|_{\mathcal{H} \to \mathcal{H}}  \|(C_{\nu} + \lambda)^{-\frac{1}{2}}(\hat{C}_{\nu}f^{*} - C_{\nu}f^{*} + (C_{\nu}-\hat{C}_{\nu})f_{\lambda})\|_{\mathcal{H}} \label{eq: Approximation Breakdown}
\end{align}

\begin{lemma}
\label{Approximation Neumann}
Suppose $\gamma_n^{\frac{\beta}{\beta - \alpha}} \leq 0.5 K^{-1}\lambda$ (where $K$ is the constant from Lemma \ref{Operator Approximation Error}). Then:
\begin{equation*}
    \|(C_{\nu} + \lambda)^{\frac{1}{2}}(\hat{C}_{\nu} + \lambda)^{-1}(C_{\nu} + \lambda)^{\frac{1}{2}}\|_{\mathcal{H} \to \mathcal{H}} \preceq 1
\end{equation*}
\end{lemma}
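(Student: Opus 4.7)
The plan is to reduce this to a Neumann-series argument, exactly parallel to the estimate for $\|(C_\nu+\lambda)^{1/2}(\hat C_\nu+\lambda)^{-1}\|_{\mathcal{H}\to\mathcal{H}}$ already performed inside the proof of Lemma \ref{Vector Concentration Main} (see \eqref{eq: Neumann Series}--\eqref{eq: Ratio Decay Assumption}). First I would write the identity
\begin{equation*}
\hat C_\nu+\lambda \;=\; (C_\nu+\lambda) - (C_\nu-\hat C_\nu),
\end{equation*}
so that, after factoring out $(C_\nu+\lambda)^{1/2}$ on both sides,
\begin{equation*}
(C_\nu+\lambda)^{1/2}(\hat C_\nu+\lambda)^{-1}(C_\nu+\lambda)^{1/2} \;=\; (I - A)^{-1},
\end{equation*}
where
\begin{equation*}
A \;\equiv\; (C_\nu+\lambda)^{-1/2}(C_\nu-\hat C_\nu)(C_\nu+\lambda)^{-1/2}.
\end{equation*}

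Next I would invoke Lemma \ref{Operator Approximation Error} (the deterministic operator approximation bound already used in \eqref{eq: Apply Operator Approximation Error}) to estimate
\begin{equation*}
\|A\|_{\mathcal{H}\to\mathcal{H}} \;\leq\; \sqrt{\frac{K^{2}\gamma_n^{\,2\beta/(\beta-\alpha)}}{\lambda^{2}} + \frac{K\gamma_n^{\,\beta/(\beta-\alpha)}}{\lambda}}.
\end{equation*}
Using the standing hypothesis $\gamma_n^{\beta/(\beta-\alpha)} \leq 0.5\,K^{-1}\lambda$, the two summands inside the square root are bounded by $1/4$ and $1/2$ respectively, so $\|A\|_{\mathcal{H}\to\mathcal{H}} \leq \sqrt{3}/2 < 1$.

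With this strict contraction estimate in hand, the Neumann series $(I-A)^{-1} = \sum_{i\geq 0} A^{i}$ converges absolutely in operator norm, giving
\begin{equation*}
\bigl\|(C_\nu+\lambda)^{1/2}(\hat C_\nu+\lambda)^{-1}(C_\nu+\lambda)^{1/2}\bigr\|_{\mathcal{H}\to\mathcal{H}} \;\leq\; \sum_{i=0}^{\infty}\Bigl(\tfrac{\sqrt{3}}{2}\Bigr)^{i} \;=\; \Bigl(1-\tfrac{\sqrt{3}}{2}\Bigr)^{-1},
\end{equation*}
which is an absolute constant, proving the claim. The only substantive ingredient is Lemma \ref{Operator Approximation Error}; everything else is bookkeeping, and I do not anticipate any real obstacle. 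The main conceptual point is simply that the regularization budget $\lambda$ dominates the capacitary-truncation scale $\gamma_n^{\beta/(\beta-\alpha)}$ by the factor $2K$, which is exactly what makes $A$ a contraction and turns $\hat C_\nu+\lambda$ into a uniformly well-conditioned perturbation of $C_\nu+\lambda$.
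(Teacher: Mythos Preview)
Your proposal is correct and matches the paper's proof essentially line for line: the paper writes $(C_\nu+\lambda)^{1/2}(\hat C_\nu+\lambda)^{-1}(C_\nu+\lambda)^{1/2} = (I-A)^{-1}$ with the same $A$, invokes Lemma~\ref{Operator Approximation Error} together with the hypothesis $\gamma_n^{\beta/(\beta-\alpha)} \le 0.5K^{-1}\lambda$ to get $\|A\|_{\mathcal{H}\to\mathcal{H}} \le \sqrt{3/4}$, and sums the Neumann series to $(1-\sqrt{3}/2)^{-1}$.
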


\begin{proof}
Like in section \ref{Operator Concentration Auxiliary}, we estimate $\|(C_{\nu} + \lambda)^{\frac{1}{2}}(\hat{C}_{\nu} + \lambda)^{-1}(C_{\nu} + \lambda)^{\frac{1}{2}}\|_{\mathcal{H} \to \mathcal{H}}$ using a Neumannn series expansion:
\begin{align}
    \|(C_{\nu} + \lambda)^{\frac{1}{2}}(\hat{C}_{\nu} + \lambda)^{-1}(C_{\nu} + \lambda)^{\frac{1}{2}}\|_{\mathcal{H} \to \mathcal{H}} & = \|(I - (C_{\nu} + \lambda)^{-\frac{1}{2}}(C_{\nu} - \hat{C}_{\nu})(C_{\nu} + \lambda)^{-\frac{1}{2}})^{-1}\|_{\mathcal{H} \to \mathcal{H}} \nonumber \\
    & \leq \sum_{i = 0}^{\infty} \|(C_{\nu} + \lambda)^{-\frac{1}{2}}(C_{\nu} - \hat{C}_{\nu})(C_{\nu} + \lambda)^{-\frac{1}{2}}\|^i_{\mathcal{H} \to \mathcal{H}} \nonumber \\
    & \leq \Big(1 - \sqrt{\frac{3}{4}}\Big)^{-1} \label{eq: Apply Operator Approximation 2}
\end{align}
where \eqref{eq: Apply Operator Approximation 2} follows from Lemma \ref{Operator Approximation Error} and the assumption that $\gamma_n^{\frac{\beta}{\beta - \alpha}} \leq 0.5 K^{-1}\lambda$
\end{proof}

\begin{lemma}
\label{Approximation Vector}
Suppose $\gamma_n^{\frac{\beta}{\beta - \alpha}} \leq 0.5 K^{-1}\lambda$ (where $K$ is the constant from Lemma \ref{Operator Approximation Error})
\begin{equation*}
     \|(C_{\nu} + \lambda)^{-\frac{1}{2}}(\hat{C}_{\nu}f^{*} - C_{\nu}f^{*} + (C_{\nu}-\hat{C}_{\nu})f_{\lambda})\|_{\mathcal{H}} \preceq \sqrt{\gamma_n^{\frac{\beta}{\beta - \alpha}}} ||f^{*}||_{\mathcal{H}} 
\end{equation*}
\end{lemma}
\begin{proof}
We have that:
\begin{align}
    ||f^{*} - f_{\lambda}||_{\mathcal{H}} & = \|\lambda(C_{\nu} + \lambda)^{-1}f^{*}\|_{\mathcal{H}} \leq \|f^{*}\|_{\mathcal{H}} \nonumber \\
    ||f^{*} - f_{\lambda}||_{L^2(\nu)} & = \|\lambda C^{\frac{1}{2}}_{\nu}(C_{\nu} + \lambda)^{-1}f^{*}\|_{\mathcal{H}} \leq \sqrt{\lambda \|f^{*}\|^2_{\mathcal{H}}} \label{eq: Apply Green Reproducing}
\end{align}
where in \eqref{eq: Apply Green Reproducing} we have applied \eqref{eq: Bias Bound}. The claim then follows from Lemma \ref{Sampling Approximation Error} and the assumption that $\gamma_n^{\frac{\beta}{\beta - \alpha}} \leq 0.5 K^{-1}\lambda$. 
\end{proof}

\subsection{Finishing the Proof}

We first note that with the prescribed choice of $\lambda \asymp n^{-\frac{\beta}{\alpha + \beta}}$ and $\gamma_n \asymp n^{\frac{\alpha - \beta}{\alpha + \beta}}$, we have that, as $n \to \infty$:
\begin{align}
    \gamma_n^{\frac{\beta}{\beta - \alpha}} & \asymp n^{-\frac{\beta}{\alpha + \beta}} \asymp \lambda \label{eq: Escape-to-Reg Ratio} \\
    \frac{\gamma_n}{\lambda}  & \asymp \frac{n^{\frac{\alpha - \beta}{\alpha + \beta}}}{n^{-\frac{\beta}{\alpha + \beta}}} \asymp n^{\frac{\alpha}{\alpha + \beta}} \preceq n \label{eq: Cutoff-to-Reg Ratio}
\end{align}
Thus, with an appropriate choice of proportionality constant (e.g. $\lambda = 6K\gamma_n^{\frac{\beta}{\beta - \alpha}}$), we have that the conditions of Lemmas \ref{Operator Concentration}, \ref{Operator Concentration Main}, \ref{Vector Concentration Main}, \ref{Approximation Neumann}, \ref{Approximation Vector} are satisfied for sufficiently large $n \geq 1$. Hence, combining these results with \eqref{eq: Bias Bound}, \eqref{eq: Apply Reproducing Covariance}, \eqref{eq: Key Factor}, \eqref{eq: Final Breakdown}, Lemma \ref{Average Regularized Projection}, and \eqref{eq: Approximation Breakdown} we obtain, with probability $1 - 3\delta$:

\begin{align}
    ||\hat{f}_{D, \lambda} - f^{*}||^2_{L^2(\nu)} & \preceq \lambda ||f^{*}||^2_{\mathcal{H}} + \frac{\tilde{C}\gamma_n}{\delta \lambda n} \Big(\rho^2 + \lambda ||f^{*}||^2_{\mathcal{H}} + \gamma_n^{\frac{\beta}{\beta - \alpha}} ||f^{*}||^2_{\mathcal{H}}\Big) \nonumber \\
    & + ||f^{*}||^2_{\mathcal{H}}\Bigg(\frac{\gamma_n^{\frac{2\beta}{\beta - \alpha}}}{\lambda} + \gamma_n^{\frac{\beta}{\beta - \alpha}}\Bigg) + ||f^{*}||^2_{\mathcal{H}}\gamma_n^{\frac{\beta}{\beta - \alpha}} \nonumber \\
    & \preceq \Big(\lambda + \frac{\gamma_n}{\delta \lambda n} + \gamma_n^{\frac{\beta}{\beta - \alpha}}\Big)||f^{*}||^2_{\mathcal{H}} \label{eq: Balancing Breakdown}
\end{align}
where in \eqref{eq: Balancing Breakdown}, we have applied $\gamma_n^{\frac{\beta}{\beta - \alpha}} \asymp \lambda \to 0$ as $n \to \infty$ from \eqref{eq: Escape-to-Reg Ratio} and recalled $||f||^2_{\mathcal{H}} = \mathcal{E}(f, f)$ from \eqref{eq: 0-norm}. Substituting our choices of $\lambda \asymp n^{-\frac{\beta}{\alpha + \beta}}$ and $\gamma_n \asymp n^{\frac{\alpha - \beta}{\alpha + \beta}}$ we obtain our result. 

\section{Proof of Proposition \ref{Covering Numbers} and Theorem \ref{ERM Risk}}
\label{ERM Proofs}
\begin{lemma}
\label{Modulus of Continuity}
For $x, y \in \mathcal{X}$:
\begin{equation*}
    \gamma^2 \mathcal{E}(e^{\gamma}_{x} - e^{\gamma}_{y}, e^{\gamma}_{x} - e^{\gamma}_{y}) \preceq \gamma^{\frac{\alpha - 1}{\alpha - 2}} d(y, x)
\end{equation*}
\end{lemma}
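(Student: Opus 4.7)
The plan is to reduce the left-hand side to an integral against the equilibrium measure $\nu_x$ on $\partial\mathcal{O}_x$, and then control the integrand using Harnack-plus-gradient estimates for the Green function, exactly as in the proof of Theorem~\ref{Capacitary Poincare}. Concretely, expand
\begin{equation*}
\gamma^2\mathcal{E}(e^{\gamma}_x-e^{\gamma}_y,e^{\gamma}_x-e^{\gamma}_y)=\mathcal{E}(\gamma e^{\gamma}_x,\gamma e^{\gamma}_x)-2\mathcal{E}(\gamma e^{\gamma}_x,\gamma e^{\gamma}_y)+\mathcal{E}(\gamma e^{\gamma}_y,\gamma e^{\gamma}_y)=2\gamma-2\mathcal{E}(\gamma e^{\gamma}_x,\gamma e^{\gamma}_y),
\end{equation*}
using $\mathcal{E}(e^{\gamma}_x,e^{\gamma}_x)=\mathcal{E}(e^{\gamma}_y,e^{\gamma}_y)=\mathrm{cap}(\mathcal{O}_x)=\gamma^{-1}$. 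The cross term is computed via the reproducing property of the equilibrium measure $\mathcal{E}(h,e^{\gamma}_x)=\int h\,d\nu_x$ combined with the pointwise identity $\gamma e^{\gamma}_y=G(y,\cdot)\wedge\gamma$, giving $\mathcal{E}(\gamma e^{\gamma}_x,\gamma e^{\gamma}_y)=\gamma\int(G(y,\cdot)\wedge\gamma)\,d\nu_x$. Since $\nu_x$ is supported on $\partial\mathcal{O}_x=\{G(x,\cdot)=\gamma\}$, one has $\int\gamma\,d\nu_x=\gamma\nu_x(\partial\mathcal{O}_x)=1$, so the identity simplifies to
\begin{equation*}
\gamma^2\mathcal{E}(e^{\gamma}_x-e^{\gamma}_y,e^{\gamma}_x-e^{\gamma}_y)=2\gamma\int_{\partial\mathcal{O}_x}(\gamma-G(y,z))_{+}\,d\nu_x(z)=2\gamma\int_{\partial\mathcal{O}_x}(G(x,z)-G(y,z))_{+}\,d\nu_x(z).
\end{equation*}

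Write $r\asymp\gamma^{-1/(\alpha-2)}$ for the intrinsic scale of the obstacle. If $d(x,y)\gtrsim r$ the bound is immediate from $(\gamma-G(y,\cdot))_+\leq\gamma$ and $\nu_x(\partial\mathcal{O}_x)=\gamma^{-1}$: the left-hand side is at most $2\gamma$, while $\gamma^{(\alpha-1)/(\alpha-2)}d(x,y)\gtrsim\gamma\cdot d(x,y)/r\gtrsim\gamma$. So we may assume $d(x,y)<r/4$. For any $z\in\partial\mathcal{O}_x$, the two-sided Green estimates available from Theorem~7.5 of \cite{grigor2014heat} under \eqref{eq: VD}, \eqref{eq: Poincare Inequality}, \eqref{eq: Exit Upper}/\eqref{eq: Exit Lower}, and \eqref{eq: Walk Dimension 2} give $d(x,z)\asymp r$, so in particular the function $y'\mapsto G(y',z)$ is harmonic on $B(x,r/2)$. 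The elliptic Harnack inequality (implied by \eqref{eq: VD}, \eqref{eq: Poincare Inequality}, \eqref{eq: Cutoff Sobolev}) then yields $\sup_{B(x,r/4)}G(\cdot,z)\preceq G(x,z)=\gamma$.

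Now invoke \eqref{eq: Bakry-Emery}: via the gradient estimates of \cite{coulhon2020gradient}, harmonic functions on $B(x,r/2)$ are Lipschitz in the intrinsic distance $d_{\mathcal{E}}$ with Lipschitz constant controlled by $r^{-1}\sup_{B(x,r/2)}|u|$. Because we assume $(\mathcal{M},d)$ is a length space and \eqref{eq: Walk Dimension 2} holds, \eqref{Metrics are Equivalent} upgrades this to a Lipschitz bound in the extrinsic metric $d$. Applied to $u(y')=G(y',z)$ it gives, for any $z\in\partial\mathcal{O}_x$,
\begin{equation*}
|G(y,z)-G(x,z)|\preceq\frac{d(x,y)}{r}\cdot\sup_{B(x,r/2)}G(\cdot,z)\preceq\frac{\gamma\,d(x,y)}{r}.
\end{equation*}
Plugging this into the integral representation and using $\nu_x(\partial\mathcal{O}_x)=\gamma^{-1}$,
\begin{equation*}
\gamma^2\mathcal{E}(e^{\gamma}_x-e^{\gamma}_y,e^{\gamma}_x-e^{\gamma}_y)\leq2\gamma\cdot\frac{C\gamma\,d(x,y)}{r}\cdot\frac{1}{\gamma}=\frac{2C\gamma\,d(x,y)}{r}\asymp\gamma^{(\alpha-1)/(\alpha-2)}\,d(x,y),
\end{equation*}
which is the claim.

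The main obstacle in making this rigorous is step three: invoking a \emph{quantitative} Lipschitz estimate for $G(\cdot,z)$ with constants independent of the scale $r$ and the point $z$. This requires the scale-invariant form of the Bakry-Emery gradient estimate from \cite{coulhon2020gradient}, together with a careful combination with elliptic Harnack to bound the harmonic function's sup-norm on $B(x,r/4)$ in terms of its value at $x$. All of the remaining reasoning is algebraic manipulation and direct application of two-sided Green function estimates that are already used in the proof of Theorem~\ref{Capacitary Poincare}.
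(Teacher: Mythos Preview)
Your argument is correct and follows essentially the same route as the paper: expand $\gamma^2\mathcal{E}(e^{\gamma}_x-e^{\gamma}_y,e^{\gamma}_x-e^{\gamma}_y)=2\gamma-2\gamma^2\mathcal{E}(e^{\gamma}_x,e^{\gamma}_y)$, represent the cross term as an integral against an equilibrium measure, and bound the integrand $|G(x,z)-G(y,z)|$ via the Li--Yau/Cheng--Yau gradient estimate for harmonic functions guaranteed by \eqref{eq: Bakry-Emery} and \cite{coulhon2020gradient}, with the far-apart case handled trivially. The only cosmetic differences are that the paper integrates against $\nu^{\gamma}_y$ rather than $\nu_x$ (immaterial by symmetry) and bounds $\sup_{B}G(\cdot,z)$ directly from the two-sided Green estimate \eqref{eq: Green Bounds} instead of via elliptic Harnack; both routes give the same $\preceq\gamma$ bound.
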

\begin{proof}
Recall from Theorem 1.2 in \cite{grigor2015generalized} (see also Theorem 7.5 in \cite{grigor2014heat}) that, for any compact set $\mathcal{S} \subset \mathcal{M}$, there exists a constant $C > 1$ such that:
\begin{equation}
\label{eq: Green Bounds}
    C^{-1}d(x, y)^{2 - \alpha} \leq g(x, y) \leq C d(x, y)^{2 - \alpha}
\end{equation}
for $y \in \mathcal{M} \setminus \{x\}$ and $x \in \mathcal{S}$. Let $x, y \in \mathcal{X}$ with $d(x, y) \leq \frac{1}{4}(C^{-1}\gamma)^{\frac{1}{2 - \alpha}} \equiv r$, where $C > 1$ is from \eqref{eq: Green Bounds}. Then, we have that:
\begin{align}
    \gamma^2 \mathcal{E}(e^{\gamma}_{x}, e^{\gamma}_{y}) & = \gamma^2 \int e^{\gamma}_{x}(z) d\nu^{\gamma}_{y}(z) \label{eq: Eq Intro}\\
    & = \gamma -  \gamma^2 \int (1 - e^{\gamma}_{x}(z)) d\nu^{\gamma}_{y}(z)                             \label{eq: Eq Mass} \\
    & = \gamma - \gamma \int (g(z, y) \wedge \gamma - g(z, x) \wedge \gamma)d\nu^{\gamma}_{y}(z) \label{eq: Definition of Potential}\\
    & \geq \gamma - \|\nabla(g(z, \cdot))\|_{L^{\infty}(B(y, r))} d_{\mathcal{E}}(y, x) \label{eq: Intrinsic Definition} \\ 
    & \succeq \gamma - \|\nabla(g(z, \cdot))\|_{L^{\infty}(B(y, r))} d(y, x) \label{eq: Metric Equivalence}  \\
    & \succeq \gamma - \frac{\|g(z, \cdot)\|_{L^{\infty}(B(y, 2r))}d(y, x)}{r}  \label{eq: Apply Li-Yau}
\end{align}
where in \eqref{eq: Eq Intro}, $\nu^{\gamma}_y(z)$ is the equilibrium measure associated with the potential  $e^{\gamma}_y(z)$, \eqref{eq: Eq Mass} uses the fact that $\nu^{\gamma}_y(\mathcal{M}) = \gamma^{-1}$ by definition, \eqref{eq: Definition of Potential} applies the definition \eqref{Green Potential}, \eqref{eq: Intrinsic Definition} applies the definition of $d_{\mathcal{E}}$, \eqref{eq: Metric Equivalence} applies \eqref{Metrics are Equivalent}, and \eqref{eq: Apply Li-Yau} invokes \eqref{eq: Bakry-Emery} and applies the Li-Yau gradient estimate guaranteed by Theorem 1.2 in \cite{coulhon2020gradient}. We may apply the latter result as $g(z, \cdot)$ is harmonic in $B(y, 2r)$ as for any $w \in B(y, 2r)$, $d(w, z) \geq d(z, y) - d(y, w) \geq 4r - 2r = 2r$ by \eqref{eq: Green Bounds}. Moreover, we have that for $w \in B(y, 2r)$:
\begin{equation*}
    g(z, w) \leq C(2r)^{2 - \alpha} = C^2 2^{2- \alpha} \cdot C^{-1}r^{2 - \alpha} \preceq \gamma
\end{equation*}
again by \eqref{eq: Green Bounds}. Hence, we have that:
\begin{align*}
    \gamma^2 \mathcal{E}(e^{\gamma}_{x} - e^{\gamma}_{y}, e^{\gamma}_{x} - e^{\gamma}_{y}) & = \gamma^2(\mathcal{E}(e^{\gamma}_{x}, e^{\gamma}_{x}) - 2\mathcal{E}(e^{\gamma}_{x}, e^{\gamma}_{y}) + \mathcal{E}(e^{\gamma}_{y}, e^{\gamma}_{y})) \\
    & = 2\gamma - 2\gamma^2\mathcal{E}(e^{\gamma}_{x}, e^{\gamma}_{y}) \\
    & \preceq \frac{2\|g(z, \cdot)\|_{L^{\infty}(B(y, 2r))}d(y, x)}{r} \\
    & \preceq \frac{\gamma}{r} \cdot d(y, x)
\end{align*}
Moreover, when $d(x, y) > r$, we have from Cauchy-Schwarz:
\begin{align*}
    \gamma^2 \mathcal{E}(e^{\gamma}_{x} - e^{\gamma}_{y}, e^{\gamma}_{x} - e^{\gamma}_{y}) & \leq 2\gamma^2 (\mathcal{E}(e^{\gamma}_{x}, e^{\gamma}_{x})  + \mathcal{E}(e^{\gamma}_{y}, e^{\gamma}_{y})) \\
    & \preceq \gamma \\
    & \leq \frac{\gamma}{r} \cdot d(y, x)
\end{align*}
Substituting our choice for $r$, we obtain the result. 
\end{proof}

\begin{proof}[Proof of Proposition \ref{Covering Numbers}]
We apply the approach of \cite{kuelbs1993metric} to estimate the covering numbers of $\mathcal{H}_{\gamma}$ by estimating small ball probabilities for the Gaussian measure $\lambda_{\mathcal{H}}$ with Cameron-Martin space $\mathcal{H}$. Namely, we wish to estimate:
\begin{equation*}
    \lambda_{\mathcal{H}}(K_{\gamma}(\epsilon)) 
\end{equation*}
as $\epsilon \to 0$, where we recall the definition of $K_{\gamma}(\epsilon)$ in \eqref{eq: Regularized Sup Ball}. Let $h$ be a random vector with distribution $\lambda_{\mathcal{H}}$. We study the centered Gaussian field $\{h_{x}\}_{x \in \mathcal{X}}$ over $\mathcal{X}$ given by:
\begin{align*}
    h_x & = \langle h, \gamma e^{\gamma}_x \rangle \\
    \text{Cov}(h_x, h_y) & = \mathcal{E}(\gamma e^{\gamma}_x, \gamma e^{\gamma}_y)
\end{align*}
Note, here we suppress the dependence on $\gamma > 0$ in writing $\{h_{x}\}_{x \in \mathcal{X}}$ for notational simplicity. We define the following canonical distance associated with the field $\{h_{x}\}_{x \in \mathcal{X}}$:
\begin{equation*}
    d_{\gamma}(x, y) = \sqrt{\text{Cov}[(h_x - h_y)^2]} = \sqrt{\gamma^2 \mathcal{E}(e^{\gamma}_{x} - e^{\gamma}_{y}, e^{\gamma}_{x} - e^{\gamma}_{y})}
\end{equation*}
Since, we are only interested in asymptotic behavior, we assume w.l.o.g. $\text{diam}(\mathcal{X}, d_{\gamma}) = 1$ (this can always be achieved by rescaling $\gamma$ without affecting asymptotics). We begin by following the approach in \cite{talagrand1993new} (see p. 524 therein) to demonstrate:
\begin{equation*}
    \lambda_{\mathcal{H}}(K_{\gamma}(\epsilon)) \succeq \text{exp}\Big(-C_{\alpha}\gamma^{\frac{\alpha(\alpha - 1)}{(\alpha - 2)}}\epsilon^{-2\alpha}\Big)
\end{equation*}
and $C_{\alpha} > 0$ is some constant depending only on $\alpha > 0$. As most the details follow from \cite{talagrand1993new}, we sketch the argument here. Let $\{\epsilon_r\}_{r \geq 1}$ be a sequence such that $\frac{\epsilon}{2} = \sum_{r \geq 1} \epsilon_r$. For $r \geq 1$, let $S_r$ be a $2^{-r}$-net of $\mathcal{X}$ in the metric $d_{\gamma}$ and let $S_0 = \{x_0\}$ for some fixed $x_0 \in \mathcal{X}$. For $x \in \mathcal{X}$, define $\psi_r(x) \in S_r$  by $\psi_{r}(x) = \text{arg} \min_{y \in S_{r}} d_{\gamma}(x, y)$, and observe $\psi_r(x) \to x$ as $r \to \infty$. Then, we may write that:
\begin{align*}
    \lambda_{\mathcal{H}}(\epsilon K_{\gamma}) & = \lambda_{\mathcal{H}}\Big(\sup_{x \in \mathcal{X}} \langle h, \gamma e^{\gamma}_{x} \rangle \leq \epsilon\Big) \\
    & \geq \lambda_{\mathcal{H}}\Big(\bigcap_{r \geq 1} \{\sup_{y \in S_r}|\langle h, \gamma (e^{\gamma}_{y} - e^{\gamma}_{\psi_{r - 1}(y)}) \rangle| \leq \epsilon_r\}, |\langle h, \gamma e^{\gamma}_{x_0} \rangle| \leq \frac{\epsilon}{2}\Big) 
\end{align*}
Let $\Phi$ be the survival function of a standard normal (i.e. $1 - \Phi$ is the normal CDF). Hence, by the gaussianity of $\lambda$ and an application of Sidak's inequality \cite{sidak1968multivariate} (see also Lemma 4.1 in \cite{talagrand1993new}), we may write:
\begin{align}
    \lambda_{\mathcal{H}}(\epsilon K_{\gamma})) & \geq \lambda_{\mathcal{H}}\Big(\bigcap_{r \geq 1} \{\sup_{y \in S_r}|\langle h, \gamma (e^{\gamma}_{y} - e^{\gamma}_{\psi_{r - 1}(y)}) \rangle| \leq \epsilon_r\}, |\langle h, \gamma e^{\gamma}_{x_0} \rangle| \leq \frac{\epsilon}{2}\Big)  \nonumber \\
    & \geq (1 - 2\Phi(\epsilon(2\sqrt{\gamma})^{-1}))\prod_{r = 1}^{\infty} (1 - 2\Phi(2^{r-1}\epsilon_r))^{\mathcal{N}(\mathcal{X}, d_{\gamma}, 2^{-r})} \label{eq: Apply Max Variance} \\
    & \geq (1 - 2\Phi(\epsilon(2\sqrt{\gamma})^{-1}))\prod_{r = 1}^{\infty} (1 - 2\Phi(2^{r-1}\epsilon_r))^{K_{\alpha}\gamma^{\frac{\alpha(\alpha - 1)}{(\alpha - 2)}}2^{2\alpha r}} \label{eq: Apply Covering Number Bound}
\end{align}
for some $K_{\alpha} > 0$ depending only $\alpha$. Here, in \eqref{eq: Apply Max Variance} we have used $\text{Var}(\langle h, \gamma (e^{\gamma}_{y} - e^{\gamma}_{\psi_{r - 1}(y)})\rangle) = \gamma^2 \mathcal{E}(e^{\gamma}_{y} - e^{\gamma}_{\psi_{r - 1}(y)}, e^{\gamma}_{y} - e^{\gamma}_{\psi_{r - 1}(y)}) = d^2_{\gamma}(y, \psi_{r - 1}(y)) \leq 2^{2 - 2r}$ and $\text{Var}(\langle h, \gamma e^{\gamma}_{x_0}\rangle) = \mathcal{E}(\gamma e^{\gamma}_{x_0}, \gamma e^{\gamma}_{x_0}) = \gamma$ and in \eqref{eq: Apply Covering Number Bound} we have applied Lemma \ref{Modulus of Continuity} and volume doubling, which gives that:
\begin{equation*}
    \mathcal{N}(\mathcal{X}, d_{\gamma}, \epsilon) \preceq \gamma^{\frac{\alpha(\alpha - 1)}{(\alpha - 2)}}\epsilon^{-2\alpha}
\end{equation*}
Indeed, for $\delta > 0$, by the doubling property of $\mu$ there exists $m \geq \mathcal{N}(\mathcal{X}, d, \gamma^{\frac{1 - \alpha}{\alpha - 2}}\epsilon^2)$ such that $\Big\{B\Big(x_i, \gamma^{\frac{1 - \alpha}{\alpha - 2}}\delta^2\Big)\Big\}_{i = 1}^m$ covers $\mathcal{X}$ and$\Big\{B\Big(x_i, 0.2\gamma^{\frac{1 - \alpha}{\alpha - 2}}\delta^2\Big)\Big\}_{i = 1}^m$ are disjoint. Hence, we have that:
\begin{align}
    \mathcal{N}(\mathcal{X}, d_{\gamma}, \delta) & \preceq \mathcal{N}(\mathcal{X}, d, \gamma^{\frac{1 - \alpha}{\alpha - 2}}\delta^2)  \label{eq: Apply Distance Comparison} \\
    & \preceq \max_{i \in [m]} V(x_i, \gamma^{\frac{1 - \alpha}{\alpha - 2}}\delta^2)^{-1} \nonumber \\
    & \preceq \gamma^{\frac{\alpha(\alpha - 1)}{\alpha - 2}}\delta^{-2\alpha} \label{eq: Apply RVD}
 \end{align}
where \eqref{eq: Apply Distance Comparison} follows from Lemma \ref{Modulus of Continuity} and \eqref{eq: Apply RVD} follows from \eqref{eq: VD}. It remains to choose the sequence $\{\epsilon_r\}_{r \geq 0}$. Let $p \in \mathbb{N}$ be such that:
\begin{equation*}
    2^{-p} \leq \epsilon < 2^{-p + 1}
\end{equation*}
and choose $\epsilon_r = C2^{-p - \frac{|r-p|}{2}}$, where $C \in (0, 1)$ is an absolute constant chosen appropriately so $\sum_{r \geq 1} \epsilon_r < 2^{-p - 1} \leq \frac{\epsilon}{2}$. Then, we have:
\begin{align*}
    \lambda_{\mathcal{H}}(\epsilon K_{\gamma}) & \geq \lambda_{\mathcal{H}}(2^{-p} K_{\gamma}) \\
    & \geq (1 - 2\Phi(2^{-p}(2\sqrt{\gamma})^{-1})) \prod_{r = 1}^{\infty} (1 - 2\Phi(C2^{r-p - \frac{|r - p|}{2} - 1}))^{\gamma^{\frac{\alpha(\alpha - 1)}{(\alpha - 2)}}2^{2\alpha r}} 
\end{align*}
Noting the standard inequalities:
\begin{align*}
    1 - 2\Phi(a) \geq La \hspace{2mm} a \leq 2 \\
    1 - 2\Phi(a) \geq \text{exp}\Big(-2e^{-\frac{a^2}{2}}\Big)   \hspace{2mm} a \geq 1
\end{align*}
for an absolute constant $L > 0$ (see e.g. pg. 520 of \cite{talagrand1993new}). Hence, we obtain, for an absolute constant $C' = \frac{2\log 2 - 2\log C}{\log 2}$:
\begin{align*}
    \log \lambda_{\mathcal{H}}(K_{\gamma}(\epsilon)) & \succeq \log (L2^{-p-1}\gamma^{-\frac{1}{2}}) + \sum_{r = 1}^{p + C'} \gamma^{\frac{\alpha(\alpha - 1)}{(\alpha - 2)}}2^{2\alpha r}\log(L2^{-\frac{3|r - p|}{2}-1})  -2 \sum_{r > p + C'} \gamma^{\frac{\alpha(\alpha - 1)}{(\alpha - 2)}}2^{2\alpha r} e^{-C^22^{r - p - 2}} \\
    & \succeq \log (L2^{-p-1}\gamma^{-\frac{1}{2}}) + \gamma^{\frac{\alpha(\alpha - 1)}{(\alpha - 2)}}2^{2\alpha p}\Big(\sum_{r = 1}^{p + C'} 2^{2\alpha(r-p)}\log(L2^{-\frac{3|r - p|}{2} - 1}) -2 \sum_{r > p + C'} 2^{2\alpha(r-p)} e^{-C^22^{r - p - 2}}\Big) \\
    & \succeq C_{\alpha}\gamma^{\frac{\alpha(\alpha - 1)}{(\alpha - 2)}}2^{2\alpha p} + \log (L2^{-p-1}\gamma^{-\frac{1}{2}}) \\
    & \succeq C_{\alpha}\gamma^{\frac{\alpha(\alpha - 1)}{(\alpha - 2)}}2^{2\alpha p}
\end{align*}
for some constant $C_{\alpha} < 0$ depending only on $\alpha$ (as we suppose $\gamma > 0$ to be sufficiently large). Now, noting that $\epsilon < 2^{-p + 1}$, we have:
\begin{equation*}
    \log \lambda_{\mathcal{H}}(K_{\gamma}(\epsilon))  \geq C_{\alpha}\gamma^{\frac{\alpha(\alpha - 1)}{(\alpha - 2)}}\epsilon^{-2\alpha}
\end{equation*}
where we have redefined $C_{\alpha} < 0$ appropriately. Now, applying Theorem 1 in \cite{kuelbs1993metric} we obtain:
\begin{equation*}
    \log \mathcal{N}(\mathbb{B}(\mathcal{H}), K_{\gamma}(\epsilon)) \preceq \gamma^{\frac{\alpha(\alpha - 1)}{(\alpha + 1)(\alpha - 2)}}\epsilon^{-\frac{2\alpha}{\alpha + 1}}
\end{equation*}
where $\mathcal{N}(\mathbb{B}(\mathcal{H}), K_{\gamma}(\epsilon))$ denotes the covering number of the unit Dirichlet ball $\mathbb{B}(\mathcal{H})$ by copies of the convex body $K_{\gamma}(\epsilon)$. By the definition of $K_{\gamma}(\epsilon)$ and $\mathcal{H}_{\gamma}$, the latter result becomes:
\begin{equation*}
    \log \mathcal{N}(\epsilon, \mathcal{H}_{\gamma}, \|\cdot\|_{\infty}) \preceq \gamma^{\frac{\alpha(\alpha - 1)}{(\alpha + 1)(\alpha - 2)}}\epsilon^{-\frac{2\alpha}{\alpha + 1}}
\end{equation*}
\end{proof}

\begin{proof}[Proof of Theorem \ref{ERM Risk}]
Equipped with the metric entropy estimate from Proposition \ref{Covering Numbers}, we follow the standard procedure to derive an oracle inequality for the excess risk (see e.g \cite{koltchinskii2011oracle, bartlett2002localized}). Observe that the class $\mathcal{H}_{\gamma}$ is convex and fix:
\begin{equation*}
    \bar{f} \in \text{arg} \min_{f \in \mathcal{H}_{\gamma_n}} \|f - f^{*}\|_{L^2(\nu)}
\end{equation*}
Recall that $||f||_{\infty} \leq \sqrt{\gamma_n}$ for all $f \in \mathcal{H}_{\gamma_n}$ (by definition \eqref{eq: Reg Function Class} and Cauchy-Schwarz). Define the excess loss $\ell_{f}$ and loss classes $\mathcal{F}_{\gamma_n, r}$ and $\tilde{\mathcal{F}}_{\gamma_n, r}$:
\begin{align*}
    \ell_f(x) & = (f^{*}(x) - f(x))^2 - (f^{*}(x) - \bar{f}(x))^2 \hspace{4mm} \forall f \in \mathcal{H}_{\gamma_n}\\
    \mathcal{F}_{\gamma_n, r} & = \{\ell_f: f \in \mathcal{H}_{\gamma_n} \cap r\mathbb{B}(\bar{f}, L^2(\nu))\} \\
    \tilde{\mathcal{F}}_{\gamma_n, r} & = \{g_f(x, y) \equiv (y - f^{*}(x))(\bar{f}(x) - f(x)): f \in \mathcal{H}_{\gamma_n} \cap r\mathbb{B}(\bar{f}, L^2(\nu))\}
\end{align*}
Then, recalling $\epsilon = y - f^{*}(x)$, we may write:
\begin{align}
    (y - f(x))^2 - (y - \bar{f}(x))^2 & = 2\epsilon(\bar{f}(x) - f(x)) + (f^{*} - f(x))^2 - (f^{*} - \bar{f}(x))^2 \nonumber \\
    & = 2g_f(x, y) + \ell_{f}(x) \label{eq: Excess Loss}
\end{align}
Let $r^2_n = \sup \{\frac{1}{n}\sum_{i = 1}^n (f - \bar{f})^2(X_i): f \in \mathcal{H}_{\gamma_n} \cap r\mathbb{B}(\bar{f}, L^2(\nu))\}$ and $\tilde{r}^2_n = \sup \{\frac{1}{n}\sum_{i = 1}^n \epsilon^2_i(f - \bar{f})^2(X_i): f \in \mathcal{H}_{\gamma_n} \cap r\mathbb{B}(\bar{f}, L^2(\nu))\}$. Since $\epsilon \sim \text{SG}(\rho^2)$ and $||f||_{\infty} \leq \sqrt{\gamma_n}$ for all $f \in \mathcal{H}_{\gamma_n}$, it follows from Adamczak's version of Talagrand's inequality (Theorem 4 in \cite{adamczak2008tail}) and classical Dudley chaining, that with probability $1-\delta$, for all $f \in \mathcal{H}_{\gamma_n} \cap r\mathbb{B}(\bar{f}, L^2(\nu))$:
\begin{align}
    \frac{1}{n}\sum_{i = 1}^n \epsilon_i (\bar{f}(X_i) - f(X_i)) & \preceq \mathbb{E}\Big[\sup_{g \in \tilde{\mathcal{F}}_{\gamma_n, r}} \frac{1}{n}\sum_{i = 1}^n g(X_i, Y_i)\Big] + \sqrt{\frac{\rho^2 r^2 \log \Big(\frac{1}{\delta}\Big)}{n}} +  \frac{\log \Big(\frac{1}{\delta}\Big) \rho \log n \sqrt{\gamma_n}}{n} \nonumber \\
    & \preceq \mathbb{E}\Big[\frac{1}{\sqrt{n}}\int_{0}^{\tilde{r}_n} \sqrt{\log \mathcal{N}(s, \tilde{\mathcal{F}}_{\gamma_n, r}, \|\cdot\|_{L^2(\mathbb{P}_n)})}ds\Big] + \sqrt{\frac{\rho^2 r^2 \log \Big(\frac{1}{\delta}\Big)}{n}} +  \frac{\log \Big(\frac{1}{\delta}\Big) \rho \log n \sqrt{\gamma_n}}{n} \nonumber \\
    & \preceq \mathbb{E}\Big[\frac{\max_{i \in [n]} |\epsilon_i|}{\sqrt{n}}\int_{0}^{r_n} \sqrt{\log \mathcal{N}(s, \mathcal{H}_{\gamma_n}, \|\cdot\|_{\infty})}ds\Big] + \sqrt{\frac{\rho^2 r^2 \log \Big(\frac{1}{\delta}\Big)}{n}} \label{eq: Change of Variable} \\
    & \hspace{2mm} +  \frac{\log \Big(\frac{1}{\delta}\Big) \rho \log n \sqrt{\gamma_n}}{n}  \nonumber \\
    & \preceq \frac{\rho \sqrt{\log n}}{\sqrt{n}}\int_{0}^{\sqrt{\mathbb{E}[r_n^2]}} \sqrt{\log \mathcal{N}(s, \mathcal{H}_{\gamma_n}, \|\cdot\|_{\infty})}ds + \sqrt{\frac{\rho^2 r^2 \log \Big(\frac{1}{\delta}\Big)}{n}} +  \frac{\log \Big(\frac{1}{\delta}\Big) \rho \log n \sqrt{\gamma_n}}{n} \label{eq: Dudley Concavity} \\
    & \preceq \frac{\rho \sqrt{\log n} \gamma_n^{\frac{\alpha(\alpha - 1)}{2(\alpha + 1)(\alpha - 2)}}r^{\frac{1}{\alpha + 1}}}{\sqrt{n}} +\sqrt{\frac{\rho^2 r^2 \log \Big(\frac{1}{\delta}\Big)}{n}} +  \frac{\log \Big(\frac{1}{\delta}\Big) \rho \log n \sqrt{\gamma_n}}{n} \label{eq: Apply Metric Entropy} 
\end{align}
where \eqref{eq: Change of Variable} follows from observing $\tilde{r}_n \leq \max_{i \in [n]} |\epsilon_i| \cdot r_n$ and a change of variable, \eqref{eq: Dudley Concavity} follows from the independence of the subgaussian $\text{SG}(\rho^2)$ noise $\{\epsilon_i\}_{i = 1}^n$ and $\{X_i\}_{i = 1}^n$, and the concavity of the Dudley integral and \eqref{eq: Apply Metric Entropy} follows from Proposition \ref{Covering Numbers}. 
Moreover, for $f_1, f_2 \in \mathcal{H}_{\gamma_n}$, we have:
\begin{equation*}
    |\ell_{f_1}(x) - \ell_{f_2}(x)| = |(2f^{*}(x) - f_1(x) - f_2(x))(f_1(x) - f_2(x))| \leq 4\sqrt{\gamma_n}|f_1(x) - f_2(x)|
\end{equation*}
where we have again used the fact that $||f||_{\infty} \leq \sqrt{\gamma_n}$ for all $f \in \mathcal{H}_{\gamma_n}$ and assumed, w.l.o.g, that $\|f^{*}\|_{\infty} \leq \gamma_n$ (since $\gamma_n \to \infty$ as $n \to \infty$). Hence, by the contraction principle, we obtain the following estimate for the expected localized Rademacher complexity: 
\begin{align}
    \mathbb{E}[\mathcal{R}_n(\mathcal{F}_{\gamma_n, r})] & \preceq \sqrt{\gamma_n}\mathbb{E}[\mathcal{R}_n(\mathcal{H}_{\gamma_n} \cap r\mathbb{B}(\bar{f}, L^2(\nu)))] \nonumber \\
    & \preceq \sqrt{\frac{r^{\frac{2}{\alpha + 1}}\gamma_n^{\frac{2\alpha^2 - 2\alpha - 2}{(\alpha + 1)(\alpha - 2)}}}{n}}  \equiv \psi(r) \label{eq: Rademacher and Metric Entropy}
\end{align}
where \eqref{eq: Rademacher and Metric Entropy} again follows from a Dudley upper bound and Proposition \ref{Covering Numbers}. From \eqref{eq: Rademacher and Metric Entropy} and Bousquet's version of Talagrand's inequality \cite{bousquet2002bennett}, we have that:
\begin{equation}
\label{eq: Square Deviation}
    \sup_{f \in \mathcal{H}_{\gamma_n} \cap r\mathbb{B}(\bar{f}, L^2(\nu))} \mathbb{P}_n \ell_f  - P \ell_f \preceq \psi(r) + r\sqrt{\frac{\gamma_n\log \Big(\frac{1}{\delta}\Big)}{n}} + \frac{\gamma_n \log \Big(\frac{1}{\delta}\Big)}{n}
\end{equation}
with probability $1 - \delta$. Hence, from \eqref{eq: Excess Loss}, \eqref{eq: Apply Metric Entropy}, and \eqref{eq: Square Deviation}, we have, with probability $1 - 2\delta$, that for all $f \in \mathcal{H}_{\gamma_n} \cap r\mathbb{B}(\bar{f}, L^2(\nu))$:
\begin{align}
    (\mathbb{P}_n - P)((y - f(x))^2 - (y - \bar{f}(x))^2) & = (\mathbb{P}_n - P)(2\epsilon(\bar{f}(x) - f(x)) + \ell_{f}(x)) \nonumber \\
    & \preceq \psi(r) + \rho\sqrt{\frac{r^{\frac{2}{\alpha + 1}}\gamma_n^{\frac{\alpha(\alpha - 1)}{(\alpha + 1)(\alpha - 2)}}\log n }{n}} + \sqrt{\frac{r^2\gamma_n\log \Big(\frac{1}{\delta}\Big)}{n}} + \frac{\rho \gamma_n \log \Big(\frac{1}{\delta}\Big) \log n}{n} \nonumber \\
    & \preceq \psi(r) + r\sqrt{\frac{\gamma_n\log \Big(\frac{1}{\delta}\Big)}{n}} + \frac{\rho \gamma_n \log \Big(\frac{1}{\delta}\Big) \log n}{n} \label{eq: Order Precedence}
\end{align}
where \eqref{eq: Order Precedence} follows from the fact that:
\begin{align*}
    \rho\sqrt{\frac{r^{\frac{2}{\alpha + 1}}\gamma_n^{\frac{\alpha(\alpha - 1)}{(\alpha + 1)(\alpha - 2)}}\log n }{n}} & = \rho \sqrt{\log n}\gamma_n^{\frac{2-\alpha(\alpha - 1)}{2(\alpha + 1)(\alpha - 2)}} \cdot \frac{r^{\frac{1}{\alpha + 1}}\gamma_n^{\frac{2\alpha^2 - 2\alpha - 2}{2(\alpha + 1)(\alpha - 2)}}}{\sqrt{n}}  \\
    & = \rho \sqrt{\log n}\gamma_n^{\frac{2-\alpha(\alpha - 1)}{2(\alpha + 1)(\alpha - 2)}} \cdot \psi(r) \\
    & \preceq \psi(r)
\end{align*}
since $\gamma_n \asymp n^{\frac{\alpha - 2}{2\alpha}}$ and $\alpha \geq 3$. We observe that the solution $r^{*}$ of $\psi(r) \asymp r^2$ is given by:
\begin{equation*}
    r^{*} \asymp \sqrt{\frac{\gamma_n^{\frac{2\alpha^2 - 2\alpha - 2}{(2\alpha + 1)(\alpha - 2)}}}{n^{\frac{\alpha + 1}{2\alpha + 1}}}}
\end{equation*}
Then, by Theorem 5.1 in \cite{koltchinskii2011oracle} (see also Theorem 3.3 and 6.1 in \cite{bartlett2002localized}), we have that with probability $1 - 2\delta$:
\begin{equation}
\label{eq: Excess Risk Bound 1}
    \mathbb{E}_{P}[(Y - \hat{f}_D(X))^2] \preceq \mathbb{E}_{P}[(Y - \bar{f}(X))^2] + \frac{\gamma_n^{\frac{2\alpha^2 - 2\alpha - 2}{(2\alpha + 1)(\alpha - 2)}}}{n^{\frac{\alpha + 1}{2\alpha + 1}}} + \frac{\rho \gamma_n \log \Big(\frac{1}{\delta}\Big) \log n}{n}
\end{equation}
Let $f^{*}_{\gamma_n} \in \mathcal{H}_{\gamma_n}$ be given by $f^{*}_{\gamma_n}(x) = \mathcal{E}(f^{*}, \gamma_n e_{x}^{\gamma_n})$. Then, by definition of $\bar{f}$ we have that:
\begin{equation*}
    ||\bar{f} - f^{*}||^2_{L^2(\nu)} \leq ||f^{*}_{\gamma_n} - f^{*}||^2_{L^2(\nu)} \leq \gamma_n^{\frac{2}{2 - \alpha}}\mathcal{E}(f^{*}, f^{*})
\end{equation*}
by Theorem \ref{Capacitary Poincare}. Substituting back into \eqref{eq: Excess Risk Bound 1} and applying the independence of $Y - f^{*}(X)$ and $X$, we obtain:
\begin{equation}
\label{eq: Excess Risk Bound 2}
    \|\hat{f}_D - f^{*}\|^2_{L^2(\nu)} \preceq \gamma_n^{\frac{2}{2 - \alpha}}\mathcal{E}(f^{*}, f^{*}) + \frac{\gamma_n^{\frac{2\alpha^2 - 2\alpha - 2}{(2\alpha + 1)(\alpha - 2)}}}{n^{\frac{\alpha + 1}{2\alpha + 1}}} + \frac{\rho \gamma_n \log \Big(\frac{1}{\delta}\Big)\log n}{n}
\end{equation}
Substituting the choice of $\gamma_n \asymp n^{\frac{\alpha - 2}{2\alpha}}$ (which can also be viewed as optimizing over $\gamma_n$), we obtain our result. 
\end{proof}
\section{Auxiliary Estimates}
\label{Auxiliary Estimates}

\begin{lemma}
\label{Regularized Potential Norm}
\begin{equation*}
    \gamma^2_n||(C_{\nu} + \lambda)^{-\frac{1}{2}}e_{x, n}||^2_{\mathcal{H}} \leq \frac{\gamma_n}{\lambda}
\end{equation*}
\end{lemma}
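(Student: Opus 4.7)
The plan is to reduce the norm in question to $\mathcal{E}(e_{x,n},e_{x,n})$ via an elementary spectral bound on $(C_\nu + \lambda)^{-1}$, and then invoke the known capacity identity $\mathcal{E}(e_{x,n},e_{x,n}) = \mathrm{Cap}(\mathcal{O}_{x,n}) = \gamma_n^{-1}$.

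First I would rewrite the squared norm using the self-adjointness of $(C_\nu+\lambda)^{-1/2}$ on $(\mathcal{H},\mathcal{E})$:
\begin{equation*}
\|(C_\nu+\lambda)^{-1/2} e_{x,n}\|_{\mathcal{H}}^2 = \mathcal{E}\bigl((C_\nu+\lambda)^{-1/2} e_{x,n},\,(C_\nu+\lambda)^{-1/2} e_{x,n}\bigr) = \mathcal{E}\bigl((C_\nu+\lambda)^{-1} e_{x,n},\, e_{x,n}\bigr).
\end{equation*}
Since $C_\nu$ is a bounded, self-adjoint, positive operator on $(\mathcal{H},\mathcal{E})$ (by the defining identity \eqref{eq: Covariance Reproducing} with $m=\nu$, which gives $\mathcal{E}(C_\nu h,h) = \|h\|_{L^2(\nu)}^2 \geq 0$), the operator $C_\nu + \lambda$ satisfies $C_\nu + \lambda \succeq \lambda\, I$ in the order of self-adjoint operators on $\mathcal{H}$. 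Inverting reverses the inequality, so $(C_\nu+\lambda)^{-1} \preceq \lambda^{-1} I$, whence
\begin{equation*}
\mathcal{E}\bigl((C_\nu+\lambda)^{-1} e_{x,n},\, e_{x,n}\bigr) \leq \lambda^{-1}\, \mathcal{E}(e_{x,n}, e_{x,n}).
\end{equation*}

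Finally I would use the capacity identity. By the construction in Section \ref{Random Obstacle Preview}, $e_{x,n}$ is the equilibrium potential of $\mathcal{O}_{x,n}$ and hence minimizes $\mathcal{E}(g,g)$ over $\mathcal{L}(\mathcal{O}_{x,n})$, with $\mathcal{E}(e_{x,n}, e_{x,n}) = \mathrm{Cap}(\mathcal{O}_{x,n}) = \nu_{x,n}(\partial \mathcal{O}_{x,n}) = \gamma_n^{-1}$. Combining this with the previous display yields
\begin{equation*}
\gamma_n^2 \|(C_\nu+\lambda)^{-1/2} e_{x,n}\|_{\mathcal{H}}^2 \leq \gamma_n^2 \cdot \lambda^{-1} \cdot \gamma_n^{-1} = \frac{\gamma_n}{\lambda},
\end{equation*}
as required. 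There is no real obstacle here — the argument is purely spectral plus a direct application of the equilibrium identity — so the main thing to be careful about is the convention that $\|\cdot\|_{\mathcal{H}}^2 = \mathcal{E}(\cdot,\cdot)$ (cf.\ \eqref{eq: 0-norm}) and that $C_\nu$ acts as a positive self-adjoint operator in this inner product, both of which are established earlier in the paper.
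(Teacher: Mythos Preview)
Your proof is correct and matches the paper's approach exactly: the paper's one-line proof invokes the operator norm bound $\|(C_\nu+\lambda)^{-1/2}\|_{\mathcal{H}\to\mathcal{H}}^2 \le \lambda^{-1}$ (your spectral inequality) together with the capacity identity $\|e_{x,n}\|_{\mathcal{H}}^2 = \gamma_n^{-1}$.
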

\begin{proof}
This follows directly by applying Cauchy Schwarz and noting that $\|(C_{\nu} + \lambda)^{-\frac{1}{2}}\|^2_{\mathcal{H} \to \mathcal{H}} \leq \lambda^{-1}$ and $\|e_{x, n}\|^2_{\mathcal{H}} = \gamma^{-1}_n = \text{cap}(\mathcal{O}_{x, n})$
\end{proof}
\begin{lemma}
\label{Operator Approximation Error}
There exists a $K > 1$ (independent of $n$ and $\lambda$) such that:
\begin{equation*}
    ||(C_{\nu} + \lambda)^{-\frac{1}{2}}(\hat{C}_{\nu} - C_{\nu})(C_{\nu} + \lambda)^{-\frac{1}{2}}||_{\mathcal{H} \to \mathcal{H}} \leq \sqrt{\frac{K^2\gamma^{\frac{2\beta}{\beta - \alpha}}_n}{\lambda^2} + \frac{K\gamma_n^{\frac{\beta}{\beta - \alpha}}}{\lambda}}
\end{equation*}
as $n \to \infty$. 
\end{lemma}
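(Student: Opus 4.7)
The plan is to express the symmetric bilinear form $\mathcal{E}((C_\nu - \hat C_\nu)h, g)$ as an $L^2(\nu)$-type integral, bound it via the capacitary Poincar\'e estimates already established (Theorem \ref{Capacitary Poincare} and Lemma \ref{Average Projection}), and then specialize to $h, g$ of the form $(C_\nu + \lambda)^{-1/2} w$ to convert the bilinear bound into the desired operator-norm bound. Concretely, applying the reproducing identity \eqref{eq: Covariance Reproducing} with $m = \nu$ and the definitions \eqref{eq: Covariance Kernel}--\eqref{eq: Cutoff Covariance Kernel} together with Fubini, one gets $\mathcal{E}(C_\nu h, g) = \int h g\, d\nu$ and $\mathcal{E}(\hat C_\nu h, g) = \int P^{\gamma_n}_y h \cdot P^{\gamma_n}_y g\, d\nu(y)$, where $P^{\gamma_n}_y$ is the capacitary mean from \eqref{eq: Projection Definition}.

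I would then split the integrand asymmetrically as $h(y)g(y) - P^{\gamma_n}_y h \cdot P^{\gamma_n}_y g = \bigl(h(y) - P^{\gamma_n}_y h\bigr) g(y) + P^{\gamma_n}_y h \cdot \bigl(g(y) - P^{\gamma_n}_y g\bigr)$, and apply Cauchy--Schwarz in $L^2(\nu)$ to each piece. Theorem \ref{Capacitary Poincare} controls $\|h - P^{\gamma_n}_\cdot h\|^2_{L^2(\nu)}$ by $\gamma_n^{\beta/(\beta - \alpha)}\,\mathcal{E}(h,h)$ (and likewise for $g$), while Lemma \ref{Average Projection} controls $\|P^{\gamma_n}_\cdot h\|^2_{L^2(\nu)}$ by $\gamma_n^{\beta/(\beta - \alpha)}\,\mathcal{E}(h,h) + \|h\|^2_{L^2(\nu)}$. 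To pass to the operator norm, I would substitute $h = (C_\nu+\lambda)^{-1/2} w$, $g = (C_\nu+\lambda)^{-1/2} v$ with $\mathcal{E}(w,w) = \mathcal{E}(v,v) = 1$. Straightforward spectral calculus in the Hilbert space $(\mathcal{H},\mathcal{E})$ then gives $\mathcal{E}(h,h) \le \lambda^{-1}$ and $\|h\|^2_{L^2(\nu)} = \|C_\nu^{1/2}h\|^2_{\mathcal{H}} \le \|C_\nu^{1/2}(C_\nu+\lambda)^{-1/2}\|^2_{\mathcal{H}\to\mathcal{H}} \le 1$, and analogously for $g$. Combining these pieces yields $|\mathcal{E}((C_\nu - \hat C_\nu) h, g)| \preceq \sqrt{\gamma_n^{\beta/(\beta-\alpha)}/\lambda} + \sqrt{(\gamma_n^{\beta/(\beta-\alpha)}/\lambda + 1) \cdot \gamma_n^{\beta/(\beta-\alpha)}/\lambda}$, which, upon using $(\sqrt{a}+\sqrt{b})^2 \le 2(a+b)$ and collecting constants into a single $K$, is exactly the square root appearing in the statement.

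The argument is largely bookkeeping, so the only conceptually delicate point is the choice of the asymmetric split: applying Theorem \ref{Capacitary Poincare} symmetrically to both factors would force an extra $\mathcal{E}(g,g) \preceq \lambda^{-1}$ on one side and produce a strictly worse $\gamma_n^{2\beta/(\beta-\alpha)}/\lambda^2$ leading term. The asymmetry, combined with using the $L^2(\nu)$-bound $\|g\|_{L^2(\nu)} \le 1$ (rather than the $\mathcal{H}$-bound $\mathcal{E}(g,g) \le \lambda^{-1}$) on one factor, is what produces the additive structure inside the square root and recovers the correct $\gamma_n^{\beta/(\beta-\alpha)}/\lambda$ leading behavior when $\gamma_n^{\beta/(\beta-\alpha)} \ll \lambda$ — precisely the regime enforced by the choice of $\gamma_n$ and $\lambda$ in Theorem \ref{Main Theorem}.
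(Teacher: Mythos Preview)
Your proposal is correct and is essentially the paper's own argument: both compute the bilinear form $\mathcal{E}(g,(\hat C_\nu-C_\nu)h)$ as an $L^2(\nu)$ integral, factor the integrand, apply Cauchy--Schwarz, and invoke Theorem \ref{Capacitary Poincare} and Lemma \ref{Average Projection} before specializing to $h=(C_\nu+\lambda)^{-1/2}g$ and using $\|C_\nu^{1/2}(C_\nu+\lambda)^{-1/2}\|_{\mathcal{H}\to\mathcal{H}}\le 1$. The only cosmetic difference is that the paper exploits self-adjointness to work directly with the quadratic form $\mathcal{E}(h,(\hat C_\nu-C_\nu)h)=\int((P_x^{\gamma_n}h)^2-h^2)\,d\nu$ and the difference-of-squares factorization $(P_x^{\gamma_n}h-h)(P_x^{\gamma_n}h+h)$, whereas you treat the full bilinear form with the telescoping split; your remark that the ``asymmetric split'' is delicate is therefore slightly overstated, since the symmetric quadratic-form route with this factorization achieves the same bound.
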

\begin{proof}
Let $h \in \mathcal{H}$. Then, we have:
\begin{align}
    \mathcal{E}(h, (\hat{C}_{\nu} - C_{\nu})h) & = \int_{\mathcal{X}} ((P^{\gamma_n}_{x}h)^2 - h^2(x))d\nu(x) \nonumber \\
    & = \sqrt{\Big(\int_{\mathcal{X}} (P^{\gamma_n}_{x}h +  h(x))^2 d\nu(x)\Big)\Big(\int_{\mathcal{X}} (P^{\gamma_n}_{x}h -  h(x))^2 d\nu(x)\Big)} \label{eq: Apply Cauchy-Schwarz}\\ 
    & \preceq \sqrt{(\gamma_n^{\frac{\beta}{\beta - \alpha}}\mathcal{E}(h, h) + ||h||^2_{L^2(\nu)})(\gamma_n^{\frac{\beta}{\beta - \alpha}}\mathcal{E}(h, h))} \label{eq: Apply Capacitary Lemmas}
\end{align}
where in \eqref{eq: Apply Cauchy-Schwarz} we have applied the Cauchy-Schwarz inequality, and in \eqref{eq: Apply Capacitary Lemmas} we have applied Theorem \ref{Capacitary Poincare} and Lemma \ref{Average Projection}. Now, setting $h = (C_{\nu} + \lambda)^{-\frac{1}{2}}g$ for $g \in \mathcal{H}$, and noting that:
\begin{equation*}
    ||(C_{\nu} + \lambda)^{-\frac{1}{2}}g||^2_{L^2(\nu)} = ||C^{\frac{1}{2}}_{\nu}(C_{\nu} + \lambda)^{-\frac{1}{2}}g||^2_{\mathcal{H}} \leq ||g||^2_{\mathcal{H}}
\end{equation*}
we obtain our result. 
\end{proof}

\begin{lemma}
\label{Sampling Approximation Error}
For $h \in \mathcal{H} \cap L^2(\nu)$
\begin{equation*}
    ||(C_{\nu} + \lambda)^{-\frac{1}{2}}(\hat{C}_{\nu} - C_{\nu})h||_{\mathcal{H}} \preceq \sqrt{\Big(\frac{\gamma_n^{\frac{\beta}{\beta - \alpha}}}{\lambda} \vee 1\Big)\gamma_n^{\frac{\beta}{\beta - \alpha}}||h||^2_{\mathcal{H}} + \frac{\gamma_n^{\frac{\beta}{\beta - \alpha}}}{\lambda} ||h||^2_{L^2(\mathcal{X}, \nu)}}
\end{equation*}
as $n \to \infty$
\end{lemma}

\begin{proof}
We proceed similarly to the proof of Lemma \ref{Operator Approximation Error}. First, we observe that, for any $g \in \mathcal{H}$:

\begin{align}
    \mathcal{E}(g, (\hat{C}_{\nu} - C_{\nu})h) & = \int_{\mathcal{X}} [g(x)(P^{\gamma_n}_{x}h  - h(x)) + P^{\gamma_n}_{x}h(P^{\gamma_n}_{x}g  - g(x))]d\nu(x) \nonumber \\
    & \leq \sqrt{\int_{\mathcal{X}} g^2(x) d\nu(x)\int_{\mathcal{X}} (P^{\gamma_n}_{x}h -  h(x))^2 d\nu(x)} + \sqrt{\int_{\mathcal{X}} (P^{\gamma_n}_{x}h)^2 d\nu(x)\int_{\mathcal{X}} (P^{\gamma_n}_{x}g -  g(x))^2 d\nu(x)} \label{eq: Apply Cauchy-Schwarz 2}\\ 
    & \preceq ||g||_{L^2(\mathcal{X}, \nu)}\sqrt{\gamma_n^{\frac{\beta}{\beta - \alpha}} \mathcal{E}(h, h)} + \sqrt{\gamma_n^{\frac{\beta}{\beta - \alpha}} \mathcal{E}(g, g)\Big(\gamma_n^{\frac{\beta}{\beta - \alpha}} \mathcal{E}(h, h) + ||h||^2_{L^2(\mathcal{X}, \nu)}\Big)} 
    \label{eq: Apply Capacitary Lemmas 2}
\end{align}
where again \eqref{eq: Apply Cauchy-Schwarz 2} follows from Cauchy-Schwarz and \eqref{eq: Apply Capacitary Lemmas 2} follows from the application of Theorem \ref{Capacitary Poincare} and Lemma \ref{Average Projection}. Writing $g = (C_{\nu} + \lambda)^{-\frac{1}{2}}\tilde{g}$ for some $\tilde{g} \in \mathcal{H}$ with $\|\tilde{g}\|_{\mathcal{H}} = 1$, recalling that $\mathcal{E}(h, h) = \|\cdot\|^2_{\mathcal{H}}$, and $\gamma^{-1}_n, \lambda \to 0$ as $n \to \infty$, we obtain our result. 
\end{proof}

\begin{lemma}
\label{Average Regularized Projection}
For any $h \in \mathcal{H}$
\begin{equation*}
   \Big\|\int \gamma_n(C_{\nu} + \lambda)^{-\frac{1}{2}}e_{x, n}(h(x) - P_{x}^{\gamma_n}h)d\nu(x)\Big\|_{\mathcal{H}} \preceq ||h||_{\mathcal{H}}\sqrt{\frac{\gamma_n^{\frac{2\beta}{\beta - \alpha}}}{\lambda} + \gamma_n^{\frac{\beta}{\beta - \alpha}}}
\end{equation*}
as $n \to \infty$
\end{lemma}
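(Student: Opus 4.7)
The plan is to bound the $\mathcal{H}$-norm of the Bochner integral by duality, testing against an arbitrary $g \in \mathcal{H}$ with $\|g\|_{\mathcal{H}} = 1$, and then to recognize the resulting scalar as an $L^2(\nu)$-inner product that directly invokes Theorem \ref{Capacitary Poincare} and Lemma \ref{Average Projection}.

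Specifically, fix such a test vector $g$ and set $\tilde g = (C_{\nu}+\lambda)^{-1/2}g$. Using the defining identity $P^{\gamma_n}_x \tilde g = \mathcal{E}(\tilde g, \gamma_n e_{x,n})$ and the self-adjointness of $(C_\nu+\lambda)^{-1/2}$, one computes
\begin{equation*}
\mathcal{E}\Big(g,\int \gamma_n(C_{\nu}+\lambda)^{-\frac{1}{2}} e_{x,n}\bigl(h(x)-P^{\gamma_n}_x h\bigr)\,d\nu(x)\Big)
= \int P^{\gamma_n}_x \tilde g\,\bigl(h(x)-P^{\gamma_n}_x h\bigr)\,d\nu(x).
\end{equation*}
Applying Cauchy--Schwarz in $L^2(\nu)$ splits this into $\sqrt{\int (P^{\gamma_n}_x\tilde g)^2 d\nu}\cdot \sqrt{\int (h(x)-P^{\gamma_n}_x h)^2 d\nu}$. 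The first factor is controlled by Lemma \ref{Average Projection} by $\gamma_n^{\beta/(\beta-\alpha)}\mathcal{E}(\tilde g,\tilde g)+\|\tilde g\|_{L^2(\nu)}^2$, while the second is exactly the quantity bounded by Theorem \ref{Capacitary Poincare} by $\gamma_n^{\beta/(\beta-\alpha)}\mathcal{E}(h,h)$.

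The two operator-norm estimates needed to finish are routine: $\mathcal{E}(\tilde g,\tilde g)=\|(C_{\nu}+\lambda)^{-1/2}g\|_{\mathcal{H}}^2\le \lambda^{-1}\|g\|_{\mathcal{H}}^2=\lambda^{-1}$ from the spectral calculus, and $\|\tilde g\|_{L^2(\nu)}^2 = \mathcal{E}(C_\nu^{1/2}(C_\nu+\lambda)^{-1/2}g,C_\nu^{1/2}(C_\nu+\lambda)^{-1/2}g) \leq \|g\|_{\mathcal{H}}^2 = 1$ from \eqref{eq: Covariance Reproducing}. Combining,
\begin{equation*}
\Big|\mathcal{E}\Big(g,\int \cdots\Big)\Big|
\preceq \sqrt{\tfrac{\gamma_n^{\beta/(\beta-\alpha)}}{\lambda}+1}\cdot \sqrt{\gamma_n^{\beta/(\beta-\alpha)}}\,\|h\|_{\mathcal{H}}
=\|h\|_{\mathcal{H}}\sqrt{\tfrac{\gamma_n^{2\beta/(\beta-\alpha)}}{\lambda}+\gamma_n^{\beta/(\beta-\alpha)}}.
\end{equation*}
Taking the supremum over $g$ in the unit ball of $\mathcal{H}$ yields the claimed bound.

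There is no real obstacle: the argument is a clean application of duality plus the two capacitary Poincar\'e-type estimates already established. The only mild care is in verifying that Lemma \ref{Average Projection} applies to $\tilde g$ (which it does since $\tilde g \in \mathcal{H}$ whenever $g \in \mathcal{H}$) and that the Bochner integral defining the left-hand side is well defined --- this follows because $x\mapsto \gamma_n(C_{\nu}+\lambda)^{-1/2}e_{x,n}(h(x)-P^{\gamma_n}_x h)$ is weakly measurable with $\|\gamma_n (C_\nu+\lambda)^{-1/2}e_{x,n}\|_{\mathcal{H}}\preceq\sqrt{\gamma_n/\lambda}$ uniformly in $x$ by Lemma \ref{Regularized Potential Norm}, and $h(x)-P^{\gamma_n}_x h \in L^2(\nu)$ by Theorem \ref{Capacitary Poincare}.
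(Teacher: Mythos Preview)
Your proof is correct and follows essentially the same approach as the paper: both proceed by duality, reduce the pairing to $\int P^{\gamma_n}_x(\cdot)\,(h(x)-P^{\gamma_n}_x h)\,d\nu$, apply Cauchy--Schwarz, and invoke Theorem~\ref{Capacitary Poincare} and Lemma~\ref{Average Projection} together with the elementary bounds $\|(C_\nu+\lambda)^{-1/2}\|_{\mathcal{H}\to\mathcal{H}}\le\lambda^{-1/2}$ and $\|C_\nu^{1/2}(C_\nu+\lambda)^{-1/2}\|_{\mathcal{H}\to\mathcal{H}}\le 1$. The only cosmetic difference is the naming convention (you call the unit test vector $g$ and its preimage $\tilde g$, while the paper does the reverse), and you add a brief justification of the Bochner integrability that the paper leaves implicit.
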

\begin{proof}
Note for any $g \in \mathcal{H}$, we have that:
\begin{align}
    \mathcal{E}\Big(g, \int \gamma_n e_{x, n} (h(x) - P_{x}^{\gamma_n}h) d\nu(x)\Big) 
    & = \int P_{x}^{\gamma_n}g (h(x) - P_{x}^{\gamma_n}h) d\nu(x) \nonumber \\
    & \leq \sqrt{\Big(\int (P_{x}^{\gamma_n}g)^2 d\nu(x)\Big) \Big(\int (h(x) - P_{x}^{\gamma_n}h)^2 d\nu(x)\Big)} \nonumber \\
    & \leq \sqrt{\gamma_n^{\frac{\beta}{\beta - \alpha}} \mathcal{E}(h, h)\Big(\gamma_n^{\frac{\beta}{\beta - \alpha}} \mathcal{E}(g, g) + ||g||^2_{L^2(\mathcal{X}, \nu)}\Big)} \label{eq: Apply Capacitary Lemmas 3}
\end{align}
where in \eqref{eq: Apply Capacitary Lemmas 3} we have again applied Theorem \ref{Capacitary Poincare} and Lemma \ref{Average Projection}. Now, writing $g = (C_{\nu} + \lambda)^{-\frac{1}{2}}\tilde{g}$ for some $\tilde{g} \in \mathcal{H}$ with $||g||_{\mathcal{H}} = 1$, we obtain, as in Lemma \ref{Sampling Approximation Error}, our result. 
\end{proof}


\bibliographystyle{imsart-number} 
\bibliography{ref}       


\end{document}